\newcommand{\N}{\mathbb{N}}
\newcommand{\R}{\mathbb{R}}
\newcommand{\C}{\mathbb{C}}
\newcommand{\K}{\mathbb{K}}
\newcommand{\calM}{\mathcal{M}}
\newcommand{\calN}{\mathcal{N}}
\newcommand{\calH}{\mathcal{H}}
\newcommand{\calS}{\mathcal{S}}
\newcommand{\calB}{\mathcal{B}}
\newcommand{\calK}{\mathcal{K}}
\DeclareMathOperator{\dist}{dist}
\DeclareMathOperator{\cl}{cl}
\newcommand{\setsep}{\;;\;}
\DeclareMathOperator{\diag}{diag} 
\DeclareMathOperator{\ecc}{ecc} 
\DeclareMathOperator{\tr}{tr}
\newcommand{\dd}{\mathrm{d}}
\newcommand{\iu}{\mathrm{i}}
\newcommand{\norm}[1]{\left\lVert#1\right\rVert} 
\newcommand{\abs}[1]{\left\lvert#1\right\rvert} 
\providecommand{\keywords}[1]{\textit{Keywords:} #1}
\providecommand{\subjclass}[1]{\textit{MSC2020:} #1}
\newtheorem{thm}{Theorem}
\newtheorem{lem}[thm]{Lemma}
\newtheorem{defn}[thm]{Definition}
\newtheorem{prop}[thm]{Proposition}
\newtheorem{example}[thm]{Example}
\newtheorem{cor}[thm]{Corollary}
\newtheorem{rem}[thm]{Remark}
\newtheorem{con}[thm]{Conjecture}
\newcolumntype{L}[1]{>{\raggedright\arraybackslash}p{#1}} 
\newcolumntype{C}[1]{>{\centering\arraybackslash}p{#1}}
\newcolumntype{R}[1]{>{\raggedleft\arraybackslash}p{#1}}
\title{The joint spectral radius is pointwise Hölder continuous}
\author{Jeremias Epperlein\footnote{jeremias.epperlein@uni-passau.de} }
\author{Fabian Wirth\footnote{fabian.lastname@uni-passau.de}}
\affil{University of Passau, Department of Computer Science and
  Mathematics, 94032 Passau, Germany}
\begin{document}
\maketitle
\begin{abstract}
    We show that the joint spectral radius is pointwise Hölder continuous. In addition, the joint spectral radius is locally Hölder continuous for $\varepsilon$-inflations. In the two-dimensional case, local Hölder continuity holds on the matrix sets with positive joint spectral radius. 
\end{abstract}

\keywords{joint spectral radius, Hölder continuity, extremal norm}

\subjclass{15A18,  47A30, 47A55, 26B35}
\section{Introduction}
\label{sec:intro}

The joint spectral radius of a set of matrices was introduced by Rota and Strang in \cite{RotaStra60}. For the last 20 years it has received considerable attention and a considerable body of results has been obtained in this area, see the monograph \cite{Jungers}, the milestone papers \cite{BergWang92,Gurv95,lagariasFinitenessConjectureGeneralized1995,bousch2002asymptotic, hareExplicitCounterexampleLagarias2011}, the surveys \cite{margaliot2006stability,shorten2007stability} and references therein.

One of the earliest questions concerned the regularity of the joint spectral radius as a function of the data. Continuity of the joint spectral radius was shown in \cite{Bara88}. Local Lipschitz continuity on the set of irreducible compact subsets of $\K^{d\times d},\K=\R,\C,$ was obtained in \cite{Wirt02}. A different proof which also yields concrete estimates for the Lipschitz constants is due to Kozyakin, \cite{KOZYAKIN201012}. The property was extended to the set of positive inclusions with a strongly connected system graph in \cite{mason2014extremal}.
The question remained whether the joint spectral radius is more regular than merely continuous on the space $\mathcal{H}(d)$ of all compact subsets of $\K^{d\times d}$ (endowed with the Hausdorff metric). A natural candidate for this type of regularity is Hölder continuity, as it is well known that the spectral radius as a function of a single matrix is locally $\frac{1}{d}$-Hölder on $\K^{d\times d}$. In this paper, we present a collection of results towards the goal of obtaining similar statements for the joint spectral radius as a function on the Hausdorff metric space of compact subsets of $\C^{d\times d}$. Jean-Michel Coron first suggested to the second author during a private conversation in 2001 that the joint spectral radius should be
locally Hölder continuous of degree $\frac{1}{d}$.
Whether or not this is true, is still unknown to us. However, we collect a number of results that suggest that this statement is close to the truth. We will show that
\begin{enumerate}
    \item the joint spectral radius is pointwise Hölder continuous, and for finite matrix sets the constant is $\frac{1}{d+\varepsilon}$, for any $\varepsilon>0$; 
    \item if $d=2$ the joint spectral radius is locally Hölder continuous of order $\frac{1}{6}$ on the set of compact matrix sets with nonzero joint spectral radius,
    \item for arbitrary $d$, the joint spectral radius is locally Hölder continuous of order $\frac{1}{d+1}$ when restricted to $\varepsilon$-inflations of a set $\calM$ by norm balls in a linear subspace of $\C^{d\times d}$.
\end{enumerate}

While the joint spectral radius describes 
the maximal growth rate of products from a set of matrices,
the top Ljapunov exponent describes the average growth
rate of a set of matrices sampled according to some
probability measure.
There has been a resurgence of activity regarding 
the continuity properties of these Lyapunov exponents.
While the continuous dependence of the Lyapunov exponents
of a set of invertible matrices
on the measure (when endowed with
the right topology taking into account the support of the measures)
in the irreducible case was already shown in the classical paper
of Furstenberg and Kifer \cite{furstenbergRandomMatrixProducts1983e} and independently Hennion \cite{hennionLoiGrandsNombres1984},
it took until 2023 for 
Avila, Eskin and Viana \cite{avilaContinuityLyapunovExponents2023}
to also settle the reducible case.
For stronger regularity properties of these Lyapunov exponents 
in various special cases see e.g.
\cite{peresDomainsAnalyticContinuation1992, hennionDerivabiliteGrandExposant1991,
duarteHolderContinuityLyapunov2022,
bezerraUpperBoundRegularity2023}.

The paper is organized as follows. In the following Section~\ref{sec:problemstatement} we present a formal problem statement and we compare our results to the regularity properties we conjecture for the joint spectral radius. In Section~\ref{sec:preliminaries} we recall some fundamental inequalities we require, recall definitions concerning Hölder continuity and discuss some related properties. Next we discuss some properties of reducible matrix sets and norms that are associated to the reducibility structure in Section~\ref{sec:norms}.
Section~\ref{sec:increasing} treats $\varepsilon$-inflation as a first particular case. By $\varepsilon$-inflation we mean the addition of $\varepsilon$-balls (in the Minkowski sense) to a given compact set of matrices $\calM$ and the dependence of the joint spectral radius on $\varepsilon$. We show that this dependence is pointwise Hölder continuous with an exponent depending on the irreducibility index of $\calM$ and locally Hölder continuous with a slightly worse exponent. This result is used in Section~\ref{sec:pointwise}, where we show that the joint spectral radius is pointwise Hölder continuous. 
Interestingly, the upper bound for the Hölder continuity is the
optimal $\frac{1}{d}$ that we obtain from the $\varepsilon$-inflation
result. The lower bound on the other hand turns out to be much more intricate than in the case of the ordinary spectral radius. In \Cref{prop:specrad-lowerbound} we point out that for the spectral radius standard perturbation theory yields linear lower bounds. In the general case, we are only able to show the Hölder constant $\alpha=\frac{1}{d^2+d}$. Interestingly, a result by Morris, \cite{morris2010rapidly}, by which 
the spectral radii of finite products approximate the joint spectral at fast rates, can be used to obtain improved - almost optimal - Hölder constants. But Morris' result is only applicable to finite matrix sets. 
In Section~\ref{sec:two-dim} we discuss local Hölder continuity. Here our results are much less complete. The methods we use allow only for a proof on the set of compact matrices with positive joint spectral radius.
We conjecture that on this set there are locally uniform bounds for the exponential-polynomial growth of trajectories. We show that under this condition the joint spectral radius is locally Hölder continuous on the set of compact matrix sets with strictly positive joint spectral radius.
In the two-dimensional case this conjecture can be verified. In Section~\ref{sec:continuous} we briefly state the consequences of our results for the continuous-time version of the joint spectral radius. Here it is automatic that the exponential growth rate corresponding to a compact set of matrices is positive, which leads to slightly more satisfying statements. We conclude in Section~\ref{sec:conclusion}.

\section{Problem Statement}
\label{sec:problemstatement}

Let $\N$ be the set of natural numbers including $0$. The real and complex field are denoted by $\R$, $\C$ and we set $\R_{\geq0}:=[0,\infty)$. The Euclidean norm on $\R^d, \C^d$ is denoted by $\norm{\cdot}_2$ and this also denotes the induced operator norm, i.e., the spectral norm on $\R^{d\times d},\C^{d\times d}$. In the following, $\norm{\cdot}$ denotes an arbitrary fixed vector norm as well as the induced operator norm.

Let $\K=\R,\C$, $d\geq 1$. For a bounded, nonempty set of matrices $\mathcal{M}\subseteq \K^{d\times d}$ we consider the set of arbitrary products of length $k$ defined by 
\begin{equation*}
    {\calS}_k(\calM):= \{ A_{k-1} \cdots A_{0} \setsep A_j \in {\calM}, j=0,\ldots,k-1 \} \,.
\end{equation*}
The joint spectral radius of $\mathcal{M}$ is defined as 
\begin{equation}
\label{eq:jsr}
    \rho(\mathcal{M)} := \lim_{k\to \infty} \sup \{ \norm{S} \setsep S\in {\calS}_k(\calM) \}^{1/k},
\end{equation}
and by the Berger-Wang theorem, \cite{BergWang92}, \cite{Elsn95} it is known that the quantity may be equivalently expressed as
\begin{equation}
\label{eq:Berg-Wang-jsr}
    \rho(\mathcal{M)} := \limsup_{k\to \infty} \sup \{ \rho(S) \setsep S\in {\calS}_k(\mathcal{M}) \}^{1/k},
\end{equation}
where (with slight abuse of notation) $\rho(A)$ for a matrix $A\in\K^{d\times d}$ denotes the standard spectral radius.
It is known that taking the closure of $\mathcal{M}$ does not change the value of the joint spectral radius. Thus we will assume that $\mathcal{M}$ is compact from now on.
With this convention $\rho$ is a map defined on
 $\mathcal{H}(d)$, the metric space of compact subsets of $\K^{d\times d}$ endowed with the Hausdorff metric. Recall that this metric is defined by
 \begin{equation}
     d(\calM,\calN) := \max \left\{ 
     \max \left\{ d(A,\calN) \setsep A \in \calM \right\}, 
     \max \left\{ d(B,\calM) \setsep B \in \calN \right\}
     \right\},
 \end{equation}
 where $d(A,\calN) := \min \{ \norm{A-B}_2 \setsep B \in \calN\}$ is the standard distance of a point to a set.
The space $\calH(d)$ is locally compact, separable and complete.
 
 For the topological subspace 
\begin{equation}
    \mathcal{H}_I(d) := \{ \mathcal{M}\in \mathcal{H}(d) \setsep \mathcal{M} \text{ is irreducible }\}
\end{equation}
it is known that
\begin{equation}
    \rho : \mathcal{H}_I(d) \to \R_{\geq0}, \quad \mathcal{M}\mapsto \rho(\mathcal{M})
\end{equation}
is locally Lipschitz continuous, \cite{Wirt02}. In this paper we investigate further regularity properties of the map
\begin{equation}
    \rho : \mathcal{H}(d) \to \R_{\geq0}, \quad \mathcal{M}\mapsto \rho(\mathcal{M}).
\end{equation}

For the spectral radius $\rho: \C^{d \times d} \to \R_{\geq0}$ the regularity theory in the large is well understood. A
theorem of Elsner (see \cite[Theorem IV.1.3]{stewartMatrixPerturbationTheory1990}) shows
 that the spectral radius on $\C^{d\times d}$
is locally Hölder continuous of order $1/d$.
\begin{thm}[Elsner]
    \label{thm:elsner}
  For $A,B \in \C^{d \times d}$ we have
  \begin{align*}
    \abs{\rho(A) - \rho(B)} \leq
   (\norm{A}_2+\norm{B}_2)^{(d-1)/d} \norm{A-B}_2^{1/d}.
  \end{align*}
\end{thm}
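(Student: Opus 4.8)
The plan is to carry out the classical determinant-comparison argument. Since the right-hand side is symmetric in $A$ and $B$, one may assume without loss of generality that $\rho(A)\ge\rho(B)$, so that it suffices to bound $\rho(A)-\rho(B)$ from above. Fix an eigenvalue $\lambda$ of $A$ with $\abs{\lambda}=\rho(A)$; then $\lambda I-A$ is singular. The whole proof consists in squeezing the scalar quantity $\abs{\det(\lambda I-B)}$ between a lower bound governed by $\rho(A)-\rho(B)$ and an upper bound governed by $\norm{A}_2+\norm{B}_2$ and $\norm{A-B}_2$.

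For the lower bound, write $\det(\lambda I-B)=\prod_{i=1}^{d}(\lambda-\mu_i)$, where $\mu_1,\dots,\mu_d$ are the eigenvalues of $B$ listed with multiplicity. Since $\abs{\mu_i}\le\rho(B)\le\rho(A)=\abs{\lambda}$ for each $i$, every factor satisfies $\abs{\lambda-\mu_i}\ge\abs{\lambda}-\abs{\mu_i}\ge\rho(A)-\rho(B)\ge 0$, whence $\abs{\det(\lambda I-B)}\ge(\rho(A)-\rho(B))^d$.

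For the upper bound, recall that $\abs{\det M}=\prod_{i=1}^{d}\sigma_i(M)$ for any $M\in\C^{d\times d}$, where $\sigma_1(M)\ge\dots\ge\sigma_d(M)$ are the singular values, and use Weyl's perturbation inequality $\sigma_i(X+Y)\le\sigma_i(X)+\sigma_1(Y)$. Applying this to $\lambda I-B=(\lambda I-A)+(A-B)$ with $i=d$ and using $\sigma_d(\lambda I-A)=0$ gives $\sigma_d(\lambda I-B)\le\norm{A-B}_2$, while for $i<d$ we crudely estimate $\sigma_i(\lambda I-B)\le\norm{\lambda I-B}_2\le\abs{\lambda}+\norm{B}_2=\rho(A)+\norm{B}_2\le\norm{A}_2+\norm{B}_2$. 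Multiplying all $d$ singular values yields $\abs{\det(\lambda I-B)}\le(\norm{A}_2+\norm{B}_2)^{d-1}\norm{A-B}_2$. Combining the two estimates,
\[
(\rho(A)-\rho(B))^d\le\abs{\det(\lambda I-B)}\le(\norm{A}_2+\norm{B}_2)^{d-1}\,\norm{A-B}_2,
\]
and taking $d$-th roots gives $\rho(A)-\rho(B)\le(\norm{A}_2+\norm{B}_2)^{(d-1)/d}\norm{A-B}_2^{1/d}$; interchanging $A$ and $B$ yields the same bound for $\rho(B)-\rho(A)$, which is the assertion. The argument is otherwise routine; the one point needing care is the asymmetric handling of the singular values of $\lambda I-B$ — exploiting the singularity of $\lambda I-A$ only for the smallest one, and bounding the remaining ones by the operator norm while invoking $\rho(A)\le\norm{A}_2$ on $\lambda I-B$ rather than on $\lambda I-A$ — since this is precisely what produces the factor $\norm{A}_2+\norm{B}_2$ and the exponents $(d-1)/d$ and $1/d$.
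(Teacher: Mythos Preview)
Your proof is correct and is essentially the classical argument due to Elsner. The paper itself does not prove this result; it merely states it and refers to \cite[Theorem~IV.1.3]{stewartMatrixPerturbationTheory1990} for a proof, so there is no ``paper's own proof'' to compare against beyond that citation. Your determinant-comparison argument---bounding $\abs{\det(\lambda I-B)}$ below via the eigenvalue factorisation and above by exploiting $\sigma_d(\lambda I-A)=0$ together with the crude operator-norm bound on the remaining singular values---is precisely the standard route, and all the steps are clean.
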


Maybe less familiar is the following locally Lipschitz lower bound for the spectral radius, which actually follows from classic results of analytic perturbation theory. While this is
probably well-known to perturbation theorists,
we were not able to find an explicit or implicit mention of this result in the literature.

\begin{prop}
\label{prop:specrad-lowerbound}
  Let $A \in \C^{d \times d}$.
  Then there are constants $\Gamma, \varepsilon_0 >0$ such that 
  \begin{align*}
    \rho(A + \varepsilon B) \geq \rho(A) - \Gamma \varepsilon
  \end{align*}
  for all $B \in \C^{d \times d}, \norm{B}_2 \leq 1$ and $\varepsilon \in (0,\varepsilon_0)$.
\end{prop}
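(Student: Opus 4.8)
The plan is to use analytic perturbation theory for eigenvalues, specifically the fact that the eigenvalues of a matrix depending analytically on a single parameter can be organized into branches given by Puiseux series. Fix $A \in \C^{d\times d}$ and let $\lambda_1,\dots,\lambda_m$ be its distinct eigenvalues, with $\abs{\lambda_1} = \rho(A)$ (if $\rho(A) = 0$ the statement is vacuous, since the right-hand side is then nonpositive, so assume $\rho(A) > 0$). Consider a fixed $B$ with $\norm{B}_2 \le 1$ and form the one-parameter analytic family $t \mapsto A + tB$ for $t \in \C$ near $0$. By Kato's perturbation theory (see \cite{stewartMatrixPerturbationTheory1990} or Kato's book), there is a punctured disk around $t = 0$ on which the eigenvalues of $A + tB$ are given by finitely many holomorphic functions of a fractional power $t^{1/p}$; in particular each eigenvalue branch $\lambda(t)$ emanating from $\lambda_1$ satisfies $\lambda(t) = \lambda_1 + O(\abs{t}^{1/p})$ as $t \to 0$, and hence is Lipschitz-from-below only after we control the fractional exponent. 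The key point is that we want a \emph{linear} (Lipschitz) lower bound, not merely a Hölder one, so a little more care is needed than a naive Puiseux estimate.

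The way to get the linear bound is to look at the branch through $\lambda_1$ more carefully: write $\lambda(t) = \lambda_1 + c\, t^{q/p} + o(\abs{t}^{q/p})$ with $c \neq 0$ for the leading term (or $\lambda(t) \equiv \lambda_1$ identically, in which case there is nothing to prove). Then $\abs{\lambda(t)} \ge \abs{\lambda_1} - \abs{c}\,\abs{t}^{q/p} + o(\abs{t}^{q/p})$, which gives only a Hölder bound in general. To upgrade to linear, I would instead argue as follows: the function $t \mapsto \rho(A + tB)$ for real $t \in [0,\varepsilon_0)$ is the maximum of the moduli of finitely many Puiseux branches; each branch $\lambda_j(t)$ is continuous with $\lambda_j(0)$ an eigenvalue of $A$, and $\abs{\lambda_j(t)} \ge \abs{\lambda_j(0)} - L_j \abs{t}^{\alpha_j}$ on a neighborhood, so in fact $\rho(A+tB) \ge \rho(A) - \max_j L_j \abs{t}^{\alpha_j}$, which is still only Hölder. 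The honest route to the \emph{linear} bound is: among the branches through a maximal-modulus eigenvalue $\lambda_1$, consider the one maximizing $\abs{\lambda_j(t)}$ for small $t>0$; its modulus, being the modulus of a Puiseux series whose constant term has modulus $\rho(A)$, is a continuous function of $t^{1/p}$, and its one-sided derivative with respect to $t$ at $0$ is either finite (if the branch is differentiable, exponent $1$) or $-\infty$...

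Let me reconsider: the clean statement actually available from classical perturbation theory is that $\rho(A + tB) \ge \rho(A) - \Gamma t$ for a \emph{fixed} $B$ with $\Gamma$ depending on $B$; to get uniformity over $\norm{B}_2 \le 1$ one uses a compactness argument. So the steps are: (i) Fix $B$. By the theory of analytic perturbation of a semisimple-or-not eigenvalue, the branch(es) of eigenvalues of $A + tB$ through the eigenvalue $\lambda_1$ of maximal modulus can be expanded, and one shows there is at least one branch $\lambda(t)$ with $\abs{\lambda(t)} \ge \rho(A) - \Gamma_B t$ for $t \in (0, \varepsilon_B)$ — the point being that \emph{some} eigenvalue perturbs in a way that its modulus does not drop faster than linearly; this is because the real function $t \mapsto \max\{\abs{\mu} : \mu \in \sigma(A+tB)\}$ is a continuous selection of Puiseux branches and, by considering the branch on which the maximum is (eventually) attained, its modulus is differentiable from the right at $0$ except possibly with a finite slope — actually the cleanest formulation: the perturbed spectral radius is bounded below by $\abs{\lambda_1(t)}$ for the branch $\lambda_1(t)$ with $\lambda_1(0) = \lambda_1$, and since $\abs{\lambda_1(t)}^2 = \abs{\lambda_1}^2 + 2\,\mathrm{Re}(\overline{\lambda_1} (\lambda_1(t) - \lambda_1)) + O(\abs{t}^{2/p})$ and the first-order term in the Puiseux expansion contributes a term of order $\abs{t}^{1/p}$, one gets $\abs{\lambda_1(t)} \ge \rho(A) - C\abs{t}^{1/p}$ — hmm, this is again Hölder.

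OK — the real reason the bound is linear is the following well-known fact, which I would cite rather than reprove: if $\lambda_0$ is an eigenvalue of $A$ and $P$ is the corresponding spectral (Riesz) projection, then $\lambda_0 + t\,\frac{\tr(PB)}{\tr P} + o(t)$ is the average of the eigenvalues of $A + tB$ in the group emanating from $\lambda_0$, and more importantly the sum of those eigenvalues is analytic in $t$ (not just Puiseux) because it equals $\tr(P(t)(A+tB))$ with $P(t)$ the total projection, which is analytic. Hence there is an eigenvalue $\mu(t)$ of $A+tB$ in that group with $\mathrm{Re}(\overline{\lambda_0}\mu(t)) \ge \mathrm{Re}(\overline{\lambda_0}\lambda_0) + t\,\mathrm{Re}(\overline{\lambda_0}\, \tr(P B)/\tr P) + o(t) \cdot(\text{number of branches})$... and then $\abs{\mu(t)} \ge \mathrm{Re}(\overline{\lambda_0}\mu(t))/\abs{\lambda_0} \ge \abs{\lambda_0} - \Gamma_B t$. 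Applying this with $\lambda_0 = \lambda_1$ of maximal modulus gives $\rho(A+tB) \ge \abs{\mu(t)} \ge \rho(A) - \Gamma_B t$.

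For the uniformity over $\norm{B}_2 \le 1$: the constant $\Gamma_B$ obtained this way depends on $\lambda_1$ (fixed, from $A$), on the projection $P$ (fixed, from $A$), and on $\tr(PB)$, which is bounded by $\norm{P}\cdot\norm{B}_2 \cdot d$; the threshold $\varepsilon_B$ comes from the radius of convergence of the analytic function $t \mapsto \tr(P(t)(A+tB))$, which is bounded below uniformly in $\norm{B}_2 \le 1$ because the total projection $P(t)$ is given by a contour integral $\frac{1}{2\pi\iu}\oint (zI - A - tB)^{-1}\,dz$ around $\lambda_1$ that converges as long as $\abs{t}\norm{B}_2 < \delta$ for a $\delta$ depending only on $A$ and the separation of $\lambda_1$ from the rest of $\sigma(A)$. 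This gives uniform $\Gamma = \Gamma(A)$ and $\varepsilon_0 = \varepsilon_0(A)$ as claimed.

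\textbf{Main obstacle.} The subtle point — and the reason the Proposition is nontrivial despite "following from classical results" — is precisely the jump from the generic Hölder-$1/p$ behaviour of individual Puiseux branches to a \emph{linear} lower bound on the spectral radius. A naive branch-by-branch estimate only yields a Hölder exponent $1/d$, matching Elsner's upper bound; to do better one must exploit that it is the \emph{sum} of eigenvalues in a perturbation group, equivalently $\tr(P(t)(A+tB))$, that is analytic in $t$, and then extract from the analyticity of this symmetric function a single eigenvalue whose \emph{real part relative to the direction $\lambda_1$} moves at most linearly. Getting the dependence of all constants to be uniform over the unit ball of perturbations — which requires the contour-integral representation of the spectral projection and a lower bound on its radius of convergence in terms of $A$ alone — is the second, more routine, technical ingredient.
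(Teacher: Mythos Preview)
Your proposal lands on essentially the same argument as the paper: the key insight in both is that while individual eigenvalue branches are only Puiseux in $t$, the \emph{average} $\hat{\lambda}(t) = \frac{1}{m}\tr\bigl((A+tB)P(t)\bigr)$ of the eigenvalues in the group emanating from a maximal-modulus eigenvalue $\lambda_1$ is genuinely analytic in $t$, with $P(t)$ the Riesz projection given by a contour integral. From analyticity of $\hat{\lambda}$ one extracts a linear lower bound on $\rho(A+tB)$, and uniformity in $B$ comes from the uniform lower bound on the radius of convergence of the resolvent series.

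There are two points where your execution is looser than the paper's. First, your uniformity argument is incomplete: you bound only the first-order coefficient $\hat{\lambda}'(0) = \tr(PB)/m$ and leave the remainder as an unspecified $o(t)$, but an $o(t)$ term that depends on $B$ does not by itself yield a uniform Lipschitz constant. The paper closes this gap by observing that $\hat{\lambda}$ is bounded (by $|\lambda_1|+\delta$, since the group stays inside the contour) on a disk of radius $r_0$ independent of $B$; Cauchy's inequality then bounds \emph{all} Taylor coefficients uniformly, $|\lambda^{(n)}| \leq \delta/r_0^n$, and summing the resulting geometric series on $|t|\leq r_0/2$ gives $|\hat{\lambda}(t)-\lambda_1| \leq (2\delta/r_0)\,|t|$ with explicit, $B$-independent constants. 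Second, your extraction step via $\mathrm{Re}(\overline{\lambda_1}\,\mu(t))$ is correct but more elaborate than necessary: since $|\hat{\lambda}(t)| = \bigl|\tfrac{1}{m}\sum_i \mu_i\bigr| \leq \max_i |\mu_i| \leq \rho(A+tB)$, the inequality $|\hat{\lambda}(t)| \geq \rho(A) - \Gamma t$ already finishes the argument without passing through a directional real part.
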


For the sake of completeness we include a proof of this result in the appendix, see \ref{sec:App}. 

We expect that the regularity of the joint spectral radius does not behave significantly differently to that of the ordinary spectral radius. In view of the two previous results, the following conjecture summarizes our expectations 
concerning provable Hölder regularity properties of the joint spectral radius.
In Figure~\ref{fig:overview} we give an overview and comparison of the results that we are able to prove compared to the conjectures. For the pointwise Hölder case the situation is more favorable, as there our results are closer to the conjectures up to the specific constants, whereas as far as the locally Hölder case is concerned there are still significant gaps between what we can prove and what we would like to prove.

\newpage

\begin{con}
  \label{conjecture}$ $
  \begin{description}
  \item[L1/P1] \textbf{(Local/pointwise Hölder continuity)} 
  \label{conj:L1hoelderbound}The joint
    spectral radius is locally (pointwise) Hölder continuous with
    exponent $\alpha>0$ on the space $\mathcal{H}(d)$ of compact
    subsets of $\C^{d\times d}$ endowed with the Hausdorff
    metric. More specifically, $\alpha = 1/d$.
  \item[L2/P2] 
  \label{conj:L2lowerhoelderbound}\textbf{(Local/pointwise Hölder lower bounds)} There is
    an $\alpha>0$ such that for every $\mathcal{M} \in \mathcal{H}(d)$
    there are constants $\tau,\varepsilon_0>0$ such that
    $d_H(\mathcal{M},\mathcal{N})< \varepsilon < \varepsilon_0$
    implies
    \begin{equation}
        \rho(\mathcal{N}) \geq \rho(\mathcal{M}) - \tau \varepsilon^\alpha.
    \end{equation}
    For $L2$ we assume that $\tau,\varepsilon_0$ can be chosen
    uniformly on compact subsets of $\mathcal{H}(d)$. More
    specifically, $\alpha = 1$ in the pointwise case and
    $\alpha = 1/d$ in the local case.
  \item[L3] \textbf{(locally uniform trajectory bounds)}
    \label{conj:L3bound}
    There is a constant $\Theta_d>0$ depending only on the dimension
    $d$ and the norm $\norm{\cdot}$ such that
    for all $\calM\in \calH(d)$ with $\rho(\calM)=1$ we have for all $k\geq 1$
    \[\norm{S} \leq \Theta_d(Lk)^{d-1} \quad \forall S \in \calS_k(\calM)\]
    where $L=\max \{\norm{A} \setsep A \in \calM\}$.
  \end{description}
\end{con}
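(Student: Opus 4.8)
Since the final displayed statement is \emph{Conjecture}~\ref{conjecture} rather than a theorem, what follows is a plan of attack for its three parts, indicating which pieces look within reach (and are presumably what the paper establishes in weakened form) and which one is the genuine obstruction.

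\textbf{Toward L1/P1 (the upper Hölder bound).} The natural route is via extremal norms together with the block upper-triangular reducibility structure. For an irreducible compact set the local Lipschitz estimate of \cite{Wirt02} already gives much more than Hölder continuity, so all the difficulty sits at reducible sets, where $\rho$ need not even be locally Lipschitz. Fixing $\calM$ and bringing it, by a common change of basis, into maximal block upper-triangular form with irreducible (or zero) diagonal blocks $\calM_{ii}$ of sizes summing to $d$, one has $\rho(\calM)=\max_i\rho(\calM_{ii})$; a perturbation $\calN$ with $d(\calM,\calN)\le\varepsilon$ then has diagonal blocks that are $\varepsilon$-close to the $\calM_{ii}$ but also picks up off-diagonal blocks of size $\varepsilon$, and it is precisely these couplings that destroy Lipschitz continuity. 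The plan is: (i) estimate $\rho(\calN)$ from above by expanding products of the block matrices, where each diagonal block grows at most like $\rho(\calM_{ii})^k$ times a polynomial in $k$ (this is exactly where a bound of type L3 feeds in) while over a window of length $k$ the off-diagonal contributions add a factor polynomial in $k$ times $\varepsilon$; (ii) optimize over $k$. Fixing $\calM$ makes the pointwise case easier, which is why one still gets $\frac1{d^2+d}$ in full generality and, via the $\varepsilon$-inflation analysis, the optimal ceiling $\frac1d$; replacing the crude polynomial bound by Morris' rapid-approximation theorem \cite{morris2010rapidly} then yields the almost-optimal $\frac1{d+\varepsilon}$ for finite sets, and the honest $\frac1d$ in full generality would require removing the finiteness assumption from Morris' estimate.

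\textbf{Toward L2/P2 (the lower bound).} Here I would argue dually. Given $\calN$ with $d(\calM,\calN)\le\varepsilon$, choose $k$ and a product $S\in\calS_k(\calM)$ nearly realizing the joint spectral radius, $\rho(S)^{1/k}\ge\rho(\calM)-\delta_k$, where $\delta_k\to0$ (at an a priori unquantified rate from Berger--Wang in general, but much faster for finite sets by \cite{morris2010rapidly}). Replacing each factor of $S$ by a nearby factor of $\calN$ gives $S'\in\calS_k(\calN)$ with $\norm{S-S'}_2\lesssim kL^{k-1}\varepsilon$, and applying \Cref{prop:specrad-lowerbound} to the single matrices $S$ and $S'$ yields $\rho(S')\ge\rho(S)-\Gamma_S\,kL^{k-1}\varepsilon$, hence $\rho(\calN)\ge\rho(S')^{1/k}$. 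Choosing $k=k(\varepsilon)$ to balance $\delta_k$ against the $k$-th root of the exponential error converts the approximation rate into a Hölder exponent in $\varepsilon$; the factor $L^{k-1}$ degrades that exponent and the quality of $\delta_k$ decides by how much. Making $\tau,\varepsilon_0$ locally uniform — the $L2$ half — needs, in addition, a version of \Cref{prop:specrad-lowerbound} whose constant is controlled uniformly along the optimizing products.

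\textbf{Toward L3, and the main obstacle.} Condition L3 is a uniform polynomial growth bound: for every normalized set ($\rho(\calM)=1$) all length-$k$ products have norm $O((Lk)^{d-1})$ with a constant depending only on $d$ and the norm. For a fixed irreducible set this is just boundedness in an extremal norm; the polynomial factor $(Lk)^{d-1}$ comes from the block-triangular structure, each ``Jordan-type'' coupling of diagonal blocks of equal spectral radius contributing one power of $k$, with at most $d-1$ such couplings. The hard part will be uniformity as $\calM$ ranges over all of $\calH(d)$: both extremal norms and the block decomposition degenerate as a set approaches the locus where its irreducibility pattern changes, and there is no obvious a priori control of the constants in that limit. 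For $d=2$ one can list the finitely many reducibility patterns and check the bound by hand, which is why that case is settled; for general $d$ one presumably needs a compactness / normal-families argument on $\calH(d)$, and I expect this — not the block algebra in L1/L2 — to be where the real difficulty lies, since L3 is exactly the input that would close the gap between the provable constants and the conjectured $\frac1d$.
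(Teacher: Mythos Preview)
Your broad picture is right --- this is a conjecture, and the paper proves only weakened versions --- but two of your proposed arguments have the role of the key ingredients inverted, and one of them would fail as written.

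For the \emph{upper} Hölder bound the paper does not expand products or invoke L3 at all. Instead (\Cref{prop:hoel1}) it equips each irreducible diagonal block with a Barabanov norm, assembles these into a norm on $\C^d$ as in \Cref{lem:normbound}, and then applies a diagonal similarity $T_\varepsilon=\diag(I,\delta I,\dots,\delta^{m-1}I)$ with $\delta=\varepsilon^{1/m}$ to balance the off-diagonal blocks. This yields $\rho(\calM_\varepsilon)-\rho(\calM)\le C\varepsilon^{1/m}$ directly from a norm estimate, with no optimization over a product length $k$. Your product-expansion idea can be made to work, but it is less clean and it is not where L3 enters the argument.

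The genuine gap is in your lower-bound scheme. Your estimate $\norm{S-S'}_2\lesssim kL^{k-1}\varepsilon$ is the crude submultiplicative bound, and since typically $L>\rho(\calM)$ this error is \emph{exponentially} larger than $\rho(S)\approx\rho(\calM)^k$. Balancing against $\delta_k\sim\Lambda/k$ from \Cref{prop:converging-lower-bound} then forces $k\sim\log(1/\varepsilon)$, which produces only a logarithmic lower bound $\rho(\calN)\ge\rho(\calM)-C/\log(1/\varepsilon)$, not a Hölder one. The paper's fix is precisely to feed L3 (or its pointwise version, \Cref{lem:polynomial-growth-bound}) into \emph{this} step: normalize $\rho(\calM)=1$, use the polynomial trajectory bound to get $\norm{S-S'}\le 2\Theta^2\varepsilon k^{2d-1}$ (\Cref{lem:norm-bound-for-perturbed-product}), and then apply Elsner's theorem --- not \Cref{prop:specrad-lowerbound} --- to the pair $(S,S')$. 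Elsner is essential here because its constant is uniform in $S$; your $\Gamma_S$ from \Cref{prop:specrad-lowerbound} depends on the resolvent of $S$ and is not controlled as $k$ grows. With these two substitutions the balancing in $k$ gives the exponent $\tfrac{1}{d^2+d}$ (and $\tfrac{1}{d+\varepsilon}$ via Morris for finite sets).

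Your assessment of L3 as the real bottleneck is in line with the paper; the $d=2$ proof (\Cref{lem:polynomial-growth-bound-plane}) is indeed a case analysis on reducibility, but the irreducible case uses a geometric argument specific to $\C^2$ rather than just ``checking by hand''.
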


\begin{figure}
\begin{flushleft}
  \begin{tabular}{
    C{0.41\linewidth}
    C{0.04\linewidth}%
    C{0.41\linewidth}%
    }
\boxed{%
\begin{minipage}[m]{0.99\linewidth}
      \begin{center}
local\\ 
$\alpha$-Hölder continuity
\end{center}
\vspace*{-0.3cm}
\hspace*{-1mm}\begin{tabular}{C{0.44\linewidth}|%
C{0.44\linewidth}}
\hline\\[-4mm]
\Cref{cor:local-hoelder-dim-2} & Conj. L1 \\
on $\calH_{>0}(2)$ & on $\calH(d)$ \\
$ \alpha = \frac{1}{d^2+d}$ & $\alpha = \frac{1}{d}$
\end{tabular}
\end{minipage}
}
&  
$\stackrel{~}{\Longrightarrow}$
&
\boxed{\begin{minipage}[m]{0.99\linewidth}
        \begin{center}
pointwise\\ $\alpha$-Hölder continuity
        \end{center}
\vspace*{-0.3cm}
\hspace*{-1mm}\begin{tabular}{C{0.44\linewidth}|%
C{0.44\linewidth}}
\hline\\[-4mm]
\Cref{thm:pointwise-Hoelder-everywhere-v1} & Conj. P1 \\
$ \alpha = \frac{1}{d^2+d}$ & $\alpha = \frac{1}{d}$
\end{tabular}
  \end{minipage}
}
  \\[4ex]
~\vspace*{0.3cm}
  
\begin{minipage}[m]{0.99\linewidth}
\begin{tabular}{C{0.42\linewidth}%
C{0.04\linewidth}%
C{0.42\linewidth}}
\Cref{lem:independent-lower-bound-implies-locally-hoelder} & $\Uparrow$ &
\end{tabular}
\end{minipage}

& & 
~\vspace*{0.3cm}

\begin{minipage}[m]{0.99\linewidth}
\begin{tabular}{C{0.42\linewidth}%
C{0.04\linewidth}%
C{0.42\linewidth}}
\Cref{thm:pointwiseHoelder-v1}
& $\Uparrow$ & 
\end{tabular}
\end{minipage}
\\[2ex]
\boxed{\begin{minipage}[m]{0.99\linewidth}
      \begin{center}
local\\ 
$\alpha$-Hölder lower bounds
\end{center}
\vspace*{-0.3cm}
\hspace*{-1mm}\begin{tabular}{C{0.44\linewidth}|%
C{0.44\linewidth}}
\hline\\[-4mm]
Proof of Corollary \ref{cor:local-hoelder-dim-2} & Conj. L2\\
on $\calH_{>0}(2)$ & on $\calH(d)$ \\
$ \alpha = \frac{1}{d^2+d}$ & $\alpha = \frac{1}{d}$
\end{tabular}
\end{minipage}
}
&  
&
\boxed{\begin{minipage}[m]{0.99\linewidth}
        \begin{center}
pointwise\\ $\alpha$-Hölder 
lower bounds
        \end{center}
\vspace*{-0.3cm}
\hspace*{-1mm}\begin{tabular}{C{0.44\linewidth}|%
C{0.44\linewidth}}
\hline\\[-4mm]
  & Conj. P2 \\
$ \alpha = \frac{1}{d^2+d}$ & $\alpha = 1$
\end{tabular}
  \end{minipage}
}
  \\[5ex]

~\vspace*{0.3cm}
  
\begin{minipage}[m]{0.99\linewidth}
\begin{tabular}{C{0.42\linewidth}%
C{0.04\linewidth}%
C{0.42\linewidth}}
\Cref{lem:independent-product-bound-implies-independent-lower-bound}: & $\Uparrow$ &
 \hspace*{-0.5cm}
 \begin{minipage}[l]{0.8\linewidth}
 on $\calH_{>0}(d)$\\ 
 $\alpha= \frac{1}{d^2+d}$
 \end{minipage}
\end{tabular}
\end{minipage}

& & 
~\vspace*{0.3cm}

\begin{minipage}[m]{0.99\linewidth}
\begin{tabular}{C{0.42\linewidth}%
C{0.04\linewidth}%
C{0.42\linewidth}}
\Cref{thm:hoelder-lower-bound-allr}: & $\Uparrow$ &
$\alpha= \frac{1}{d^2+d}$\end{tabular}
\end{minipage}
\\[2ex]
\boxed{\begin{minipage}[m]{0.99\linewidth}
      \begin{center}
locally uniform\\ 
exponential-polynomial\\
trajectory bounds on $\calH_{>0}(d)$
\end{center}
\vspace*{-0.3cm}
\hspace*{-1mm}\begin{tabular}{C{0.44\linewidth}|%
C{0.44\linewidth}}
\hline\\[-4mm]
\Cref{lem:polynomial-growth-bound-plane} & Conj. L3 \\
$ d= 2$ & any $d \in\N$
\end{tabular}
\end{minipage}
}
&  
$\stackrel{~}{\Longrightarrow}$
&
\boxed{\begin{minipage}[m]{0.99\linewidth}
        \begin{center}
exponential-polynomial\\ 
trajectory bounds
        \end{center}
        \begin{center}
\textbf{Well-known}, Lemma~\ref{lem:polynomial-growth-bound}
        \end{center}
  \end{minipage}
}
\end{tabular}
\end{flushleft}
    \caption{Summary of properties proved in the paper versus those conjectured by the authors. In the left of a box the proved results are stated, whereas in the right the respective conjecture is shown. The statements labeling the implication arrows between boxes concern statements that show that the lower level statements (results as well as conjectures) imply the upper level.}
    \label{fig:overview}
\end{figure}

In view of condition L3, we also introduce the notation
\begin{equation*}
    \calH_{>0}(d) := \{ \calM\in \calH(d) \setsep \rho(\calM) > 0 \}
\end{equation*}
and note that $\calH_I(d) \subseteq \calH_{>0}(d) \subseteq \calH(d)$.

The reader will recognize that the Conjectures L1 and P2 are simply
versions of Elsner's theorem, Theorem~\ref{thm:elsner}, respectively
Proposition~\ref{prop:specrad-lowerbound} generalized to the context
of the joint spectral radius. The role of L3 is maybe less
obvious. In this paper it is shown that we have the
implication
\begin{equation}
\begin{aligned}
    L3 \quad &\Rightarrow \quad L2 \text{ on } \calH_{>0}(d) \text{ with } \alpha = \frac{1}{d^2 +d} \\ &\Rightarrow \quad L1 \text{ on } \calH_{>0}(d) \text{ with } \alpha = \frac{1}{d^2 +d}.    
\end{aligned}
\end{equation}
In other words it would be sufficient to prove Conjecture~L3 to
finalize the proof that the joint spectral radius is indeed locally
Hölder continuous, albeit not with the expected optimal constant
$\alpha = 1/d$ and only on $\calH_{>0}(d)$. To us it was somewhat of a
surprise that such a seemingly innocent statement has proved to be so
intransigent to our attempts of resolving the question.

\section{Preliminaries}
\label{sec:preliminaries}

A helpful formulation of the joint spectral radius is in terms of operator norms. Given a norm $\norm{\cdot}$ on $\K^d$ and its induced operator norm also denoted by $\norm{\cdot}$, we define
\begin{equation*}
    \norm{\mathcal{M}} := \max \{\norm{A} \setsep A \in \mathcal{M}\}.
\end{equation*}
Then it is known, \cite[Proposition 1]{RotaStra60}, that
\begin{equation}
    \rho(\mathcal{M)} = \inf \{ \norm{\mathcal{M}} \setsep \norm{\cdot} \text{ is an operator norm} \}.
\end{equation}
An operator norm $\norm{\cdot}$ is called extremal for $\mathcal{M}$, if $\rho(\mathcal{M}) = \norm{\mathcal{M}}$. An extremal norm is called a Barabanov norm, if in addition for every $x\in \K^d$ there exists an $A \in \mathcal{M}$ such that
\begin{equation*}
    \norm{Ax} = \rho(\mathcal{M}) \norm{x}. 
\end{equation*}
A sufficient condition for the existence of Barabanov norms is that the set $\mathcal{M}$ is irreducible, i.e. only the trivial subspaces $\{0\}$ and $\K^d$ are invariant under all $A\in \mathcal{M}$, \cite{Bara88,Wirt02}.

\subsection{Elementary inequalities}

Here we collect some known inequalities that will be helpful in the sequel.
The following proposition can be derived with various
constants from results of Bochi~\cite{bochi2003inequalities}, Morris~\cite{morris2010rapidly} and  Breuillard~\cite{breuillard2022joint}.

\begin{prop}
  \label{prop:converging-lower-bound}
  There is a constant $\Lambda_d$ depending only on the dimension $d$ 
  such that  for any bounded set of matrices  $\calM$ in $\C^{d \times d}$
  we have
  \begin{align*}
    \max_{1\leq k \leq n} \sup_{S \in \calS_k(\calM)} \rho(S)^{1/k} \geq \rho(\calM)(1- \frac{\Lambda_d}{n} ). 
  \end{align*}  
\end{prop}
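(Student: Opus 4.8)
The plan is to combine two ingredients: a finite-window estimate for the spectral radii appearing in the Berger--Wang formula, and a quantitative version of the sub/super-multiplicativity of the quantities $\sup_{S\in\calS_k(\calM)}\rho(S)^{1/k}$. The cleanest route is to invoke one of the cited results (Bochi, Morris, or Breuillard) in the following normalized form: after rescaling so that $\rho(\calM)=1$, there is a constant $C_d$ (depending only on $d$) and, for each $n$, a product $S\in\calS_k(\calM)$ with $1\le k\le n$ such that $\rho(S)\ge (1-C_d/n)^k$, or equivalently such that $\rho(S)^{1/k}\ge 1-C_d/n$. Taking the max over $1\le k\le n$ and undoing the normalization gives exactly the claimed inequality with $\Lambda_d$ proportional to $C_d$. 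So at the top level the proposition is a repackaging of a known ``finiteness-type'' rate estimate; the work is in extracting the uniform-in-$\calM$, uniform-in-$n$ constant from the literature formulation and checking it survives the normalization.

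First I would reduce to the case $\rho(\calM)=1$ by homogeneity: replacing $\calM$ by $\calM/\rho(\calM)$ multiplies both $\rho(\calM)$ and every $\sup_{S\in\calS_k}\rho(S)^{1/k}$ by $1/\rho(\calM)$ (and if $\rho(\calM)=0$ the inequality is trivial). Then I would recall the key structural fact underlying Bochi-type inequalities: for an irreducible $\calM$ with $\rho(\calM)=1$ there is an extremal (indeed Barabanov) norm, and one uses John's ellipsoid / a volume or trace argument to show that for products of length $k$ one can find one whose spectral radius is at least $1-O(d^2/k)$ or $1-O(d/k)$; for reducible $\calM$ one passes to the block-triangular decomposition, notes $\rho(\calM)=\max_i\rho(\calM_{ii})$ over the diagonal blocks, applies the irreducible bound to the block realizing the maximum, and lifts the resulting product back. (Alternatively, Morris's or Breuillard's statements already give the finite-product approximation rate directly; I would cite whichever gives the cleanest uniform constant.) The output of this step is: there exist $k\le n$ and $S\in\calS_k(\calM)$ with $\rho(S)^{1/k}\ge 1-\Lambda_d/n$, which is the conclusion.

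The main obstacle is bookkeeping rather than conceptual: the cited papers state their bounds with different normalizations, different norms (operator norm vs. spectral norm vs. an extremal norm), and sometimes only for finite or for irreducible sets, so one must check (i) that the constant genuinely depends only on $d$ and not on $\calM$ or on $\norm{\cdot}$, (ii) that the passage from $\rho(S)\ge(1-c/k)^k$ to $\rho(S)^{1/k}\ge 1-c/k$ and then to the ``$1-\Lambda_d/n$'' form is valid for all $k$ in the range (which it is, since $k\mapsto (1-c/k)$ is increasing and $1\le k\le n$ forces $1-c/k\ge 1-c/n$ only after taking the max over $k$ — here one simply picks the $S$ coming from $k$ as large as the cited result allows, typically $k\asymp n$), and (iii) that the reduction to the irreducible case does not degrade the constant by more than a dimensional factor. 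None of these steps is deep, but assembling them into a single clean constant $\Lambda_d$ is the substance of the proof.
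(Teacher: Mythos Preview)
Your high-level diagnosis is right---the proposition is indeed a repackaging of the Bochi/Morris/Breuillard circle of ideas---but your proposal skips the one nontrivial step that the paper's proof actually contains. Bochi's inequality in its standard form says only that there exist a \emph{fixed} window length $m_d$ and a constant $C_d\ge 1$, both depending only on $d$, with
\[
\rho(\calM)\le C_d\max_{1\le k\le m_d}\sup_{S\in\calS_k(\calM)}\rho(S)^{1/k}.
\]
This is a multiplicative bound over a bounded window; it does not by itself give the additive $1-\Lambda_d/n$ rate for arbitrary $n$. Your sentence ``for products of length $k$ one can find one whose spectral radius is at least $1-O(d/k)$'' is essentially the proposition you are trying to prove, so invoking it is circular unless you point to a specific theorem stated in exactly that form (and you don't). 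Your point (ii) in fact exposes the problem: if the cited result only hands you some $k\le m_d$, then $1-c/k$ can be far below $1-\Lambda_d/n$, and ``pick $k\asymp n$'' is not something Bochi's theorem lets you do directly.

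The paper bridges this gap with a short but essential bootstrapping trick you do not mention: apply Bochi's fixed-window inequality not to $\calM$ but to $\calS_\ell(\calM)$ for $\ell$ chosen so that $\ell m_d\le n<2\ell m_d$, use $\rho(\calM)=\rho(\calS_\ell(\calM))^{1/\ell}$, and observe that products of length $\le m_d$ from $\calS_\ell(\calM)$ are products of length $\le \ell m_d\le n$ from $\calM$. This converts the multiplicative constant $C_d$ into $C_d^{1/\ell}\le C_d^{2m_d/n}$, and the elementary inequality $x^{-1/n}\ge 1-x/n$ (for $x\ge 1$) then yields the additive $1/n$ rate with $\Lambda_d=C_d^{2m_d}$. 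Your plan to treat irreducible and reducible $\calM$ separately via Barabanov norms, John's ellipsoid, and block-triangular lifting is unnecessary here and would constitute reproving Bochi's inequality rather than using it: Bochi's result already holds for arbitrary bounded $\calM$, and the bootstrap that turns it into a $1/n$ rate is purely algebraic.
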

The statement of \Cref{prop:converging-lower-bound} is basically contained in the introduction of
  Morris \cite{morris2010rapidly}, the displayed formula after Theorem 1.2 therein, but we need slightly more explicit estimates. 
\begin{proof}
  By Bochi's inequality, \cite[Theorem B]{bochi2003inequalities}, there are 
  an integer $m_d$ and a constant $C_d \geq 1$, both only depending on $d$,
  such that
  \begin{align}
    \label{eq:bochi}
    \rho(\calM) \leq C_d\max_{1\leq k \leq m_d} \sup_{S \in \calS_k(\calM)} \rho(S)^{1/k} .
  \end{align}
  Let $n \geq m_d$ and let $\ell \in \N$ with
  $\ell m_d \leq n < (\ell+1) m_d \leq 2\ell m_d$.
  Then
  \begin{align*}
  \rho(\calM)=\rho(\calS_\ell(\calM))^{1/\ell} 
  &\leq C_d^{1/\ell} \max_{1\leq k \leq m_d} \sup_{S \in \calS_{k\ell}(\calM)} \rho(S)^{1/(k\ell)} \\
  &\leq C_d^{1/\ell} \max_{1 \leq k \leq \ell m_d} \sup_{S \in \calS_k(\calM)} \rho(S)^{1/k}\\
  &\leq C_d^{1/\ell} \max_{1 \leq k \leq n} \sup_{S \in \calS_k(\calM)} \rho(S)^{1/k}\\
  &\leq C_d^{2m_d/n} \max_{1 \leq k \leq n} \sup_{S \in \calS_k(\calM)} \rho(S)^{1/k}.
  \end{align*}
  Hence 
  \begin{align*}
    \max_{1\leq k \leq n} \sup_{S \in \calS_k(\calM)} \rho(S)^{1/k}
    \geq \rho(\calM) (C_d^{2m_d})^{-1/n}
  \end{align*}
  for $n \geq m_d$. For $n <m_d$ the same follows
  directly from \eqref{eq:bochi}.
  The result follows with $\Lambda_d:=C_d^{2m_d}$ from
  the validity of
  $1-\frac{x}{n} \leq x^{-1/n}$
  for all $x,n \geq 1$.
  \end{proof}

  For state of the art estimates on $C_d$ and $m_d$ see \cite{breuillard2022joint}.

The following lemma is mentioned in
\cite{varneyMarginalGrowthRates2022}
and is also an immediate consequence of  
\cite[Theorem 3.1]{guglielmiAsymptoticPropertiesFamily2001}.
The idea is to use the fact 
(due to Barabanov)
that a family of matrices with
joint spectral radius equal to $1$ and unbounded growth must be reducible.
Unfortunately, it is not clear that the constant obtained in that way
depends continuously on the matrix family, see Conjecture L3.

\begin{lem}
  \label{lem:polynomial-growth-bound}
  Let $\calM \in \calH(d)$ be a compact set of matrices with $\rho(\calM)>0$.
  There is a constant $\Theta>0$ depending on $\calM$ such that
  \begin{equation}
  \label{eq:lemma5-bound}
    \norm{A_k \dots A_1} \leq \Theta k^{d-1}\rho(\calM)^{k}
  \end{equation}
  for all $k\in \N$, $A_i \in \calM$, $i=1,\ldots,k$.
\end{lem}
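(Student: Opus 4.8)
The plan is to reduce to the irreducible case by induction on the dimension $d$, using a block-triangular decomposition of $\calM$. First I would normalize so that $\rho(\calM) = 1$. If $\calM$ is irreducible, then by the result of Barabanov/Wirth there exists an extremal (indeed Barabanov) norm $\norm{\cdot}_*$ with $\norm{S}_* \le 1$ for all $S \in \calS_k(\calM)$ and all $k$; comparing $\norm{\cdot}_*$ with the fixed norm $\norm{\cdot}$ via the equivalence constants gives $\norm{A_k \cdots A_1} \le \Theta$ for a constant $\Theta$ depending on $\calM$, which is the claimed bound with the trivial polynomial factor $k^0$. So the irreducible case is immediate.

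For the general (reducible) case, I would pick a basis in which every $A \in \calM$ is block upper triangular with diagonal blocks $\calM^{(1)}, \ldots, \calM^{(r)}$ of sizes $d_1, \ldots, d_r$ (with $\sum d_i = d$), where each $\calM^{(i)}$ is irreducible. Each diagonal block family satisfies $\rho(\calM^{(i)}) \le \rho(\calM) = 1$, so by the irreducible case applied to $\calM^{(i)}/\rho(\calM^{(i)})$ — or more simply by submultiplicativity and an extremal norm when $\rho(\calM^{(i)}) > 0$, and a Gelfand-type argument when $\rho(\calM^{(i)}) = 0$ — the diagonal blocks of any product $A_k \cdots A_1$ are bounded by a constant $\Theta_0$ uniformly in $k$. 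Expanding the product $A_k \cdots A_1$ in block form, the $(i,j)$-block with $i < j$ is a sum over chains $i = i_0 < i_1 < \cdots < i_s = j$ and over the positions $1 \le t_1 < \cdots < t_s \le k$ where the product "jumps" between blocks, of terms that are alternating runs of diagonal-block products (each bounded by $\Theta_0$) interspersed with off-diagonal entries of single matrices (each bounded by $L = \norm{\calM}$). The number of chains is bounded by a constant depending only on $d$ (at most $2^{r}$), and for each chain the number of choices of $(t_1, \ldots, t_s)$ with $s \le r-1$ is $O(k^{r-1}) = O(k^{d-1})$. Summing gives $\norm{A_k \cdots A_1} \le \Theta k^{d-1}$ for a suitable $\Theta$ depending on $\calM$.

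The one genuine subtlety is handling diagonal blocks with $\rho(\calM^{(i)}) = 0$: here there is no extremal norm to invoke directly, but a nilpotence/Gelfand argument (a bounded family with joint spectral radius $0$ has bounded products — indeed, all sufficiently long products vanish in a suitable sense, or at least stay bounded by a constant) still yields the needed uniform constant $\Theta_0$. One must also be slightly careful that the hypothesis $\rho(\calM) > 0$ is used only to guarantee the final exponent $k^{d-1}$ is not wasteful relative to the growth $\rho(\calM)^k$; in fact the argument above gives the bound with $\rho(\calM)^k$ reinstated by homogeneity. I expect the bookkeeping of the block expansion — counting chains and jump positions correctly — to be the most error-prone step, though it is entirely elementary; the conceptual content is entirely in the reduction to irreducible blocks plus the combinatorial count of how a block-triangular product of length $k$ can distribute its "off-diagonal jumps."
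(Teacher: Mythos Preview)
The paper does not give its own proof of this lemma; it cites \cite{varneyMarginalGrowthRates2022} and \cite[Theorem~3.1]{guglielmiAsymptoticPropertiesFamily2001} and records only the key idea (due to Barabanov) that an irreducible family with joint spectral radius $1$ has uniformly bounded products. Your argument is a correct and standard elaboration of exactly this idea: pass to a maximal invariant flag, bound each diagonal run via an extremal norm on the irreducible blocks, and count the at most $r-1\le d-1$ off-diagonal jumps in a length-$k$ block-triangular product to produce the polynomial factor $k^{d-1}$.

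One simplification: in a maximal flag decomposition the diagonal block families are irreducible or equal to $\{0\}$ (the paper uses this explicitly in the proof of Proposition~\ref{prop:hoel1}), so the case $\rho(\calM^{(i)})=0$ that you flag as a subtlety is in fact trivial---the block family is identically zero and all diagonal runs on that block vanish---and no separate Gelfand-type argument is needed.
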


\begin{rem}
  \label{rem:L3-altenative-formulation}
  It is not difficult to show that Conjecture L3 is equivalent to the following:
  For every
    compact set $\calK \subseteq \mathcal{H}(d)$ there is a constant
    $\Theta$ such that for every 
    $\mathcal{M}\in \calK$
    it holds
    that
    \[\norm{A_k \dots A_1} \leq\begin{cases}
                 \Theta k^{d-1} \rho(\calM)^k \text{ for } k\geq
                     1&\text{ if } \rho(\calM)\geq 1,\\
                 \Theta k^{d-1} \rho(\calM)^{k-(d-1)} \text{ for }
                   k\geq d-1 &\text{ if } \rho(\calM) \in [0,1)
               \end{cases}
             \]
    for every $A_i \in \calM$, $i=1,\ldots,k$.
In other words, we conjecture that \Cref{lem:polynomial-growth-bound}
holds with a uniform constant on $\calK$.
\end{rem}

The following example shows that one cannot simply require the first of the previous two inequalities for all $\calM$ with $\rho(\calM)>0$.

 \begin{example}
     \label{example:L3isintrouble}
     Consider the set $\calK \subseteq \calH(2)$ consisting of singletons given as follows.
     \begin{equation*}
         \calK = \{ \calM(\varepsilon) \setsep \varepsilon \in [0,1] \},\ \calM(\varepsilon) := \{ A_\varepsilon\}, \ A_\varepsilon:= \begin{bmatrix}
             \varepsilon & 1 \\ 0 & \varepsilon
         \end{bmatrix}, \varepsilon \in [0,1].
     \end{equation*}
     It is clear that $\rho(\calM(\varepsilon)) = \varepsilon$ and $\norm{A_\varepsilon}_2 \geq 1$. Thus for $k=1$ it is impossible to satisfy 
     the first inequality in Remark~\ref{rem:L3-altenative-formulation} with a uniform constant $\Theta$. Moreover, for $k> 1$ we have
     \begin{equation*}
         A_\varepsilon^k = \begin{bmatrix}
             \varepsilon^k & k \varepsilon^{k-1} \\
             0 & \varepsilon^k
         \end{bmatrix}
     \end{equation*}
     and the condition
     \begin{equation*}
         k \varepsilon^{k-1} \leq \norm{A_\varepsilon^k}_2 
         \leq \Theta \varepsilon^k k
     \end{equation*}
     leads to the impossible condition $1 \leq \Theta \varepsilon$, $\varepsilon\in (0,1]$. Thus even a relaxation of the first condition to just requiring it for larger $k$ would not resolve the issue. The second condition on the other hand is easily satisfied.
 \end{example}

\subsection{Hölder continuity}
\label{subsec:Hölder}
Here we recall a few definitions connected to Hölder continuity. 
We use these notions in the way they are defined e.g.
in \cite{bauerProbabilityTheory2011}
or \cite{bezerraUpperBoundRegularity2023}.
\begin{defn}
\label{def:Hölder}
Let $(X,d_X),(Z,d_Z)$ be metric spaces. A function $f:X \to Z$ is called 
\begin{enumerate}[(i)]
    \item \emph{Hölder continuous at $x_0\in X$} with exponent $\alpha$, if
    \begin{equation}
        \limsup_{x\to x_0} \frac{d_Z(f(x), f(x_0))}{d_X(x,x_0)^{\alpha}} < \infty.
    \end{equation}
    We call $f$ \emph{pointwise Hölder continuous}, if it is Hölder continuous at every $x_0\in X$.
    \item \emph{Hölder continuous on $X$} with exponent $\alpha$, if
    \begin{equation}
        \sup_{x,y\in X, x\neq y} \frac{d_Z(f(x), f(y))}{d_X(x,y)^{\alpha}} < \infty.
    \end{equation}
    \item  \emph{locally Hölder continuous on $X$} with exponent $\alpha$, if for every compact subset $K\subseteq X$ it is Hölder continuous on $K$ with exponent $\alpha$.
\end{enumerate}
\end{defn}

Note that Hölder continuity at a point is a local property.
Clearly, if $X$ is compact, then local Hölder continuity implies Hölder continuity, while this may fail if $X$ is not compact. However,
even for compact $X$,  pointwise Hölder continuity does not imply
Hölder continuity. Regarding Hölder continuity, we require the following simple observations.

\begin{lem}
  \label{lem:Hoelder-via-integers}
  Let $p,q \in \N$ and let $(X,d_X)$ be a metric space.  Assume
  that for $f: X \to \R$ and $x_0 \in X$ there are an integer $n_0 >0$
  and constants $\tau, \Omega>0$ such that for all $y \in X$ and all
  $n\geq n_0$
  \begin{align*}
    d_X(x_0,y) \leq  \frac{\tau}{n^q} \quad\implies \quad f(x_0) \geq f(y)-\frac{\Omega}{n^p}.
  \end{align*}
  Then
  \begin{align*}
    f(x_0) \geq f(y) - (2^p \Omega \tau^{-p/q}) d_X(x_0,y)^{p/q}
  \end{align*}
  for all $y \in X$ with $d_X(x_0,y) \leq \tau n_0^{-q}$.
\end{lem}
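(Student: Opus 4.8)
The idea is a routine dyadic-pigeonhole argument: the hypothesis gives us control of $f(x_0)-f(y)$ in terms of $n^{-p}$ whenever $y$ lies within $\tau n^{-q}$ of $x_0$, and we want to convert this into a genuine Hölder estimate in $d_X(x_0,y)$. Given $y$ with $\delta := d_X(x_0,y) \leq \tau n_0^{-q}$, I would choose the integer $n \geq n_0$ as large as possible subject to $\delta \leq \tau n^{-q}$, i.e. set $n := \lfloor (\tau/\delta)^{1/q} \rfloor$. Since $\delta \leq \tau n_0^{-q}$ we have $(\tau/\delta)^{1/q} \geq n_0 \geq 1$, so this $n$ is a well-defined integer $\geq n_0$, and by construction $\delta \leq \tau n^{-q}$, so the hypothesis applies and yields $f(x_0) \geq f(y) - \Omega n^{-p}$.

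It remains to bound $\Omega n^{-p}$ by a constant times $\delta^{p/q}$. By the floor inequality $n = \lfloor (\tau/\delta)^{1/q} \rfloor \geq \tfrac{1}{2}(\tau/\delta)^{1/q}$, using that $(\tau/\delta)^{1/q} \geq 1$ (for any real $t \geq 1$ one has $\lfloor t\rfloor \geq t/2$). Hence $n^{-p} \leq 2^p (\delta/\tau)^{p/q} = 2^p \tau^{-p/q}\,\delta^{p/q}$, and therefore
\begin{align*}
  f(x_0) \;\geq\; f(y) - \Omega n^{-p} \;\geq\; f(y) - \bigl(2^p\,\Omega\,\tau^{-p/q}\bigr)\, d_X(x_0,y)^{p/q},
\end{align*}
which is exactly the claimed inequality.

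There is essentially no obstacle here; the only point requiring a moment's care is checking that the chosen $n$ is genuinely an integer $\geq n_0$ (this is where the restriction $d_X(x_0,y) \leq \tau n_0^{-q}$ is used) and that the elementary bound $\lfloor t \rfloor \geq t/2$ for $t \geq 1$ gives the factor $2^p$ in the stated constant. One could alternatively phrase the whole argument without floors by working with a real parameter and then discretizing, but the floor version is cleanest and matches the form of the constant in the statement.
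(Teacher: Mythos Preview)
Your proof is correct and is essentially identical to the paper's: both select the largest integer $n$ with $d_X(x_0,y)\leq \tau n^{-q}$ and then exploit $n+1\leq 2n$ (equivalently your $\lfloor t\rfloor\geq t/2$ for $t\geq 1$) to convert $\Omega n^{-p}$ into $2^p\Omega\tau^{-p/q}d_X(x_0,y)^{p/q}$. The only cosmetic difference is that the paper phrases the choice of $n$ via the inequality $\tau(n+1)^{-q}<d_X(x_0,y)\leq \tau n^{-q}$ rather than via the floor function.
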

\begin{proof}
  Let $0<d_X(x_0,y)\leq \tau n_0^{-q}$.
  There is
  $n \in \N, n \geq n_0$
  such that
  \[\frac{\tau}{(2n)^q} \leq \frac{\tau}{(n+1)^q} < d_X(x_0,y) \leq \frac{\tau}{n^q}.\]
  Hence
  \[f(x_0) \geq f(y)-\Omega n^{-p}
  = f(y)-2^p \Omega ((2n)^{-q})^{p/q}
  \geq f(y)-2^p \Omega \tau^{-p/q}
  d_X(x_0,y)^{p/q}.\qedhere\]
\end{proof}

\begin{lem}
    \label{lem:Hoeldersimple1}
    Let $f:[0,\eta] \to \R$ be a continuous function that is
    $\alpha$-Hölder at $0$. Then there exists a constant $C_\eta$ such that
    \begin{equation}
        \label{eq:Hoeldersimple1}
        \abs{f(x) - f(0)} \leq C_\eta x^{\alpha}, \quad x \in [0,\eta].
    \end{equation}
\end{lem}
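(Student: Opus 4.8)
The plan is to split $[0,\eta]$ into a one-sided neighbourhood of $0$, on which the pointwise Hölder hypothesis gives the estimate essentially for free, and the complementary compact piece that is bounded away from $0$, on which continuity of $f$ gives a crude uniform bound that can be converted into the desired power-law form because $x\mapsto x^\alpha$ is bounded below there.

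First I would unwind the hypothesis: by the definition of $\alpha$-Hölder continuity at $0$ (\Cref{def:Hölder}(i)) we have $\limsup_{x\to 0}\abs{f(x)-f(0)}/x^{\alpha}<\infty$, so there exist a constant $M>0$ and a radius $\delta\in(0,\eta]$ such that $\abs{f(x)-f(0)}\leq M x^{\alpha}$ for all $x\in(0,\delta]$, and this also holds trivially at $x=0$. Next, on the compact interval $[\delta,\eta]$ the continuous function $f$ is bounded, so there is $K>0$ with $\abs{f(x)-f(0)}\leq K$ for all $x\in[\delta,\eta]$; since $x\mapsto x^{\alpha}$ is nondecreasing we have $x^{\alpha}\geq\delta^{\alpha}>0$ on this interval, whence $\abs{f(x)-f(0)}\leq K=(K\delta^{-\alpha})\,\delta^{\alpha}\leq (K\delta^{-\alpha})\,x^{\alpha}$ there. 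Combining the two ranges and setting $C_\eta:=\max\{M,\,K\delta^{-\alpha}\}$ yields \eqref{eq:Hoeldersimple1} on all of $[0,\eta]$.

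There is no real obstacle here; the statement is essentially the standard observation that pointwise Hölder continuity at a point, together with boundedness (supplied by continuity on a compact set), upgrades to a global Hölder-type bound relative to that point. The only places requiring minor care are that the finiteness of the $\limsup$ genuinely produces a \emph{uniform} constant $M$ on some punctured neighbourhood $(0,\delta]$ of $0$, and that $\delta>0$ so that $\delta^{\alpha}$ is a legitimate positive lower bound for $x^{\alpha}$ on $[\delta,\eta]$.
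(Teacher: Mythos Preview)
Your proposal is correct and follows essentially the same approach as the paper: split $[0,\eta]$ at a small $\delta>0$, use the pointwise Hölder condition on $[0,\delta]$, use continuity on the compact remainder to get a uniform bound and divide by $\delta^{\alpha}$, then take the maximum of the two constants. The paper's proof is the same argument with $\varepsilon_0$, $\tilde C$, and $h$ playing the roles of your $\delta$, $M$, and $K$.
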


\begin{proof}
    By Definition~\ref{def:Hölder} (i) there exists an $\varepsilon_0>0$ and a $\tilde{C}>0$ such that $\abs{f(x) - f(0)} \leq \Tilde{C} x^\alpha$ for all $x \in [0,\varepsilon_0]$.
    If $\varepsilon_0=\eta$ we are done; otherwise let $h:= \max \{ \abs{f(x)-f(0)} \setsep x \in [\varepsilon_0,\eta]\}$. Set $C_\eta:= \max\{ \tilde{C}, h \varepsilon_0^{-\alpha}\}$, then \eqref{eq:Hoeldersimple1} follows for all $x\in[0,\eta]$.
\end{proof}

\section{Reducibility and norms}
\label{sec:norms}

In this section we recall the property of reducibility and briefly discuss particular norms that are adapted to the reducibility structure of $\calM\in\calH(d)$. Recall that $\calM \in \calH(d)$ is called reducible, if there is a nontrivial subspace $X \subseteq \K^{d}$ such that $X$ is $A$-invariant for all $A\in\calM$. For a reducible $\calM$ there is a maximal flag $\{0\} = F_0 \subsetneq F_1 \subsetneq \cdots \subsetneq F_m = \K^{d}$ such that every subspace $F_j$, $j=0,\ldots,m$, is $A$-invariant for all $A\in\calM$. We will call the length $m$ of this maximal flag the reducibility index of $\calM$. Note that with this convention a reducibility index of $1$ means that the system is in fact irreducible, i.e. no nontrivial joint invariant subspace exists.

We need the following property of norms defined with respect to a flag.
\begin{lem}
\label{lem:normbound}
Let $\K^d = \bigoplus_{i=1}^m X_i$ for subspaces $X_i, i=1,\ldots,m$. Assume each $X_i$ is endowed with a norm $v_i$. Let $\pi_i: \K^d\to X_i$ be the projection from $\K^d$ to $X_i$ along the complementary subspace $\bigoplus_{j\neq i} X_j$ and let
 $\imath_i:X_i\to\K^d$ be the canonical injection.
 Define a norm $v$ on $\K^d$ by
\begin{equation}
\label{eq:vdef}
    v(x) = \norm{ (v_1 (\pi_1(x)), \ldots, v_m(\pi_m(x)))}_2. 
\end{equation}
Then we have for the operator norm induced by $v$ and any $A\in \K^{d\times d}$ that
\begin{equation}
\label{eq:nicenorm-euclidean}
    v(A) \leq \norm{ (r_{ij})_{i,j=1}^m}_2,
\end{equation}
where we denote by $v_{ij}$ the induced operator norm from $(X_j, v_j)$ to $(X_i,v_i)$ and
\begin{equation}
    r_{ij} := v_{ij}(\pi_i A \imath_j).
\end{equation}
\end{lem}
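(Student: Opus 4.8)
The plan is to reduce everything to a finite-dimensional comparison in $\R^m$ with the Euclidean norm, exploiting two elementary facts: the Euclidean norm is monotone with respect to the componentwise order on the nonnegative orthant, and the comparison matrix $R := (r_{ij})_{i,j=1}^m$ has nonnegative entries.

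First I would fix $x \in \K^d$ and decompose it along the direct sum, writing $x = \sum_{j=1}^m \imath_j(\pi_j(x))$. Applying $A$ and then $\pi_i$ gives $\pi_i(Ax) = \sum_{j=1}^m (\pi_i A \imath_j)(\pi_j(x))$, since $\pi_i$ and $A$ are linear. Then by the triangle inequality for $v_i$ and the definition of the induced operator norm $v_{ij}$,
\begin{equation*}
  v_i(\pi_i(Ax)) \;\leq\; \sum_{j=1}^m v_i\bigl((\pi_i A \imath_j)(\pi_j(x))\bigr) \;\leq\; \sum_{j=1}^m v_{ij}(\pi_i A \imath_j)\, v_j(\pi_j(x)) \;=\; \sum_{j=1}^m r_{ij}\, v_j(\pi_j(x)).
\end{equation*}

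Next I would introduce the vectors $\xi, \eta \in \R^m_{\geq 0}$ with components $\xi_j := v_j(\pi_j(x))$ and $\eta_i := v_i(\pi_i(Ax))$, so that $v(x) = \norm{\xi}_2$ and $v(Ax) = \norm{\eta}_2$ by \eqref{eq:vdef}. The displayed inequality says precisely $0 \leq \eta_i \leq (R\xi)_i$ for every $i$, and since $R$ has nonnegative entries and $\xi$ is nonnegative, $R\xi$ lies in the nonnegative orthant as well. Monotonicity of $\norm{\cdot}_2$ on $\R^m_{\geq0}$ then yields $\norm{\eta}_2 \leq \norm{R\xi}_2 \leq \norm{R}_2 \norm{\xi}_2$, i.e. $v(Ax) \leq \norm{(r_{ij})_{i,j=1}^m}_2\, v(x)$. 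Taking the supremum over $x \neq 0$ gives \eqref{eq:nicenorm-euclidean}.

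There is no real obstacle here; the only points that require a moment's care are that the $r_{ij}$ are finite (each $\pi_i A \imath_j$ is a linear map between finite-dimensional normed spaces, hence bounded) and that the monotonicity of the Euclidean norm on the nonnegative orthant is applied to the genuinely nonnegative vectors $\eta$ and $R\xi$ — a step that would fail for a general norm on $\R^m$, which is why the statement is phrased with $\norm{\cdot}_2$ in both \eqref{eq:vdef} and \eqref{eq:nicenorm-euclidean}.
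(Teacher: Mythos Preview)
Your proof is correct and is essentially the same argument as the paper's: decompose $x$ along the $X_j$, apply the triangle inequality and the operator-norm bound to obtain $v_i(\pi_i Ax)\leq \sum_j r_{ij}\,v_j(\pi_j x)$, and then use monotonicity of the Euclidean norm on the nonnegative orthant to conclude $v(Ax)\leq\norm{R}_2\,v(x)$. The paper writes these steps a bit more compactly by immediately recognizing the matrix--vector product, but the logic is identical.
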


\begin{proof}
Let $x=\sum_{i=1}^m x_i \in \K^d$ with $v(x)=1$ and $x_i\in X_i$, $i=1,\ldots,m$. Then
\begin{multline*}
    v(Ax) = \norm{\begin{bmatrix}
    v_1(\sum_{i=1}^m \pi_1 A  x_i) \\
    \vdots \\
    v_m(\sum_{i=1}^m \pi_m A  x_i)
    \end{bmatrix}}_2
    \leq
    \norm{\begin{bmatrix}
    \sum_{i=1}^m v_1(\pi_1 A \imath_i x_i) \\
    \vdots \\
    \sum_{i=1}^m v_m(\pi_m A \imath_i x_i)
    \end{bmatrix}}_2 \\
    \leq \norm{\begin{bmatrix}
    \sum_{i=1}^m v_{1i}(\pi_1 A \imath_i)v_i(x_i) \\
    \vdots \\
    \sum_{i=1}^m v_{mi}(\pi_m A \imath_i)v_i(x_i)
    \end{bmatrix}}_2 \\
    = \norm{\begin{bmatrix}
    v_{11}(\pi_1 A \imath_1) &\ldots & v_{1m}(\pi_1 A \imath_m) \\
    \vdots&& \vdots \\
    v_{m1}(\pi_m A \imath_1) &\ldots & v_{mm}(\pi_m A \imath_m)
    \end{bmatrix} 
    \begin{bmatrix}
    v_1(x_1) \\ \vdots \\ v_m(x_m)
    \end{bmatrix}}_2
\end{multline*}
and the claim follows as we have assumed that $x$ is arbitrary with $v(x) =1$.
\end{proof}

Let $v$ be a norm on $\K^d$. Then we define
\begin{eqnarray}
    \label{eq:112}
    c^-(v) &:=& \min \{ v(x) \setsep \norm{x}_2 = 1 \} \,,\\ 
    c^+(v) &:=& \max \{ v(x) \setsep \norm{x}_2 = 1 \} \,. 
\end{eqnarray}
Note that for any $A\in \K^{d \times d}
$ we have for the induced operator norms that
\[ \frac{c^-(v)}{c^+(v)}\norm{A}_2 \le v(A) \leq
\frac{c^+(v)}{c^-(v)}\norm{A}_2 \,.\]  
We define the eccentricity of a norm $v$ with respect to the Euclidean norm by 
\begin{equation}
    \label{eq:def-ecc}
    \ecc_2(v) := \frac{c^+(v)}{c^-(v)}.
\end{equation}

The following observation on the norms constructed in \Cref{lem:normbound} will be helpful in estimating constants in Hölder estimates.

\begin{lem}
\label{lem:normbound-eccentricity}
Let $\K^d = \bigoplus_{i=1}^m X_i$ for pairwise orthogonal subspaces $X_i, i=1,\ldots,m$. Assume each $X_i$ is endowed with a norm $v_i$ satisfying $c^-(v_i)=1$. Let $\pi_i: \K^d\to X_i$ be the orthogonal projection from $\K^d$ to $X_i$ 
and define a norm $v$ on $\K^d$ by
\begin{equation}
    v(x) = \norm{ (v_1 (\pi_1(x)), \ldots, v_m(\pi_m(x)))}_2. 
\end{equation}
Then we have for the operator norm induced by $v$ that
\begin{equation}
\ecc_2(v) = \max_{i=1,\ldots,m} \ecc_2(v_i).
\end{equation}
\end{lem}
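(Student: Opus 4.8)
The plan is to exploit the orthogonality of the decomposition $\K^d = \bigoplus_{i=1}^m X_i$, which yields the Pythagorean identity $\norm{x}_2^2 = \sum_{i=1}^m \norm{\pi_i(x)}_2^2$ for every $x \in \K^d$. Combined with the elementary two-sided bound $c^-(v_i)\norm{y}_2 \le v_i(y) \le c^+(v_i)\norm{y}_2$, valid for every $y \in X_i$ simply by the definition of $c^\pm(v_i)$, this lets me sandwich $v(x)$ between multiples of $\norm{x}_2$ and read off $c^\pm(v)$ directly.

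First I would establish $c^+(v) = \max_i c^+(v_i)$. For any $x$ with $\norm{x}_2 = 1$,
\[
v(x)^2 = \sum_{i=1}^m v_i(\pi_i(x))^2 \le \sum_{i=1}^m c^+(v_i)^2 \norm{\pi_i(x)}_2^2 \le \Bigl(\max_i c^+(v_i)^2\Bigr)\sum_{i=1}^m \norm{\pi_i(x)}_2^2 = \max_i c^+(v_i)^2,
\]
so $c^+(v) \le \max_i c^+(v_i)$. For the reverse inequality pick $i_0$ attaining the maximum and a vector $x \in X_{i_0}$ with $\norm{x}_2 = 1$ and $v_{i_0}(x) = c^+(v_{i_0})$; since $\pi_{i_0}(x) = x$ and $\pi_i(x) = 0$ for $i \ne i_0$ we get $v(x) = v_{i_0}(x) = c^+(v_{i_0})$, and equality follows.

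Next I would treat $c^-(v)$ in the same way, now using the hypothesis $c^-(v_i) = 1$: for every $x$ with $\norm{x}_2 = 1$,
\[
v(x)^2 = \sum_{i=1}^m v_i(\pi_i(x))^2 \ge \sum_{i=1}^m c^-(v_i)^2 \norm{\pi_i(x)}_2^2 = \sum_{i=1}^m \norm{\pi_i(x)}_2^2 = 1,
\]
so $c^-(v) \ge 1$, while choosing any $x$ in a single summand $X_i$ with $\norm{x}_2 = 1$ and $v_i(x) = c^-(v_i) = 1$ gives $v(x) = 1$; hence $c^-(v) = 1$. Putting the two computations together, $\ecc_2(v) = c^+(v)/c^-(v) = \max_i c^+(v_i)$, and since $c^-(v_i) = 1$ we have $\ecc_2(v_i) = c^+(v_i)$ for each $i$, so $\ecc_2(v) = \max_i \ecc_2(v_i)$.

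I do not expect any genuine obstacle: the argument is entirely elementary, the only point needing a moment's attention being that the extrema defining $c^\pm(v)$ are attained by vectors supported in a single orthogonal summand, which is immediate because on such vectors $v$ coincides with the corresponding $v_i$. The role of the orthogonality assumption is precisely to make the Pythagorean step exact; the normalization $c^-(v_i) = 1$ is what turns the lower estimate into an equality $c^-(v) = 1$ rather than a mere inequality.
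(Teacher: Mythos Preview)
Your proof is correct and follows essentially the same approach as the paper: both arguments use the Pythagorean identity from orthogonality together with the pointwise bounds $c^-(v_i)\norm{y}_2 \le v_i(y) \le c^+(v_i)\norm{y}_2$, and both verify that the extrema are attained on vectors supported in a single summand. Your presentation is slightly more streamlined in that you compute $c^+(v)$ and $c^-(v)$ separately before forming the ratio, whereas the paper bounds $\ecc_2(v)$ directly, but the content is the same.
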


\begin{proof}
For $i=1,\ldots,m$ we have $c^-(v_i)=1$ and so $v_i(x) \geq \norm{x}_2$ for $x\in X_i$. By pairwise orthogonality of the $X_i$, it follows for $x \in \K^d$  that
\begin{multline*}
    v(x) = \norm{ (v_1 (\pi_1(x)), \ldots, v_m(\pi_m(x)))}_2 \\
    \ge \norm{(\norm{\pi_1(x)}_2, \ldots, \norm{\pi_m(x)}_2)}_2 = \norm{x}_2.
\end{multline*}
This implies $c^-(v) = 1$ as the minimum is attained in each $X_i$.

For each $i=1,\ldots, m$ we have
\begin{equation*}
 \ecc_2(v) = c^+(v) \geq \max \{ v(x) \setsep \norm{x}_2 = 1, x \in X_i \} = \ecc_2(v_i).
\end{equation*}
So that $\ecc_2(v) \geq \max_{i=1,\ldots,m} \ecc_2(v_i)$. 
To prove the converse, note that for each $x_i \in X_i$ we have
\begin{equation}
\label{eq:ccomp}
     v_i(x_i) \leq c^+(v_i) \norm{x_i}_2.
\end{equation}
Using $c^-(v) = 1$ again, it follows that 
\begin{align*}
    \ecc_2(v) &= c^+(v)  = \max_{\norm{x}_2=1}\norm{(v_1(x_1),\ldots,v_m(x_m))}_2\\
    &\leq \max_{\norm{x}_2=1}\norm{(c^+(v_1)\norm{x_1}_2,\ldots,c^+(v_m)\norm{x_m}_2)}_2 \\
    &\leq \max_{i=1,\ldots,m}c^+(v_i) \max_{\norm{x}_2=1}\norm{(\norm{x_1}_2,\ldots,\norm{x_m}_2)}_2 \\
    &= \max_{i=1,\ldots,m}c^+(v_i) = \max_{i=1,\ldots,m}\ecc(v_i).
\end{align*}
This shows the assertion.
\end{proof}

We note that without a normalizing constraint like the one chosen in Lemma~\ref{lem:normbound-eccentricity}, i.e. that $c^-(v_i)=1$ for all $i$, the claim of the lemma cannot be true. For instance, if $\K^d = X_1 \bigoplus X_2$ with corresponding norms $v_1,v_2$, the eccentricity of $v_\alpha(x) = \norm{(v_1(x_1), \alpha v_2(x_2)}$, $\alpha >0$, can be arbitrarily large.

\section{$\varepsilon$-inflation}
\label{sec:increasing}

In this section we consider the change of the joint spectral radius as norm balls in subspaces are added to an original compact set $\mathcal{M}$.
To set the stage, 
let $V$ be a linear subspace of $\K^{d\times d}$ and let $\norm{\cdot}$ be a norm on $\K^{d\times d}$ (not necessarily a matrix norm).
Define the unit ball in $V$ by
\begin{equation}
    \label{eq:def:BV}
    \calB_V := \{ A \in V \setsep \norm{A} \leq 1 \}.
\end{equation}
Let $\mathcal{M}\in \mathcal{H}(d)$. We consider maps of the type
\begin{equation}
\label{eq:def:Meps}
    \varepsilon \mapsto \mathcal{M}_\varepsilon := \mathcal{M} + \varepsilon \calB_V.
\end{equation}
and want to investigate the regularity of the map $r: \varepsilon \mapsto \rho(\mathcal{M}_\varepsilon)$. We first show a Hölder continuity property of this map at $\varepsilon=0$ and in the next step extend this to Hölder continuity on compact intervals. The Hölder exponent turns out to depend on the index of reducibility of $\mathcal{M}$.

\begin{prop}
\label{prop:hoel1}
Let $\norm{\cdot}$ be a norm on $\K^d$ and
let $\mathcal{M}\in\mathcal{H}(d)$ have index of reducibility $m$. Let $V$ be a subspace of $\K^{d\times d}$ and define $\mathcal{M_\varepsilon}:=\mathcal{M} + \varepsilon \calB_V$, $\varepsilon>0$.
Then 
\begin{equation*}
    r : \varepsilon\mapsto \rho\left(\mathcal{M}_\varepsilon\right)
\end{equation*}
is increasing. In addition, it is Hölder continuous at $0$ with exponent $1/m$. In particular, for any $\eta>0$ there exists a constant $C_\eta$ such that
\begin{equation}
    r(\varepsilon) - r(0) \leq C_\eta \varepsilon^{1/m}, \quad \varepsilon \in [0,\eta].
\end{equation}
\end{prop}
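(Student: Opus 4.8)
The plan is to reduce everything to the flag structure of $\calM$ and then apply the two norm lemmas of Section~\ref{sec:norms}. First I would pick a maximal flag $\{0\}=F_0\subsetneq F_1 \subsetneq \cdots \subsetneq F_m = \K^d$ of joint invariant subspaces of $\calM$, and choose pairwise orthogonal complements $X_i$ with $F_j = \bigoplus_{i\le j} X_i$, so $\dim(F_j/F_{j-1}) = \dim X_i$. Monotonicity of $r$ is immediate: $\calM_{\varepsilon} \subseteq \calM_{\varepsilon'}$ for $\varepsilon \le \varepsilon'$, and $\rho$ is monotone under set inclusion (directly from \eqref{eq:jsr}). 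The substance is the Hölder bound at $0$.

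For the upper bound I would argue as follows. Fix $\eta>0$ and set $\rho_0 := r(0) = \rho(\calM)$, $L := \norm{\calM_\eta}$ (a uniform bound on all operator norms of matrices in $\calM_\varepsilon$ for $\varepsilon\le\eta$, in some fixed norm). By Berger–Wang \eqref{eq:Berg-Wang-jsr} it suffices to bound $\rho(S)^{1/k}$ for $S \in \calS_k(\calM_\varepsilon)$. Write $S = B_{k-1}\cdots B_0$ with $B_j = A_j + \varepsilon E_j$, $A_j \in \calM$, $E_j \in \calB_V$. Crucially, each $A_j$ preserves the whole flag, so in the block decomposition induced by $X_1,\dots,X_m$ the matrix $\pi_i A_j \imath_\ell$ vanishes whenever $i>\ell$ — the $A_j$ are block upper triangular — while the perturbation terms $\varepsilon E_j$ contribute at most $\varepsilon$ (times an eccentricity constant) in every block. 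Now I would build an adapted norm: on each $X_i$ choose a norm $v_i$ with $c^-(v_i)=1$ that is extremal, up to scaling, for the induced action of $\calM$ on $F_i/F_{i-1}$ (i.e.\ the compression $\pi_i A \imath_i$), scaled so that this induced joint spectral radius is $\le \rho_0$ in the $v_i$ operator norm; then assemble $v$ as in Lemma~\ref{lem:normbound}/\ref{lem:normbound-eccentricity}. Applying Lemma~\ref{lem:normbound} to each $B_j$, the matrix $(r_{i\ell})$ is upper triangular with diagonal entries $\le \rho_0 + c\varepsilon$ and off-diagonal entries $\le c L$ on the strict upper triangle coming from the (bounded) coupling blocks of $A_j$, plus $O(\varepsilon)$ everywhere; here $c$ absorbs $\ecc_2(v)$, which by Lemma~\ref{lem:normbound-eccentricity} is $\max_i \ecc_2(v_i)$, independent of $\varepsilon$. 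Then $v(S) \le \norm{\prod_j (r^{(j)}_{i\ell})}_2$ is bounded by the $(1,m)$-type entry of a product of $m\times m$ upper triangular matrices whose diagonal is $\rho_0+c\varepsilon$ and whose strict-upper part is $O(1)$ in the $A$-blocks but $O(\varepsilon)$ — wait, more carefully: the coupling between $X_i$ and $X_\ell$ for $\ell>i$ already present in $\calM$ is genuinely $O(1)$, but a product of $k$ such triangular matrices has entries growing only polynomially like $k^{m-1}(\rho_0+c\varepsilon)^k$ — this is exactly the mechanism behind Lemma~\ref{lem:polynomial-growth-bound}. The key gain from the perturbation being small is that a jump "up the flag" via an $A$-block does not increase the exponential rate, only the polynomial factor, whereas only the $\varepsilon E_j$ terms can raise the spectral radius, and a path through the product of length $k$ can pick up at most... hmm, this is the delicate combinatorial count: each "level" of the flag can be crossed using the $O(1)$ coupling, but crossing back down costs a factor $\varepsilon$, so over a product of length $k$ the worst contribution to $\rho(S)$ is $(\rho_0 + O(\varepsilon^{1/m}))^k$ after optimizing how the $\varepsilon$-factors are distributed among the $m$ blocks — taking $k$-th roots and letting $k\to\infty$ gives $r(\varepsilon) \le \rho_0 + O(\varepsilon^{1/m})$. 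Finally, to pass from Hölder continuity at $0$ to the estimate on all of $[0,\eta]$, I would invoke Lemma~\ref{lem:Hoeldersimple1}, using continuity of $r$ (which follows from continuity of $\rho$ on $\calH(d)$, \cite{Bara88}, and continuity of $\varepsilon \mapsto \calM_\varepsilon$ in the Hausdorff metric).

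The main obstacle is the combinatorial bookkeeping in the block-triangular product estimate: one must show that when a length-$k$ product of upper triangular $m\times m$ matrices has diagonal $\rho_0$, bounded strict-upper entries from $\calM$, and additional $\varepsilon$-size entries everywhere, the spectral radius (not just the norm) is bounded by $(\rho_0 + C\varepsilon^{1/m})^k$. The point is that any contribution to an entry of the product that uses $p$ of the $\varepsilon$-edges and stays otherwise along the block-diagonal/bounded-coupling contributes roughly $\binom{k}{p}L^{\,p}\varepsilon^{\,p}\rho_0^{\,k-p}$-type terms, and a genuine eigenvalue of size near the rate requires descending the flag at least once per "cycle", which forces at least one $\varepsilon$-factor per full traversal of the $m$ levels; balancing $p$ against $k$ yields the exponent $1/m$. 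I would isolate this as a self-contained lemma about products of perturbed triangular matrices, proved by induction on $m$, and expect the clean statement to be that if $N = D + \varepsilon P + U$ with $D$ diagonal $\le \rho_0 I$, $U$ strictly upper triangular with $\norm{U}\le L$, $\norm{P}\le L$, then $\rho$ of any length-$k$ product is $\le (\rho_0 + C_{m,L}\,\varepsilon^{1/m})^k$; the rest of the argument is routine given the norm machinery already developed.
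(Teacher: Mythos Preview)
Your setup matches the paper's: the maximal flag, the orthogonal complements $X_i$, the block decomposition, the choice of extremal norms $v_i$ on the diagonal blocks assembled into $v$ via Lemma~\ref{lem:normbound}, and the reduction of the full $[0,\eta]$ statement to Hölder continuity at $0$ via Lemma~\ref{lem:Hoeldersimple1}. The gap is precisely the step you flag as ``the main obstacle'': you state the correct target lemma (products of $m\times m$ matrices that are $\rho_0$ on the diagonal, $O(1)$ strictly above, and $O(\varepsilon)$ strictly below have joint spectral radius $\le \rho_0 + C\varepsilon^{1/m}$), but you do not prove it. The path-counting heuristic ``one $\varepsilon$-factor per full traversal of the $m$ levels'' is suggestive but not an argument, and the proposed induction on $m$ is only a plan.

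The paper bypasses the combinatorics entirely with a one-line idea you are missing: conjugate by the $\varepsilon$-dependent diagonal similarity
\[
T_\varepsilon = \diag(I_1,\delta I_2,\ldots,\delta^{m-1}I_m),\qquad \delta=\varepsilon^{1/m}.
\]
Under $T_\varepsilon^{-1}(\cdot)T_\varepsilon$ the $(i,j)$ block picks up a factor $\delta^{j-i}$, so the bounded upper-triangular blocks become $O(\varepsilon^{1/m})$ while the $\varepsilon$-size lower-triangular blocks become at most $\varepsilon\,\delta^{-(m-1)}=\varepsilon^{1/m}$. Now Lemma~\ref{lem:normbound} gives $v(T_\varepsilon^{-1}AT_\varepsilon)\le \|Q(\varepsilon)\|_2$ for every $A\in\calM_\varepsilon$, where $Q(\varepsilon)-Q(0)$ has all entries $O(\varepsilon^{1/m})$; hence $\rho(\calM_\varepsilon)=\rho(T_\varepsilon^{-1}\calM_\varepsilon T_\varepsilon)\le \|Q(\varepsilon)\|_2\le \rho(\calM)+C\varepsilon^{1/m}$. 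No products, no path-counting, no Berger--Wang. Your intermediate lemma is in fact true, and this scaling is its proof; once you see it, the rest of your argument collapses to two lines.
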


\begin{proof} 
It is clear that $r$ is increasing.
For reducibility index $m=1$ we are in the irreducible case and the result follows from \cite{Wirt02}.

Let $\mathcal{M}$ have reducibility index $m \geq 2$. 
It is sufficient to prove the result for one specific norm. 
Given two norms $v_1$ and $v_2$ on $\K^{d\times d}$, we use the upper index $v_i$ to denote the objects defined in \eqref{eq:def:BV}, \eqref{eq:def:Meps} depending on the norm $v_i$.
By the equivalence of norms, for given norms $v_1$ and $v_2$ there is a constant $D$ such that $\varepsilon \calB_V^{v_2}\subseteq D\varepsilon \calB^{v_1}_{V}$ and so $\rho(\mathcal{M}^{v_2}_\varepsilon) \leq \rho(\mathcal{M}^{v_1}_{D\varepsilon})$.
Thus if the result is known for $v_1$ with a constant $C_\eta$ on the interval $\eta$, then we have on $[0,\eta/D]$ that
\begin{equation}
\label{eq:D-bound-in-prop-hoel1}
    \rho(\mathcal{M}^{v_2}_\varepsilon) - \rho(\mathcal{M}) \leq \rho(\mathcal{M}^{v_1}_{D\varepsilon}) - \rho(\mathcal{M}) \leq C_\eta D^{1/m} \varepsilon^{1/m}.
\end{equation}

We thus begin by fixing a suitable norm.
Let $(F_0,\ldots,F_m)$ be a flag corresponding to the reducibility index of $\mathcal{M}$ and choose pairwise orthogonal spaces $X_1,\ldots,X_m$ such that
\begin{equation}
    F_{i-1} \oplus X_i = F_{i}, \quad i=1,\ldots,m.
\end{equation}
As before let $\pi_i$ be the orthogonal projection $\pi_i:\K^d\to X_i$ and $\imath_i:X_i\to\K^d$ the canonical injection. We define
\begin{equation}
    \calM_{ij} := \left\{ \pi_i A \imath_j \setsep A \in \calM \right\}, \quad 1 \leq i,j \leq m.
\end{equation}

The sets $\mathcal{M}_{ii}$ of the restrictions of $\pi_i A$ to $X_i$, $A\in \mathcal{M}$, are irreducible or equal to $\{0\}$ and so, \cite{Bara88,Wirt02}, we may choose Barabanov norms $v_i$ for $\mathcal{M}_{ii}$, $i=1,\ldots,m$. In particular, we have (see also \cite[Lemma 2]{BergWang92})
\begin{equation}
    \rho(\mathcal{M}) = \max_{i=1,\ldots,m}\rho(\mathcal{M}_{ii}) = \max_{i=1,\ldots,m} v_i(\mathcal{M}_{ii}).
\end{equation}
We now define the norm $v$ as in \eqref{eq:vdef}. 
Again denote by $v_{ij}$ the induced operator norm from $(X_j, v_j)$ to $(X_i,v_i)$.
By similarity scaling we may assume without loss of generality that
\begin{equation*}
    v_{ij}(\mathcal{M}_{ij}) \leq 1, \quad 1 \leq i< j \leq m.
\end{equation*}
On the other hand of course $\mathcal{M}_{ij} =\{0\}$ for $i>j$.

It suffices to show that there exist $\varepsilon_0>0$ and $C>0$ such that for all $\varepsilon \in (0,\varepsilon_0]$ we have 
\begin{equation*}
    \rho(\mathcal{M}_\varepsilon) - \rho(\mathcal{M}) \leq C \varepsilon^{1/m}.
\end{equation*}
The full claim then follows from Lemma~\ref{lem:Hoeldersimple1}.

Using the properties of $v$, for $\varepsilon >0$ we then have for the blocks of the matrices $A\in \mathcal{M}_\varepsilon$ that
\begin{equation}
    v_{ij}(\mathcal{M}_{\varepsilon,ij}) \leq \left\{
    \begin{matrix}
    \rho(\mathcal{M}_{ii})+ \varepsilon& \quad& \text{ for } i=j, \\
    1+\varepsilon& \quad& \text{ for } i<j, \\
    \varepsilon& \quad& \text{ for } i>j.
    \end{matrix}\right.
\end{equation}
As the joint spectral radius is invariant under similarity transformation, we can now rescale via a diagonal transformation of the form
\begin{equation*}
    T_\varepsilon = \diag ( I_1,  \delta I_2, \ldots, \delta^{m-1} I_m),
\end{equation*}
where $I_j$ is the identity matrix of dimension $\dim X_j$, $j=1,\ldots,m$ and
\begin{equation}
    \delta = \sqrt[m]{\varepsilon}.
\end{equation}

Using Lemma~\ref{lem:normbound} we obtain for any matrix in 
$A \in T_\varepsilon^{-1} \mathcal{M}_\varepsilon T_\varepsilon$ that
\begin{equation}
    v(A) \leq \norm{
    \begin{bmatrix}
    \rho(\mathcal{M}_{11}) + \varepsilon & (1+\varepsilon) \varepsilon^{1/m}& \ldots  & (1+\varepsilon)\varepsilon^{(m-1)/m} \\
    \varepsilon^{(m-1)/m} &&&\\
    \vdots &&&\\
     \varepsilon^{1/m} &&&  \rho(\mathcal{M}_{mm}) + \varepsilon
    \end{bmatrix}
    }_2.
\end{equation}
Denoting the matrix on the right by $Q(\varepsilon) = ( q_{ij}(\varepsilon))_{i,j=1}^m$, we see that the diagonal entries of $Q(\varepsilon)$ are of the form
\begin{equation*}
    q_{jj}(\varepsilon) =  \rho(\mathcal{M}_{jj}) + \varepsilon, \quad j=1,\ldots,m,
\end{equation*}
while we have for the off-diagonal entries 
that
\begin{equation*}
    0\leq q_{ij}(\varepsilon) \leq 2\varepsilon^{1/m}
    , \quad i\neq j,
\end{equation*}
for $\varepsilon \in [0,1]$.
We also note that $\rho(\mathcal{M}) = \max_{i=1,\ldots,m} \rho(\mathcal{M}_{ii}) = \norm{Q(0)}_2$.
Consequently, we have
\begin{align*}
    \rho(\mathcal{M}_\varepsilon) - \rho(\mathcal{M})
    &=\rho(T_\varepsilon^{-1}\mathcal{M}_\varepsilon T_\varepsilon) - \rho(\mathcal{M}) \\
    &\leq
    v(T_\varepsilon^{-1}\mathcal{M}_\varepsilon T_\varepsilon) - \rho(\mathcal{M})\\
    &\leq \norm{Q(\varepsilon)}_2 - \norm{Q(0)}_2 \leq
    \norm{ Q(\varepsilon) - Q(0)}_2 \leq C \varepsilon^{1/m}
\end{align*}
for all $0\leq \varepsilon \leq 1$ and a suitable positive constant $C$.
\end{proof}

\begin{rem}
  The constant $C_\eta$ asserted by \Cref{prop:hoel1} can be determined more accurately with the help of \Cref{lem:normbound-eccentricity}. On the one hand the choice of the constant $C$ bounding $\norm{Q(\varepsilon)-Q(0)}_2$ in the proof is quite generic and only marginally involves the problem data. The extension to $[0,\eta]$ by \Cref{lem:Hoeldersimple1} is also straightforward. On the other hand, we have in \eqref{eq:D-bound-in-prop-hoel1} that $D=\ecc(v)$ where $\ecc(v)$ is the eccentricity of the chosen norm $v$ with respect to our original norm of interest. If the latter happens to be the spectral norm, then \Cref{lem:normbound-eccentricity} gives a fairly good understanding of what $D$ is. In this case the results of \cite{KOZYAKIN201012} can be used to obtain easier estimates for the constants involved.
\end{rem}

To extend \Cref{prop:hoel1} to a statement of local Hölder continuity, we need a convexity argument. To this end,
recall that the joint spectral radius is invariant under the operations of taking the convex hull and balancing, see \cite{Bara88,Jungers}. We denote the convex hull of a set $Z\subseteq \K^{d\times d}$ by $\mathrm{conv}(Z)$.
\begin{lem}[\cite{Bara88}]
\label{lem:conv-balan}
Let $\mathcal{M}\subseteq \K^{d\times d}$ be compact. Then
\begin{equation*}
\rho \mathcal{(M)}=\rho\left(\mathrm{conv}\left(\mathcal{M}\cup-\mathcal{M}\right)\right)\,.
\end{equation*}
\end{lem}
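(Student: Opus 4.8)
The plan is to read the statement off the Rota--Strang description of the joint spectral radius as an infimum of operator norms, $\rho(\calM)=\inf\{\norm{\calM}\setsep \norm{\cdot}\text{ an operator norm}\}$, recalled at the beginning of Section~\ref{sec:preliminaries}. The point is simply that an operator norm does not distinguish between a set, its balanced hull, and its convex hull.

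First I would fix an arbitrary operator norm $\norm{\cdot}$ on $\K^{d\times d}$ and use that $A\mapsto\norm{A}$ is convex and satisfies $\norm{-A}=\norm{A}$. If $\norm{A}\le c$ for every $A\in\calM$, then for any convex combination $\sum_j\lambda_j(\pm A_j)$ with $A_j\in\calM$ one has $\norm{\sum_j\lambda_j(\pm A_j)}\le\sum_j\lambda_j\norm{A_j}\le c$, so $\norm{B}\le c$ for all $B\in\mathrm{conv}(\calM\cup-\calM)$. Taking $c=\norm{\calM}$ and using the trivial inclusion $\calM\subseteq\mathrm{conv}(\calM\cup-\calM)$ gives
\begin{equation*}
  \norm{\calM}=\norm{\mathrm{conv}(\calM\cup-\calM)}
\end{equation*}
for every operator norm.

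Next, $\calM\cup-\calM$ is compact and, since $\K^{d\times d}$ is finite-dimensional, so is $\mathrm{conv}(\calM\cup-\calM)$ (Carathéodory); hence the infimum formula for $\rho$ applies to both sets. Taking the infimum over all operator norms in the displayed identity yields
\begin{equation*}
  \rho(\calM)=\inf_{\norm{\cdot}}\norm{\calM}=\inf_{\norm{\cdot}}\norm{\mathrm{conv}(\calM\cup-\calM)}=\rho(\mathrm{conv}(\calM\cup-\calM)),
\end{equation*}
which is the claim.

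There is no real obstacle here; the only things to verify are the compactness of the convex hull (so that the infimum formula is literally applicable) and the convexity and evenness of operator norms. If one prefers an argument from the definition \eqref{eq:jsr} that does not invoke the infimum formula, multilinearity of matrix multiplication gives $\calS_k(\mathrm{conv}(\calM\cup-\calM))\subseteq\mathrm{conv}(\calS_k(\calM)\cup-\calS_k(\calM))$, and since the supremum of the convex function $\norm{\cdot}$ over a compact set agrees with its supremum over the convex balanced hull of that set, $\sup_{S\in\calS_k(\mathrm{conv}(\calM\cup-\calM))}\norm{S}=\sup_{S\in\calS_k(\calM)}\norm{S}$ for every $k$; taking $k$-th roots and passing to the limit finishes the proof. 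Either route is short.
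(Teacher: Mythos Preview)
The paper does not supply a proof of this lemma; it is quoted from \cite{Bara88} (and \cite{Jungers}) as a known fact. Your argument via the Rota--Strang infimum formula is correct and is exactly the kind of short proof one would expect here: convexity and evenness of operator norms give $\norm{\calM}=\norm{\mathrm{conv}(\calM\cup-\calM)}$ for every operator norm, and taking the infimum finishes. Your alternative route through the definition \eqref{eq:jsr} is also fine; the inclusion $\calS_k(\mathrm{conv}(\calM\cup-\calM))\subseteq\mathrm{conv}(\calS_k(\calM)\cup-\calS_k(\calM))$ holds because expanding a product of convex combinations of $\pm A_j$'s yields a convex combination of signed products, each sign being absorbed into the balanced hull.
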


With this we obtain the main result of this section.

\begin{thm}
\label{theo:Hölder}
Let $\mathcal{M}\subseteq \K^{d\times d}$ be compact and of reducibility index $m\geq 1$. Consider the set $\calB_V$ defined in \eqref{eq:def:BV} and the increasing map in \eqref{eq:def:Meps}
\begin{equation*}
\varepsilon \mapsto \mathcal{M}_{\varepsilon}:=\mathcal{M}+\varepsilon \calB_{V}, \quad \varepsilon\geq 0\,.
\end{equation*}
Then the map $r:[0,\infty)\rightarrow \R$
\begin{equation*}
r(\varepsilon):=\rho(\mathcal{M}_{\varepsilon})
\end{equation*}
is locally $\frac{1}{m+1}$-Hölder continuous, if $m\geq 2$, and locally Lipschitz continuous if $m=1$.
\end{thm}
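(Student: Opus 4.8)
The strategy is to bootstrap the pointwise Hölder estimate of \Cref{prop:hoel1} (valid at $\varepsilon = 0$ with exponent $1/m$) to a statement that is Hölder at \emph{every} point $\varepsilon_0 > 0$, and then upgrade the exponent slightly at the cost of passing from $1/m$ to $1/(m+1)$ — this extra loss is what buys uniformity. The key structural fact is that $r$ is increasing and, more importantly, that a suitable power of $r$ is convex. Recall from \Cref{lem:conv-balan} that $\rho(\calM_\varepsilon) = \rho(\mathrm{conv}(\calM_\varepsilon \cup -\calM_\varepsilon))$, and note that $\mathrm{conv}((\calM + \varepsilon\calB_V)\cup(-\calM - \varepsilon\calB_V))$ depends affinely on $\varepsilon$ in the sense of Minkowski combinations: writing $\calN := \mathrm{conv}(\calM \cup -\calM)$ (which is convex and balanced) and $\calC := \mathrm{conv}(\calB_V \cup -\calB_V) = \calB_V$ (already convex and balanced), one has $\mathrm{conv}(\calM_\varepsilon \cup -\calM_\varepsilon) = \calN + \varepsilon\calC$. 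Since the joint spectral radius is known to be such that $\rho(\calN_1 + \calN_2)$ is subadditive in an appropriate sense and, crucially, $\log \rho$ of a Minkowski-interpolated family of convex balanced sets is convex in the interpolation parameter — or, more elementarily, $r$ itself is convex as a function of $\varepsilon$ because $\calN + \varepsilon\calC = (1-t)(\calN + \varepsilon_0\calC) + t(\calN + \varepsilon_1\calC)$ when $\varepsilon = (1-t)\varepsilon_0 + t\varepsilon_1$ and $\rho$ is subadditive under convex combinations of convex balanced sets. So the first step is to establish that $r$ is convex and increasing on $[0,\infty)$.

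Given convexity and monotonicity of $r$, the second step is the standard real-analysis observation that a convex increasing function which is $\alpha$-Hölder at one point from above propagates a Hölder bound nearby. Concretely: fix a compact interval $[0,\eta]$ and $\varepsilon_0 \in (0,\eta)$. For an upper bound on $r(\varepsilon_0 + h) - r(\varepsilon_0)$ with $h > 0$ small, use convexity to compare the slope on $[\varepsilon_0, \varepsilon_0 + h]$ with the slope on a fixed larger interval, say $[\varepsilon_0, \eta]$, which gives a \emph{Lipschitz} upper bound $r(\varepsilon_0+h) - r(\varepsilon_0) \leq \frac{r(\eta) - r(\varepsilon_0)}{\eta - \varepsilon_0} h$ — with the slope controlled uniformly since $r$ is bounded on $[0,2\eta]$. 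For the lower bound on $r(\varepsilon_0 + h) - r(\varepsilon_0)$ — equivalently an upper bound on $r(\varepsilon_0) - r(\varepsilon_0 - h)$, the genuinely delicate direction — use convexity to push the comparison down to the endpoint $0$: the secant slope on $[0,\varepsilon_0]$ dominates the secant slope on $[\varepsilon_0 - h, \varepsilon_0]$, so $r(\varepsilon_0) - r(\varepsilon_0 - h) \leq \frac{r(\varepsilon_0) - r(0)}{\varepsilon_0} h$, and by \Cref{prop:hoel1} we have $r(\varepsilon_0) - r(0) \leq C_\eta \varepsilon_0^{1/m}$, so the slope bound is $C_\eta \varepsilon_0^{1/m - 1} h$. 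This blows up as $\varepsilon_0 \to 0$, but only like $\varepsilon_0^{-(m-1)/m}$, and near $0$ we still have the direct estimate from \Cref{prop:hoel1}. Interpolating between the "near $0$" regime (use \Cref{prop:hoel1} directly: $|r(\varepsilon_0 \pm h) - r(\varepsilon_0)| \lesssim \varepsilon_0^{1/m} + (\varepsilon_0+h)^{1/m} \lesssim h^{1/m}$ when $h \gtrsim \varepsilon_0$) and the "away from $0$" regime (Lipschitz with constant $\sim h \cdot \varepsilon_0^{-(m-1)/m}$, used when $h \lesssim \varepsilon_0$) and optimizing the crossover — balancing $h^{1/m}$ against $h\,\varepsilon_0^{-(m-1)/m}$ — yields exactly the exponent $\frac{1}{m+1}$: one checks that the worst case over the choice of $\varepsilon_0$ for fixed increment $h$ is $h^{1/(m+1)}$. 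For $m = 1$, \Cref{prop:hoel1} already gives a Lipschitz bound at $0$, the slope bound near $0$ does not blow up, and $r$ is globally Lipschitz on compacta, recovering the last clause.

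The main obstacle I anticipate is pinning down the convexity of $r$ cleanly. Subadditivity/convexity of $\rho$ under Minkowski combinations of convex balanced matrix sets is a known but slightly folklore fact (it follows from the operator-norm characterization $\rho(\calM) = \inf_{\|\cdot\|}\|\calM\|$ together with the triangle inequality for $\|\cdot\|$, applied after the convexification of \Cref{lem:conv-balan}); one must be careful that the infimum over norms interacts correctly with the convex combination, i.e. show $\|(1-t)\calN_0 + t\calN_1\| \le (1-t)\|\calN_0\| + t\|\calN_1\|$ for every operator norm, take infima, and invoke \Cref{lem:conv-balan} to legitimize replacing $\calM_\varepsilon$ by its convex balanced hull. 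The second obstacle is purely bookkeeping: carrying the constant $C_\eta$ from \Cref{prop:hoel1} through the two-regime interpolation and verifying that it can be chosen uniformly over $\varepsilon_0 \in [0,\eta]$, which is where the "local" (compact-interval) qualifier in the statement comes from. Both are routine once the convexity lemma is in hand; the exponent arithmetic giving $1/(m+1)$ from $1/m$ is the conceptual heart.
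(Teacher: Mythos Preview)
Your two-regime interpolation strategy is exactly what the paper does, and the exponent arithmetic leading to $\tfrac{1}{m+1}$ is the right endgame. But the structural input you rely on---convexity of $r$---is false, and this is a genuine gap, not a bookkeeping issue.

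\textbf{Convexity fails.} Take $d=2$, $\calM=\{N\}$ with $N=\begin{pmatrix}0&1\\0&0\end{pmatrix}$, and $V=\K^{2\times 2}$. The reducibility index is $m=2$. Since $N+\varepsilon E_{21}\in\calM_\varepsilon$ has spectral radius $\sqrt{\varepsilon}$, we get $r(\varepsilon)\geq\sqrt{\varepsilon}$, while \Cref{prop:hoel1} gives $r(\varepsilon)\leq C\sqrt{\varepsilon}$; so $r(\varepsilon)\asymp\sqrt{\varepsilon}$, which is strictly concave. Your operator-norm argument for convexity breaks precisely where you flagged it: from $\|(1-t)\calN_0+t\calN_1\|\leq (1-t)\|\calN_0\|+t\|\calN_1\|$ for each norm, taking the infimum over norms gives
\[
\rho\bigl((1-t)\calN_0+t\calN_1\bigr)\ \leq\ \inf_{\|\cdot\|}\bigl[(1-t)\|\calN_0\|+t\|\calN_1\|\bigr],
\]
and the right-hand side is \emph{at least} $(1-t)\rho(\calN_0)+t\rho(\calN_1)$, not at most. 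Moreover, even granting convexity, the secant comparison you invoke goes the wrong way: for convex $r$ the slope on $[\varepsilon_0-h,\varepsilon_0]$ \emph{exceeds} the slope on $[0,\varepsilon_0]$, so you would get a lower bound on $r(\varepsilon_0)-r(\varepsilon_0-h)$, not the upper bound you need.

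\textbf{What the paper uses instead.} After replacing $\calM$ by its convex balanced hull (legitimate by \Cref{lem:conv-balan}), the paper observes the set inclusion
\[
\frac{\varepsilon}{\varepsilon+\delta}\,\calM_{\varepsilon+\delta}\ \subseteq\ \calM_\varepsilon,
\]
because $\tfrac{\varepsilon}{\varepsilon+\delta}M\in\calM$ (convexity and $0\in\calM$) and $\tfrac{\varepsilon}{\varepsilon+\delta}(\varepsilon+\delta)B=\varepsilon B\in\varepsilon\calB_V$. Positive homogeneity of $\rho$ then yields
\[
r(\varepsilon+\delta)-r(\varepsilon)\ \leq\ \frac{\delta}{\varepsilon}\,r(\varepsilon)\ \leq\ \frac{\delta}{\varepsilon}\,r(\eta).
\]
This is a scaling inequality (equivalently, $\varepsilon\mapsto r(\varepsilon)/\varepsilon$ is nonincreasing), not a convexity statement. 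With it in hand, your two-case split works verbatim: for $\varepsilon\geq\delta^{m/(m+1)}$ use the display above to get $\leq r(\eta)\,\delta^{1/(m+1)}$; for $\varepsilon\leq\delta^{m/(m+1)}$ use \Cref{prop:hoel1} directly, $r(\varepsilon+\delta)-r(\varepsilon)\leq C_\eta(\varepsilon+\delta)^{1/m}\lesssim\delta^{1/(m+1)}$. The $m=1$ clause is immediate from local Lipschitz continuity on $\calH_I(d)$.
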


\begin{proof}
The irreducible case $m=1$ follows from \cite{Wirt02}. Let $m\geq 2$.

Fix $\eta >0$. We will show that $r$ is $\frac{1}{m+1}$-Hölder on $[0,\eta]$.
By Proposition~\ref{prop:hoel1} we know that $r$ is $\frac{1}{m}$-Hölder at $\varepsilon=0$. 

By Lemma~\ref{lem:conv-balan} we may assume that $\mathcal{M}$ is convex and balanced, because
\begin{multline*}
 r(\varepsilon) = \rho(\calM + \varepsilon \calB_V) = \rho(\mathrm{conv}((\mathcal{M} + \varepsilon \calB_V) \cup - (\mathcal{M} + \varepsilon \calB_V))) 
\\
= \rho(\mathrm{conv}((\mathcal{M}  \cup - \mathcal{M}) + \varepsilon \calB_V)) = 
\rho(\mathrm{conv}(\mathcal{M}  \cup - \mathcal{M}) + \varepsilon \calB_V).
\end{multline*}
This implies, in particular, that $\mathcal{M}_{\varepsilon}$ is convex and balanced for all $\varepsilon\geq 0$, as $\calB_{V}$ is convex and balanced.

Fix $\varepsilon >0, \delta >0$, so that $\varepsilon+\delta \leq \eta$. We claim that
\begin{equation}
\label{eq:eps_delta1}
0\leq r(\varepsilon+\delta)-r(\varepsilon)\leq\frac{\delta}{\varepsilon}\, r(\varepsilon) \leq\frac{\delta}{\varepsilon}\, r(\eta)\,.
\end{equation}
The first and the last inequality are immediate as
$\mathcal{M}_{\varepsilon}\subsetneq\mathcal{M}_{\varepsilon
  +\delta}\subseteq \mathcal{M}_{\eta}$. For the second consider
$A= M + (\varepsilon+\delta)B\in \mathcal{M}_{\varepsilon +\delta}$
with $M\in \mathcal{M}$, $B\in \calB_V$.  As $\mathcal{M}$ is convex
and balanced we have
$\frac{\varepsilon}{\varepsilon +\delta}M = (\varepsilon+\delta)^{-1}
(\varepsilon M + \delta\hspace*{1pt} 0_{d\times d}) \in \mathcal{M}$
and so
$\frac{\varepsilon}{\varepsilon +\delta}A =
\frac{\varepsilon}{\varepsilon +\delta}M + \varepsilon B \in
\mathcal{M}_{\varepsilon}$. In particular, for any sequence
$A_1,\ldots, A_k\in \mathcal{M}_{\varepsilon +\delta}$, we have
\begin{equation*}
  \norm{A_k\cdots A_1}_2^{\frac{1}{k}}=\frac{\varepsilon +\delta}{\varepsilon}\norm{\left(\frac{\varepsilon}{\varepsilon +\delta}\right)A_k\cdots\left(\frac{\varepsilon}{\varepsilon +\delta}\right)A_1}_2^{\frac{1}{k}}
\end{equation*}
and taking the supremum and the limit as $k\rightarrow\infty$ we see
\begin{equation*}
  r(\varepsilon +\delta)=\rho\left(\mathcal{M}_{\varepsilon +\delta}\right)\leq\frac{\varepsilon +\delta}{\varepsilon}\rho\left(\mathcal{M}_{\varepsilon}\right)=\frac{\varepsilon +\delta}{\varepsilon}r(\varepsilon)\,.
\end{equation*}
The second inequality in \eqref{eq:eps_delta1} is now immediate.

Furthermore, for the given $\eta >0$, we have by Proposition~\ref{prop:hoel1} that there exists a constant $C_{\eta}>0$ such that
\begin{equation}
\label{eq:eps-delta2}
0\leq r(\varepsilon +\delta)-r(\varepsilon)\leq r(\varepsilon +\delta)-r(0)\leq C_{\eta}(\varepsilon+\delta)^{1/m}\,.
\end{equation}
In the following, we distinguish two cases:\\
(i) If $\delta^\frac{m}{m+1}\leq\varepsilon$, then we obtain from \eqref{eq:eps_delta1} that
	\begin{equation*}
	0\leq r(\varepsilon +\delta) - r(\varepsilon)\leq \frac{\delta}{\varepsilon} r(\eta)\leq \delta \delta^{-m/(m+1)}r(\eta)=\delta^{1/(m+1)}r(\eta)\,.
	\end{equation*}
(ii) If $0<\varepsilon\leq\delta^{m/(m+1)}$, then we may use \eqref{eq:eps-delta2} to get
	\begin{align*}
	0\leq r(\varepsilon + \delta)-r(\varepsilon)\leq C_{\eta}(\varepsilon +\delta)^{\frac{1}{m}}&\leq C_{\eta}\left(\delta^{m/(m+1)}+\delta\right)^{1/m}\\&\leq C_{\eta}\sqrt[m]{1+\eta^{1/(m+1)}}\delta^{1/(m+1)}\,.
	\end{align*}
This shows that $r$ is $\alpha$-Hölder continuous on $[0,\eta]$ with $\alpha=\frac{1}{m+1}$ and constant $C=\max\left\{r(\eta),C_{\eta}\sqrt[m]{1+\eta^{1/(m+1)}}\right\}$.
\end{proof}

\section{Pointwise Hölder continuity}
\label{sec:pointwise}

The aim of this section is to present our results on pointwise Hölder continuity of the joint spectral radius. This concerns the proof of the statement that the truth of Conjecture P2 would imply Conjecture P1. Less hypothetically, we will show pointwise Hölder continuity of the joint spectral radius with the less than expected constant $\alpha=\frac{1}{d+d^2}$. Two situations are closer to the ideal case: if $\rho(\calM)=0$, then the constant $\frac{1}{d}$ is achieved, and if  $\calM$ is finite, we obtain the nearly optimal $\frac{1}{d+\varepsilon}$ for all $\varepsilon>0$.

For our estimates we need the following elementary lemmata.
\begin{lem}
  \label{lem:exp-estimate}
    For all $x \in [0,1]$ and $k \geq 1$ we have
    $(1+ xk^{-1})^k \leq 1+2x$.
  \end{lem}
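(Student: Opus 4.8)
The plan is to pass through the exponential function: first bound $(1+xk^{-1})^k$ from above by $e^{x}$, and then bound $e^{x}$ from above by the affine function $1+2x$ on the interval $[0,1]$.

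For the first step I would use the elementary inequality $\ln(1+t)\leq t$, valid for all $t>-1$, applied with $t=xk^{-1}$. Exponentiating $k\ln(1+xk^{-1})\leq x$ gives $(1+xk^{-1})^{k}\leq e^{x}$ for every $x\geq 0$ and every real $k\geq 1$; in particular nothing is lost by allowing $k$ to range only over integers. If one wishes to avoid the logarithm entirely, then for integer $k$ the same bound follows from the binomial theorem together with the estimate $\binom{k}{j}k^{-j}\leq 1/j!$, which gives $(1+xk^{-1})^{k}\leq\sum_{j=0}^{k}x^{j}/j!\leq e^{x}$.

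For the second step I would invoke convexity of $t\mapsto e^{t}$: on $[0,1]$ its graph lies below the chord joining $(0,1)$ and $(1,e)$, so $e^{x}\leq(1-x)\cdot 1+x\cdot e=1+(e-1)x$ for $x\in[0,1]$. Since $e-1<2$, this yields $e^{x}\leq 1+2x$ on $[0,1]$, and chaining the two estimates gives $(1+xk^{-1})^{k}\leq 1+2x$, as claimed.

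There is no genuine obstacle here; the only point requiring care is the role of the hypothesis $x\in[0,1]$, which is precisely what makes the linear majorant $1+2x$ valid (it fails for large $x$, where $e^{x}$ outgrows every affine function). One could also give a fully self-contained argument avoiding $e^{x}$ altogether, by expanding $\sum_{j=0}^{k}x^{j}/j!=1+x\sum_{j\geq 1}x^{j-1}/j!\leq 1+x\sum_{j\geq 1}1/j!=1+(e-1)x\leq 1+2x$ for $x\in[0,1]$.
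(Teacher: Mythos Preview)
Your proof is correct and follows essentially the same approach as the paper: bound $(1+x/k)^k$ by $e^x$ via $1+t\leq e^t$, and then use $e^x\leq 1+2x$ on $[0,1]$. The paper states the second inequality without justification, whereas you supply the convexity argument giving the sharper bound $e^x\leq 1+(e-1)x$, but the structure is identical.
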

  \begin{proof}
    For all $x \geq 0$ we have
      $1+x/k \leq e^{x/k} $.
    It follows that for $x \in [0,1]$
  \begin{align*}
      \left(1+\frac{x}{k}\right)^k &\leq e^{x} \leq 1+2x.  \qedhere
  \end{align*}
  \end{proof}

\begin{lem}
  \label{lem:two-sequences}
  Let $\{a_n\}_{n\in\N}, \{b_n\}_{n\in\N}$ be two sequences of positive
  numbers such that $a_n> b_n$, $n\in\N$ and
  \[\liminf_{n \to \infty} nb_n^{n-1} > 1.\]
  Then for all sufficiently large $n$ it holds that $a_n^k - b_n^k \geq a_n-b_n$ for all $k \in \{1,\dots,n\}$.
\end{lem}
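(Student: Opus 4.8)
The plan is to use the elementary factorization
\[
a_n^k - b_n^k = (a_n - b_n)\sum_{j=0}^{k-1} a_n^{k-1-j} b_n^{j}
\ge (a_n - b_n)\, k\, b_n^{k-1},
\]
valid since $a_n > b_n > 0$ implies every term $a_n^{k-1-j}b_n^j \ge b_n^{k-1}$. Thus it suffices to show that $k\, b_n^{k-1} \ge 1$ for all $k \in \{1,\dots,n\}$ once $n$ is large enough. For $k=1$ this is $1 \ge 1$, which always holds. For $2 \le k \le n$ the quantity $k\, b_n^{k-1}$ need not be monotone in $k$ in general, so I would split into the two regimes $b_n \ge 1$ and $b_n < 1$ and argue that the minimum over $k$ is controlled.

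\textbf{Main argument.} By hypothesis there is $\theta > 1$ and $n_0$ with $n\, b_n^{n-1} \ge \theta$ for all $n \ge n_0$. First, if $b_n \ge 1$, then $k\, b_n^{k-1} \ge k \ge 1$ for every $k \ge 1$, so there is nothing to prove. Now suppose $b_n < 1$; then $t \mapsto b_n^{t-1}$ is decreasing while $t \mapsto t$ is increasing, and on the integer range $k \in \{1,\dots,n\}$ the function $k \mapsto k\, b_n^{k-1}$ is (log-)concave, so its minimum over this range is attained at one of the endpoints $k=1$ or $k=n$. At $k=1$ the value is $1$, and at $k=n$ the value is $n\, b_n^{n-1} \ge \theta > 1$ by assumption. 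Hence $k\, b_n^{k-1} \ge \min\{1,\theta\} = 1$ for all $k \in \{1,\dots,n\}$ and all $n \ge n_0$. Combining with the factorization above gives $a_n^k - b_n^k \ge (a_n - b_n)\, k\, b_n^{k-1} \ge a_n - b_n$ for all $k \in \{1,\dots,n\}$ and all $n \ge n_0$, which is the claim.

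\textbf{Where the care is needed.} The only genuine subtlety is the claim that $k \mapsto k\, b_n^{k-1}$ attains its minimum over $\{1,\dots,n\}$ at an endpoint when $0 < b_n < 1$. The clean way to see this is to observe that $g(t) := \log t + (t-1)\log b_n$ has $g''(t) = -1/t^2 < 0$, so $g$ is strictly concave on $(0,\infty)$; a strictly concave function on an interval attains its minimum over any subinterval at one of the two endpoints, and the same is therefore true for the restriction to the integers $\{1,\dots,n\}$. (Alternatively, one checks directly that if $k\,b_n^{k-1} < 1$ for some interior $k$, then since the sequence is unimodal it would have to stay below $1$ all the way to $k=n$, contradicting $n\,b_n^{n-1} > 1$.) I expect this concavity/unimodality observation to be the main point to state carefully; everything else is the telescoping bound and a case split, both routine.
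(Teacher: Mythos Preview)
Your proof is correct and essentially identical to the paper's: both reduce to showing $k\,b_n^{k-1}\ge 1$ via the same factorization, then argue that $g(t)=\log t+(t-1)\log b_n$ attains its minimum over $[1,n]$ at an endpoint (you via concavity $g''(t)=-1/t^2<0$, the paper via the equivalent observation that $g$ is unimodal with maximum at $-(\log b_n)^{-1}$ when $b_n<1$ and increasing when $b_n\ge 1$). The case split and the endpoint values $g(1)=0$, $g(n)=\log(n\,b_n^{n-1})>0$ are handled the same way in both.
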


\begin{proof}
  We start by showing that
  for sufficiently large $n$ we have
  $kb_n^{k-1} \geq 1$ for all $k \in \{1,\dots,n\}$.
  Let $c >0$ and consider the function
  \[g(k) = \log (kc^{k-1}) = \log k + k\log c-\log c, \quad k \in (0,\infty).\]
  If $c\geq 1$, then $g$ is clearly increasing. For $0<c<1$, $g$ is unimodal with maximum at $- (\log c)^{-1}$.
  In both cases it follows that
  $g(k)\geq \min \{g(1), g(n)\}$ for all $k \in [1,n]$.
  This shows for all $c>0$ that $kc^{k-1} \geq \min\{1,nc^{n-1}\}$ for all
  $k \in [1,n]$.
  Using the condition that $nb_n^n >1$
  for sufficiently large $n$, there exists an $n_0 \in\N$ such that for all $n\geq n_0$ we have $kb_n^k \geq 1$ for all $k \in
  \{1,\dots,n\}$.

  Finally, for $n\geq n_0$ and $k \in \{1,\dots,n\}$
  \begin{align*}
    a_n^k - b_n^k &= (a_n - b_n)(a_n^{k-1} + a_{n}^{k-2}b_{n} +
                    \cdots + a_1b_{n}^{k-2}+b_n^{k-1}) \\
                  &\geq (a_n-b_n) k b_n^{k-1} \\
                  &\geq a_n-b_n. \qedhere
  \end{align*}
 \end{proof}

 The following lemma gives bounds for the deviation of perturbed products of finite length.

\begin{lem}
  \label{lem:norm-bound-for-perturbed-product}
  Let $\calM \in \calH(d)$ be a compact set of matrices
  with $\rho(\calM)=1$.
  Let $\Theta$ be the constant from Lemma~\ref{lem:polynomial-growth-bound} for
  the matrix set $\calM$.
  Then
  \begin{align*}
    \norm{(A_k+\varepsilon B_k)\cdots(A_1+\varepsilon B_1) - A_k
    \cdots A_1} \leq 2 \Theta^2\varepsilon k^{2d-1}
  \end{align*}
  for all $k\geq 1$, $\varepsilon<\frac{1}{\Theta}k^{-d}$, $A_i \in \calM$, $\norm{B_i}\leq 1$, $i=1,\ldots,k$.
\end{lem}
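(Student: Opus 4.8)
The plan is to multiply out the perturbed product completely, bound each resulting monomial via \Cref{lem:polynomial-growth-bound}, and recognize the sum as a binomial expression to which \Cref{lem:exp-estimate} applies. First I would fix notation: write $P_k := (A_k+\varepsilon B_k)\cdots(A_1+\varepsilon B_1)$ and $Q_k := A_k\cdots A_1$, and record that, enlarging it if necessary, we may assume the constant $\Theta$ from \Cref{lem:polynomial-growth-bound} satisfies $\Theta\ge 1$ (the estimate of that lemma still holds). Expanding the product gives $P_k=\sum_{S\subseteq\{1,\dots,k\}}\varepsilon^{|S|}W_S$, where $W_S$ is the product whose factor in position $i$ is $B_i$ if $i\in S$ and $A_i$ otherwise; since $W_\emptyset=Q_k$, this yields $P_k-Q_k=\sum_{\emptyset\ne S}\varepsilon^{|S|}W_S$.

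Next I would bound $\norm{W_S}$ for a fixed $S=\{j_1<\cdots<j_s\}$ of size $s$. Writing out $W_S$, it has the form $U_{s+1}B_{j_s}U_sB_{j_{s-1}}\cdots B_{j_1}U_1$, where each $U_t$ is a (possibly empty) product of consecutive $A_i$'s and the lengths of the $U_t$ sum to $k-s$. Since $\rho(\calM)=1$, \Cref{lem:polynomial-growth-bound} bounds a nonempty block $U_t$ of length $\ell$ by $\norm{U_t}\le\Theta\ell^{d-1}\le\Theta k^{d-1}$, while an empty block contributes a factor $1\le\Theta k^{d-1}$ (here $\Theta\ge1$ is used), and each $B_{j_t}$ contributes at most $1$; hence $\norm{W_S}\le(\Theta k^{d-1})^{s+1}$. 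Summing over the $\binom ks$ subsets of size $s$,
\begin{equation*}
  \norm{P_k-Q_k}\le\sum_{s=1}^{k}\binom ks\varepsilon^s(\Theta k^{d-1})^{s+1}
  =\Theta k^{d-1}\Bigl((1+\varepsilon\Theta k^{d-1})^k-1\Bigr).
\end{equation*}

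Finally, the hypothesis $\varepsilon<\Theta^{-1}k^{-d}$ is precisely what makes $x:=\varepsilon\Theta k^d$ lie in $[0,1]$; since $\varepsilon\Theta k^{d-1}=x/k$, \Cref{lem:exp-estimate} gives $(1+x/k)^k\le 1+2x$, and therefore
\begin{equation*}
  \norm{P_k-Q_k}\le\Theta k^{d-1}\cdot 2x=2\Theta^2\varepsilon k^{2d-1},
\end{equation*}
which is the assertion.

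I expect the only genuinely delicate point to be the combinatorial bookkeeping in the second step — identifying the monomials $W_S$ with $\le s+1$ blocks of unperturbed factors, handling empty blocks, and keeping the count $\binom ks$ straight so that the sum collapses exactly to $\Theta k^{d-1}\bigl((1+\varepsilon\Theta k^{d-1})^k-1\bigr)$ — together with the observation that the constraint on $\varepsilon$ matches exactly the range of validity of \Cref{lem:exp-estimate}. The remaining inequalities are routine.
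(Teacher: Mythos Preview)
Your proof is correct and follows essentially the same approach as the paper: expand the perturbed product, bound each monomial by $(\Theta k^{d-1})^{s+1}$ using \Cref{lem:polynomial-growth-bound}, collapse the sum to $\Theta k^{d-1}\bigl((1+\varepsilon\Theta k^{d-1})^k-1\bigr)$ via the binomial theorem, and finish with \Cref{lem:exp-estimate}. Your explicit remark that one may assume $\Theta\ge 1$ so that empty $A$-blocks (the identity) are dominated by $\Theta k^{d-1}$ is in fact a cleaner treatment of a point the paper glosses over.
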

\begin{proof}
  Let $k\geq 1$, $A_j\in\calM$ and $B_j \in \K^{d \times d}$, $\norm{B_j}\leq 1$, $j=1,\ldots, k$ be arbitrary. When expanding the product in the norm, there remain products of length $k$ in which $m$ positions, $m=1,\ldots,k$, are taken by factors $\varepsilon B_i$. Let $1\leq i_1 < i_2 < \cdots < i_m \leq k$ be the indices of these positions. The corresponding product is
  \begin{equation}
      A_k\cdots A_{i_m+1}\varepsilon B_{i_m} A_{i_m-1}\cdots
  A_{i_2+1}\varepsilon B_{i_2}A_{i_2-1} \cdots A_{i_1+1}\varepsilon B_{i_1}A_{i_1-1}\cdots A_{1}.
  \end{equation}
  The norm of this product can be upper bounded by
\begin{align*}
  (\Theta j_0^{d-1}) \varepsilon (\Theta j_1^{d-1}) \dots \varepsilon (\Theta j_{m}^{d-1})
  &= \varepsilon^m \Theta^{m+1} (j_0 j_1 \cdots
    j_m)^{d-1} \\
  &\leq \varepsilon^m \Theta^{m+1} k^{(d-1)(m+1)}.
  \end{align*}
  where $j_s=i_{s+1}-i_{s}-1, \; s\in\{0,\dots,m\}$ denotes the length of the blocks
  of $A$ matrices in the product terminated by $B$ matrices
  and where $i_0=0, i_{m+1}=k+1$.

  To estimate the norm we estimate the sum of all such products and obtain
  \begin{align*}
      \norm{
    (A_k+\varepsilon B_k)\cdots(A_1+\varepsilon B_1)
    -
    A_k \cdots A_1} 
    &\leq \sum_{m=1}^k \binom{k}{m} \varepsilon^m (\Theta k^{d-1})^{m+1} \\
    &=\Theta k^{(d-1)}((1+\varepsilon \Theta k^{d-1})^k-1).
  \end{align*}

  By assumption $\varepsilon \Theta k^d <1$ and hence we can apply
  Lemma~\ref{lem:exp-estimate}
  with $x:=\varepsilon \Theta k^d$ to obtain 
    \begin{align*}
    \Theta k^{d-1}((1+\varepsilon \Theta k^{d-1})^k-1) &\leq \Theta
      k^{d-1} 2 \Theta k^d \varepsilon. \qedhere
    \end{align*}
  \end{proof}

  The next result links the approximability of $\rho$ by spectral radii of finite products with perturbation bounds of $\rho$. 

    \begin{thm}
  \label{thm:hoelder-lower-bound-allr}  
  Let $\calM \in \calH(d)$ be a compact set of matrices for which there are
      $r\geq 1$ and a constant $\Lambda>0$ such that for all $n\geq 1$
      it holds that
      \begin{align*}
    \rho(\calM)
    \leq \max_{1 \leq k \leq n} \max_{S \in \mathcal{S}_k(\calM)} \rho(S)^{1/k} +\frac{\Lambda}{n^r}\rho(\calM).
  \end{align*}
  Then there are an integer $n_0 \in \N$ and a constant $\Omega>0$, both only depending on 
  $\Lambda$ and not on $\calM$, and a constant $\tau>0$, depending on $\calM$,  such
  that for all $n \geq  n_0$ and all compact sets of matrices $\calN$
  with \begin{align*}
    d_H(\calM,\calN) \leq \frac{\tau}{n^{d^2+dr}}\rho(\calM)
  \end{align*}
  we have $\rho(\calN) \geq \rho(\calM)(1-\frac{\Omega}{n^r})$.
\end{thm}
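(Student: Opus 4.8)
The plan is to first normalize and dispose of trivial cases, then compare a near-optimal finite product of $\calM$ with a corresponding product of $\calN$ via Elsner's theorem, and finally take $k$-th roots. If $\rho(\calM)=0$ then $d_H(\calM,\calN)\le 0$ forces $\calN=\calM$ and there is nothing to show, so assume $\rho(\calM)>0$; dividing every matrix of $\calM$ and of $\calN$ by $\rho(\calM)$ multiplies both $\rho$ and $d_H$ by $1/\rho(\calM)$ and turns the hypothesis into the identical statement for the rescaled set, so we may assume $\rho(\calM)=1$. Put $\varepsilon:=d_H(\calM,\calN)$ (assume $\varepsilon>0$, else nothing to show) and let $\Theta$ be the constant of \Cref{lem:polynomial-growth-bound} for $\calM$.

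Fix $n\ge n_0$, with $n_0$ to be chosen depending only on $\Lambda$. Since $\calM$ is compact the maximum in the hypothesis is attained, so there are $k\in\{1,\dots,n\}$ and $A_0,\dots,A_{k-1}\in\calM$ such that $S:=A_{k-1}\cdots A_0$ satisfies $\rho(S)^{1/k}\ge 1-\Lambda n^{-r}=:a$, hence $\rho(S)\ge a^k$. Choose $B_i\in\calN$ with $\norm{B_i-A_i}_2\le\varepsilon$ and set $T:=B_{k-1}\cdots B_0\in\calS_k(\calN)$, so that $\rho(\calN)\ge\rho(T)^{1/k}$. The crux is to bound $\rho(S)-\rho(T)$ from above. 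From \Cref{lem:polynomial-growth-bound} (recall $\rho(\calM)=1$) we get $\norm{S}_2\le\Theta k^{d-1}$; writing $B_i=A_i+\varepsilon\tilde B_i$ with $\norm{\tilde B_i}_2\le1$ and invoking \Cref{lem:norm-bound-for-perturbed-product} we get $\norm{T-S}_2\le 2\Theta^2\varepsilon k^{2d-1}$, valid once $\varepsilon\Theta k^d\le1$, which we will arrange and which then also gives $\norm{T}_2\le 3\Theta k^{d-1}$. Feeding $\norm{S}_2+\norm{T}_2\le 4\Theta k^{d-1}$ and the bound on $\norm{T-S}_2$ into Elsner's theorem (\Cref{thm:elsner}) and noting that the total exponent of $k$ equals $\bigl((d-1)^2+2d-1\bigr)/d=d$, we obtain
\[
  \abs{\rho(S)-\rho(T)}\le\bigl(4\Theta k^{d-1}\bigr)^{\frac{d-1}{d}}\bigl(2\Theta^2\varepsilon k^{2d-1}\bigr)^{\frac1d}\le C\,\Theta^{\frac{d+1}{d}}\,\varepsilon^{1/d}k^d
\]
with $C$ an absolute constant. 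Taking $\varepsilon=\tau n^{-(d^2+dr)}$ and using $k\le n$, the right-hand side becomes $C\Theta^{(d+1)/d}\tau^{1/d}n^{-r}=:\delta$, and at the same time $\varepsilon\Theta k^d\le\tau\Theta$; this collapse of $\varepsilon^{1/d}k^d$ to a multiple of $n^{-r}$ is exactly why the threshold $n^{-(d^2+dr)}$ is the correct one.

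It remains to pass from $\rho(T)\ge a^k-\delta$ to a lower bound for $\rho(T)^{1/k}$. Choose $n_0$ (depending only on $\Lambda$) so large that $\Lambda n^{-r}\le\tfrac12$ for all $n\ge n_0$, which is possible because $r\ge1$; then $0<a\le1$, and from $-\log(1-t)\le 2t$ on $[0,\tfrac12]$ together with $r\ge1$ one gets $a^{-k}\le a^{-n}\le e^{2\Lambda}$. Now pick $\tau>0$ small enough (depending on $\calM$ only through $\Theta$) that $\tau\Theta\le1$ and $e^{2\Lambda}C\Theta^{(d+1)/d}\tau^{1/d}\le\Lambda$; then $\delta a^{-k}\le e^{2\Lambda}\delta\le\Lambda n^{-r}\le\tfrac12$, and since $(1-x)^{1/k}\ge1-x$ for $x\in[0,1]$ we conclude
\[
  \rho(T)^{1/k}\ge\bigl(a^k-\delta\bigr)^{1/k}=a\bigl(1-\delta a^{-k}\bigr)^{1/k}\ge a-\delta a^{1-k}\ge a-e^{2\Lambda}\delta\ge 1-2\Lambda n^{-r}.
\]
Thus $\rho(\calN)\ge\rho(T)^{1/k}\ge1-2\Lambda n^{-r}$ for all $n\ge n_0$ and all $\calN$ with $d_H(\calM,\calN)\le\tau n^{-(d^2+dr)}$; undoing the normalization gives the assertion with $\Omega:=2\Lambda$ and the chosen $n_0$, both depending only on $\Lambda$, while $\tau$ depends on $\calM$ solely via $\Theta$.

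I expect the only delicate point to be the bookkeeping of which constants are allowed to depend on $\calM$: the quantity $\Theta$ from \Cref{lem:polynomial-growth-bound} (and hence every power of $\Theta$ appearing above) is the sole $\calM$-dependent ingredient, and it must be absorbed entirely into $\tau$ so that $n_0$ and $\Omega$ remain free of $\calM$. The individual estimates themselves are routine once \Cref{lem:polynomial-growth-bound}, \Cref{lem:norm-bound-for-perturbed-product} and Elsner's theorem are combined in this order.
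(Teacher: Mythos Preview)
Your proof is correct and follows the same overall strategy as the paper: normalize to $\rho(\calM)=1$, pick a near-optimal finite product $S\in\calS_k(\calM)$, control $\rho(S)-\rho(T)$ for the corresponding product $T\in\calS_k(\calN)$ via \Cref{lem:norm-bound-for-perturbed-product} and Elsner's theorem, and then pass to $k$-th roots. The only noteworthy difference is in this last step: the paper invokes \Cref{lem:two-sequences} to deduce $(1-\Lambda/n^r)^k-(1-2\Lambda/n^r)^k\ge\Lambda/n^r$, whereas you bound $a^{-k}\le a^{-n}\le e^{2\Lambda}$ directly (using $r\ge1$) and then use $(1-x)^{1/k}\ge 1-x$. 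Your route is slightly more elementary and avoids the auxiliary lemma; both yield $\Omega=2\Lambda$ and an $n_0$ depending only on $\Lambda$, and both absorb the $\calM$-dependence (through $\Theta$) entirely into $\tau$, as you correctly emphasize.
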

\begin{proof}
  If $\rho(\calM)=0$, the conclusion is trivial.
  Otherwise we may scale $\calM$ and $\calN$ by $\rho(\calM)^{-1}$
  and assume $\rho(\calM)=1$ in the following.
  
  We will fix $n_0$ at the end of the proof. For the moment $n\geq 1$ is arbitrary.
  By assumption there are $k \in \{1,\dots,n\}$
  and matrices $A_1,\dots, A_k \in \calM$
  such that
  \begin{equation}
  \label{eq:finitespecradupperbound}
    1 \leq \rho(A_k \cdots A_1)^{1/k}+\frac{\Lambda}{n^r}.
  \end{equation}

  Let $\Theta$ be the constant
  from Lemma~\ref{lem:polynomial-growth-bound}
  applied to $\calM$.
  We may assume $\Theta\geq 1$.
  Set
  \begin{subequations}
  \begin{align}
    \label{eq:39-Delta}
    \Delta&:=2\Theta^2, \\
    \label{eq:39-Psi}
    \Psi&:=\max\left\{(2\Theta+\Delta)^{(d-1)/d}\Delta^{1/d},\Theta^{1/d}\Lambda\right\},\\
          \tau&:= \left(\frac{\Lambda}{\Psi}\right)^d,
     \label{eq:39-tau}     \\
    \varepsilon_n &:= \frac{\tau}{n^{d^2+dr}}.
  \end{align}
  \end{subequations}
  Let $\calN$ be a compact set of matrices with $d_H(\calM,\calN) \leq \varepsilon_n$.
  By definition of the Hausdorff metric there are matrices $B_1,\dots,B_k$
  of norm at most $1$
  such that 
  \[A_1+\varepsilon_n B_1,\dots,A_k+\varepsilon_n B_k \in \calN.\]
  We set $S:=A_k \cdots A_1$ and $T:=(A_k+\varepsilon_n B_k) \cdots (A_1+\varepsilon_n
    B_1)$.
  By Elsner's bound for the perturbed spectral radius of a matrix,
  Theorem~\ref{thm:elsner},
  we have
  \begin{align}
  \label{eq:rhoTk-ineqb}
    \rho(\calN)^k &\geq \rho(T)
    \geq \rho(S) -
      (2\norm{S}+\norm{T-S})^{(d-1)/d}\norm{T-S}^{1/d}.
  \end{align}
  Since $\varepsilon_n \leq \frac{1}{\Theta} n^{-(d^2+dr)} \leq \frac{1}{\Theta}
  k^{-d}$
  we can apply Lemma~\ref{lem:norm-bound-for-perturbed-product}
  to get an upper bound on $\norm{T-S}$, namely
  \begin{equation}
  \label{eq:38-help1b}
     \norm{T-S} \leq \Delta \varepsilon_n k^{2d-1}. 
  \end{equation}
  Furthermore $\norm{S} \leq \Theta k^{d-1}$ by Lemma~\ref{lem:polynomial-growth-bound}
  and $\varepsilon_n \leq k^{-d}$, hence
  \begin{equation}
  \label{eq:38-help2b}
    2\norm{S}+\norm{T-S} \leq (2\Theta + \Delta)k^{d-1}.
  \end{equation}
  Inserting \eqref{eq:finitespecradupperbound}, \eqref{eq:38-help1b}, \eqref{eq:38-help2b} into \eqref{eq:rhoTk-ineqb} we obtain
  \begin{align*}
  \rho(\calN)^k &\geq \rho(S) -  \Psi\cdot(k^{d-1})^{(d-1)/d} (\varepsilon_n k^{2d-1})^{1/d} \\
                  &\geq \left(1-\frac{\Lambda}{n^r}\right)^k - \Psi\cdot(k^{d^2}
                    \varepsilon_n)^{1/d} \\
                  &\geq \left(1-\frac{\Lambda}{n^r}\right)^k - \Lambda\cdot(n^{-dr})^{1/d}                  .  
  \end{align*}
  We obtain
    \begin{align*}
    \rho(\calN)&\geq \left(\left(1-\frac{\Lambda}{n^r}\right)^k -
                 \frac{\Lambda}{n^r}\right)^{1/k}.
    \end{align*}
    Finally, we apply Lemma~\ref{lem:two-sequences}
    with $a_n = 1 - \frac{\Lambda}{n^r}$
    and $b_n = 1-2\frac{\Lambda}{n^r}$, $n\geq 1$,
    to obtain the existence of an $n_0$ such that for all $n\geq n_0$, $1\leq k \leq n$
    \begin{equation}
    \label{eq:choose_n_0}
      \left(1-\frac{\Lambda}{n^r}\right)^k -
      \left(1-2\frac{\Lambda}{n^r}\right)^k \geq \frac{\Lambda}{n^r}.
    \end{equation}
    In particular $n_0$ only depends on $\Lambda$. For $n\geq n_0$  we get
    \begin{align*}
      \rho(\calN)&\geq 1 - 2\frac{\Lambda}{n^r}. 
    \end{align*}
    The proof is completed by setting $\Omega=2\Lambda$.
\end{proof}

With this result at hand, we obtain two pointwise Hölder continuity results. One will use the universal constant $\Lambda_d$ from \Cref{prop:converging-lower-bound}. For the second we recall the following result of Morris for finite matrix sets.

\begin{thm}[Morris {\cite[Theorem 1.2]{morris2010rapidly}}]
  \label{thm:morris-rapid-lower-bound}
  Let $\calM \subseteq \C^{d \times d}$ be a finite set of matrices
  and let $r \in \N$.
  There is a constant $\Lambda$ depending on $\calM$
  such that for all $n\geq 1$
  \begin{align*}
    \max_{1\leq k \leq n} \sup_{S \in \calS_k(\calM)} \rho(S)^{1/k} \geq \rho(\calM)\left(1- \frac{\Lambda}{n^r} \right). 
  \end{align*}
\end{thm}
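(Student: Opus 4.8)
The plan is to follow a multiplicative-ergodic-theory route, as in \cite{morris2010rapidly}. I begin with two harmless reductions. If $\rho(\calM)=0$ the right-hand side is non-positive and the inequality is trivial. Otherwise, choosing a maximal common flag makes $\calM$ block upper triangular with finite diagonal blocks $\calM_{11},\dots,\calM_{mm}$ that are irreducible or zero; since the eigenvalues of a block triangular matrix are the union of those of its diagonal blocks, $\rho(S)=\max_i\rho(S_{ii})$ for every $S\in\calS_k(\calM)$ and $\rho(\calM)=\max_i\rho(\calM_{ii})$, so the statement for $\calM$ follows from the statements for the blocks on setting $\Lambda:=\max_i\Lambda_i$: the zero blocks are trivial, and the irreducible blocks, which automatically satisfy $\rho>0$, may be rescaled so that their joint spectral radius equals $1$. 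Thus it suffices to treat an irreducible $\calM=\{A_1,\dots,A_N\}$ with $\rho(\calM)=1$. Passing to symbolic dynamics, identify a word $w=w_1\cdots w_k$ with the product $A_w:=A_{w_k}\cdots A_{w_1}$ and a periodic word of length $k$ with the $\sigma$-invariant measure $\mu_w$ on the associated periodic orbit of the full shift $(\{1,\dots,N\}^{\N},\sigma)$; then $\tfrac1k\log\rho(A_w)=\lambda_1(\mu_w)$, the top Lyapunov exponent of the linear cocycle generated by $\calM$ over $\mu_w$. The goal becomes: for each fixed $r$ there is $\Lambda$ such that, for every $n$, some periodic orbit of period at most $n$ has top Lyapunov exponent at least $-\Lambda n^{-r}$.

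The second ingredient is the variational identity $\log\rho(\calM)=\sup_\mu\lambda_1(\mu)$, the supremum running over all $\sigma$-invariant probability measures. The bound ``$\le$'' follows from Kingman's subadditive theorem together with $\int\log\norm{A^{(k)}_\omega}\,\dd\mu(\omega)\le\max_{S\in\calS_k(\calM)}\log\norm{S}$, and ``$\ge$'' follows from the Berger--Wang identity \eqref{eq:Berg-Wang-jsr}: periodic measures carried by near-optimal finite products have Lyapunov exponents converging to $\log\rho(\calM)$, and weak-$*$ compactness of the invariant measures combined with upper semicontinuity of $\mu\mapsto\lambda_1(\mu)$ (an infimum of the weak-$*$ upper semicontinuous maps $\mu\mapsto\tfrac1k\int\log\norm{A^{(k)}_\omega}\,\dd\mu(\omega)$) yields an invariant --- hence, after ergodic decomposition, an ergodic --- measure $\mu^\ast$ with $\lambda_1(\mu^\ast)=0$.

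The decisive step is a \emph{quantitative} periodic approximation of $\mu^\ast$. I would combine a rate-controlled version of the multiplicative ergodic theorem --- valid on a set of $\mu^\ast$-measure bounded away from $0$ and controlling, at scale $n$, both the subexponential norm growth $\tfrac1k\log\norm{A^{(k)}_\omega}\to0$ and the angles of the Oseledets splitting --- with the periodic specification property of the full shift on finitely many symbols: a long generic orbit segment is closed up into a periodic orbit of comparable period, and the effect of this surgery on the top Lyapunov exponent is estimated. The passage from a bound on \emph{norms} to a bound on the \emph{spectral radius} of the periodic product $A_u$ is then made through a trace estimate such as $\rho(A_u)\ge\abs{\tr A_u}/d$, or by iterating and using $\rho(A_u)=\lim_j\norm{A_u^j}^{1/j}$ with the polynomial trajectory bound $\norm{A_u^j}\le\Theta(j\abs{u})^{d-1}$ of \Cref{lem:polynomial-growth-bound}. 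Finiteness of $\calM$ enters essentially here in two ways: it makes the phase space the full shift (good specification, abundant periodic points), and it supplies the uniform bounded-distortion constants without which the scheme degenerates --- precisely the uniformity whose failure is exhibited in \Cref{example:L3isintrouble}. In my view this is the main obstacle: upgrading an almost-everywhere asymptotic statement to a finite one with a rate beating \emph{every} polynomial $n^{-r}$, while keeping the spectral radius of the perturbed periodic product from dropping below the growth it displays along the chosen segment.

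Finally, it is worth recording why softer arguments do not seem to suffice, so that the appeal to ergodic theory does not look gratuitous. A Barabanov norm $v$ of $\calM$ furnishes, along an extremal trajectory, products $S\in\calS_k(\calM)$ of every length $k$ with $v(S)=1$; but $v(S)=1$ only forces $\rho(S)\le1$, not $\rho(S)$ close to $1$, and the natural quantitative refinements carry constants that are not uniform over products. Likewise, taking $S\in\calS_n(\calM)$ with $\norm{S}$ maximal gives $1\le\norm{S}\le\Theta n^{d-1}$ by \Cref{lem:polynomial-growth-bound}, whence $\rho(S)\ge\abs{\tr S}/d$ yields at best the rate $n^{-1}$ --- exactly what \Cref{prop:converging-lower-bound} already provides for arbitrary $d$-dimensional sets. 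Beating every polynomial therefore requires genuinely exploiting finiteness, and the ergodic mechanism above (or a comparably strong substitute, e.g.\ a multiscale construction of near-optimal products) appears unavoidable.
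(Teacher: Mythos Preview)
The paper does not give its own proof of this theorem: it is quoted verbatim as \cite[Theorem~1.2]{morris2010rapidly} and used as a black box in the proof of \Cref{thm:pointwise-Hoelder-everywhere-v1}\,(iii). There is therefore nothing in the present paper to compare your proposal against.

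That said, your sketch is broadly faithful to Morris's actual argument in \cite{morris2010rapidly}: he does work on the full shift over the alphabet $\calM$, invokes the variational principle $\log\rho(\calM)=\sup_\mu\lambda_1(\mu)$, and the heart of the proof is indeed a quantitative approximation of a maximising ergodic measure by periodic orbits with controlled loss in the top Lyapunov exponent. Your honest flagging of the ``decisive step'' as the main obstacle is accurate --- that is exactly where the work lies, and it is not something one can fill in with a paragraph; Morris needs several pages and relies on prior machinery (in particular results of Kalinin on Lyapunov exponents of periodic orbits). Your reductions to the irreducible, normalised case are fine, and your closing discussion of why Barabanov-norm or Bochi-type arguments top out at rate $n^{-1}$ is a correct diagnosis. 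As written, though, this is an outline rather than a proof, and since the paper itself only cites the result, the appropriate move here is simply to cite Morris as the paper does.
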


We can now state our main results of this section. We begin by observing that if conjecture P2 is satisfied for a certain $\alpha >0 $, then P1 is satisfied for 
$\alpha' = \min\{\alpha, \frac{1}{d}\}$.

\begin{thm}
    \label{thm:pointwiseHoelder-v1}
    Let $\alpha >0$.
    Assume that for every $\calM \in \calH(d)$
    there are $\Gamma>0$ and $1>\gamma>0$ such that
    \[\rho(\calN) \geq \rho(\calM) - \Gamma d(\calM,\calN)^\alpha\]
    for all $\calN \in \calH(d)$ with $d(\calM,\calN)<\gamma$.
    Then the joint spectral radius in dimension $d$ is pointwise Hölder continuous with exponent $\alpha' = \min\{\alpha, \frac{1}{d}\}$.
\end{thm}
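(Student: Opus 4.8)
The plan is to combine the hypothesized one‑sided lower bound with the one‑sided upper bound that is already available from the $\varepsilon$-inflation estimate, \Cref{prop:hoel1}. Fix $\calM\in\calH(d)$ and let $m$ be its reducibility index. Since a maximal flag $\{0\}=F_0\subsetneq F_1\subsetneq\dots\subsetneq F_m=\C^d$ of joint invariant subspaces has strictly increasing dimensions, $m\le d$, hence $1/m\ge 1/d$. This elementary observation is exactly what turns the reducibility-dependent exponent of \Cref{prop:hoel1} into a dimension-dependent one.

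For the upper bound, observe that if $\calN\in\calH(d)$ and $\varepsilon:=d(\calM,\calN)$, then every element of $\calN$ lies within spectral-norm distance $\varepsilon$ of $\calM$, so $\calN\subseteq\calM+\varepsilon\calB_V$ with $V=\C^{d\times d}$ and $\norm{\cdot}=\norm{\cdot}_2$ in \eqref{eq:def:BV}. Monotonicity of $\rho$ under set inclusion gives $\rho(\calN)\le\rho(\calM+\varepsilon\calB_V)=r(\varepsilon)$ in the notation of \Cref{prop:hoel1}. Applying that proposition with $\eta=1$ yields a constant $C_1>0$ with $r(\varepsilon)-r(0)\le C_1\varepsilon^{1/m}$ for $\varepsilon\in[0,1]$; since $1/m\ge 1/d$ and $\varepsilon\le 1$ we have $\varepsilon^{1/m}\le\varepsilon^{1/d}$, so
\[
\rho(\calN)-\rho(\calM)\le C_1\, d(\calM,\calN)^{1/d}\qquad\text{whenever }d(\calM,\calN)\le 1.
\]
The lower bound is exactly the hypothesis: there are $\Gamma>0$ and $\gamma\in(0,1)$ such that $\rho(\calM)-\rho(\calN)\le\Gamma\,d(\calM,\calN)^\alpha$ for $d(\calM,\calN)<\gamma$.

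It remains to package the two estimates. Put $\alpha':=\min\{\alpha,1/d\}$ and $\delta_0:=\min\{1,\gamma\}$. For $0<d(\calM,\calN)<\delta_0$ both $d(\calM,\calN)^{1/d}$ and $d(\calM,\calN)^\alpha$ are bounded above by $d(\calM,\calN)^{\alpha'}$, whence $\abs{\rho(\calN)-\rho(\calM)}\le\max\{C_1,\Gamma\}\,d(\calM,\calN)^{\alpha'}$ for all such $\calN$. Consequently
\[
\limsup_{\calN\to\calM}\frac{\abs{\rho(\calN)-\rho(\calM)}}{d(\calM,\calN)^{\alpha'}}\le\max\{C_1,\Gamma\}<\infty,
\]
i.e.\ $\rho$ is Hölder continuous at $\calM$ with exponent $\alpha'$ in the sense of \Cref{def:Hölder}(i); since $\calM$ was arbitrary, this is precisely pointwise Hölder continuity with exponent $\alpha'$.

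I do not expect a serious obstacle here: the only points requiring care are the bound $m\le d$ (needed to extract a dimension-uniform upper exponent from the reducibility-dependent \Cref{prop:hoel1}), the restriction to $d(\calM,\calN)\le 1$ so that the comparison $\varepsilon^{1/m}\le\varepsilon^{1/d}$ is valid, and finally collapsing the two one-sided bounds onto the common exponent $\alpha'$. The genuine content of the section lies elsewhere, namely in establishing an \emph{unconditional} lower bound of the form hypothesized here, which is the role of \Cref{thm:hoelder-lower-bound-allr} combined with \Cref{prop:converging-lower-bound} and Morris' theorem.
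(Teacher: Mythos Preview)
Your proof is correct and follows essentially the same approach as the paper's: apply \Cref{prop:hoel1} with $V=\C^{d\times d}$ and $\eta=1$ for the upper bound, invoke the hypothesis for the lower bound, and collapse both onto the common exponent $\alpha'=\min\{\alpha,1/d\}$. If anything you are slightly more explicit than the paper, which jumps directly from $C_\calM\,\varepsilon^{1/m}$ to $C_\calM\,\varepsilon^{1/d}$ without spelling out the bound $m\le d$ and the restriction $\varepsilon\le 1$.
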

\begin{proof}
  Fix $\mathcal{M}\in \calH(d)$.
  Let $\calB_2$ be the unit ball
  in $\C^{d\times d}$ with respect to the spectral norm, set $\eta:=1$ and let
  $C_\calM$ be the constant guaranteed to exist by \Cref{prop:hoel1}
  for the map $r:\varepsilon \mapsto \rho(\calM+\varepsilon \calB_2)$.
  Now
  consider an arbitrary $\calN\in\calH(d)$ with
  $d_H(\calM,\calN)\leq \eta$. 
  On one hand \Cref{prop:hoel1} yields
 \begin{align*}
      \rho(\calN) - \rho(\calM)  &\leq
      \rho(\calM + d_H(\calM,\calN)\calB_2) - 
                                     \rho(\calM) \\
                                    &\leq C_{\calM} d_H(\calM,\calN)^{1/d} \leq C_{\calM} d_H(\calM,\calN)^{\alpha'}.
  \end{align*} 
  On the other hand, by assumption if $d(\calM,\calN)<\gamma$ then
  \begin{align*}
    \rho(\calM) - \rho(\calN) \leq
    \Gamma d_H(\calN,\calM)^{\alpha}\leq
    \Gamma d_H(\calN,\calM)^{\alpha'}. 
  \end{align*}
  Together these two inequalities show
  \begin{align*}
        \limsup_{\calN \to \calM} \frac{\abs{\rho(\calM)-\rho(\calN)}}{d_H(\calM,\calN)^{\alpha'}} &\leq \max(\Gamma,C_\calM) < \infty.\qedhere
  \end{align*}
\end{proof}

We end this section with the two concrete positive results on pointwise Hölder continuity: On $\calH(d)$ we have pointwise Hölder continuity with exponent $\alpha=\frac{1}{d^2+d}$. If we restrict to the set of finite subsets of $\calH(d)$ then pointwise Hölder continuity with exponent $\alpha=\frac{1}{d+\varepsilon}$ holds for every $\varepsilon>0$. Note that this does not imply pointwise Hölder continuity with exponent $\frac{1}{d}$.

\begin{thm}
    \label{thm:pointwise-Hoelder-everywhere-v1}
    \begin{enumerate}[(i)]
        \item If $\calM\in \calH(d)$ satisfies $\rho(\calM)=0$, then the joint spectral radius is Hölder continuous at $\calM$ with exponent 
        $\alpha=\frac{1}{d}$.
        \item The joint spectral radius in dimension $d$ is pointwise Hölder continuous on $\calH(d)$ with exponent $\alpha=\frac{1}{d^2+d}$.
        \item Let $\calM\subseteq \C^{d\times d}$ be finite. For every $\varepsilon>0$ the joint spectral radius is Hölder continuous at $\calM$ with exponent 
        $\alpha=\frac{1}{d+\varepsilon}$.
    \end{enumerate}
\end{thm}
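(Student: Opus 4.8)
The plan is to handle all three parts by a single two-sided scheme. For the \emph{upper} direction I would use the $\varepsilon$-inflation estimate \Cref{prop:hoel1} with $V=\C^{d\times d}$, the spectral norm, and $\eta=1$: since $d_H(\calM,\calN)$-closeness means $\calN\subseteq\calM+d_H(\calM,\calN)\calB_2$, monotonicity of $\rho$ gives $\rho(\calN)-\rho(\calM)\le\rho(\calM+d_H(\calM,\calN)\calB_2)-\rho(\calM)\le C_\calM\, d_H(\calM,\calN)^{1/m}$ whenever $d_H(\calM,\calN)\le 1$, where $m\le d$ is the reducibility index of $\calM$ (a strictly increasing flag in $\C^d$ has length at most $d$). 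Because $1/m\ge 1/d\ge 1/(d^2+d)$ and $d_H(\calM,\calN)\le 1$, this bound can be weakened to whichever of the three target exponents is needed. For the \emph{lower} direction I would feed \Cref{thm:hoelder-lower-bound-allr} with an appropriate ``approximation of $\rho$ by spectral radii of finite products'' estimate and then translate the integer-indexed inequality it outputs into a Hölder inequality via \Cref{lem:Hoelder-via-integers}.

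For part (i), when $\rho(\calM)=0$ the lower direction is automatic, since $\rho(\calN)\ge 0=\rho(\calM)$ forces $\rho(\calM)-\rho(\calN)\le 0$; hence $\abs{\rho(\calM)-\rho(\calN)}=\rho(\calN)-\rho(\calM)$ and the inflation bound above with exponent $1/m\ge 1/d$ already yields Hölder continuity at $\calM$ with exponent $\tfrac1d$. For part (ii) I would invoke \Cref{prop:converging-lower-bound}, which is precisely the hypothesis of \Cref{thm:hoelder-lower-bound-allr} with $r=1$ and $\Lambda=\Lambda_d$ (a dimensional constant). That theorem then supplies $n_0$ and $\Omega=2\Lambda_d$ depending only on $d$, and a $\tau>0$ depending on $\calM$, such that for $n\ge n_0$ the implication $d_H(\calM,\calN)\le\tau\rho(\calM)n^{-(d^2+d)}\Rightarrow\rho(\calM)\ge\rho(\calN)-\Omega\rho(\calM)n^{-1}$ holds; applying \Cref{lem:Hoelder-via-integers} with $p=1$, $q=d^2+d$, $\tilde\tau=\tau\rho(\calM)$, $\tilde\Omega=\Omega\rho(\calM)$ converts this into $\rho(\calM)\ge\rho(\calN)-C\,d_H(\calM,\calN)^{1/(d^2+d)}$ on a small ball about $\calM$. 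Combining with the weakened inflation upper bound gives pointwise Hölder continuity with exponent $\tfrac1{d^2+d}$, and the degenerate case $\rho(\calM)=0$ is absorbed into part (i), whose exponent $\tfrac1d$ is even better.

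Part (iii) is the same argument with one substitution: for finite $\calM$ one uses Morris' \Cref{thm:morris-rapid-lower-bound} in place of \Cref{prop:converging-lower-bound}, which allows an arbitrary $r\in\N$ (with $\Lambda$ now depending on $\calM$). Then \Cref{thm:hoelder-lower-bound-allr} and \Cref{lem:Hoelder-via-integers} (with $p=r$, $q=d^2+dr$) give the lower-bound exponent $\tfrac{r}{d^2+dr}=\tfrac1{\,d+d^2/r\,}$; given $\varepsilon>0$, choosing an integer $r>d^2/\varepsilon$ makes $d+d^2/r<d+\varepsilon$, so this exponent exceeds $\tfrac1{d+\varepsilon}$ and (after the usual weakening on distances $\le 1$) one obtains Hölder continuity at $\calM$ with exponent $\tfrac1{d+\varepsilon}$; again $\rho(\calM)=0$ is covered by part (i).

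Since the real engine, \Cref{thm:hoelder-lower-bound-allr}, is already established, I do not expect a genuine obstacle here — the remaining work is bookkeeping. The two points that need care are: first, isolating the degenerate case $\rho(\calM)=0$, where the lower-bound machinery is vacuous and one must fall back on \Cref{prop:hoel1}; and second, keeping track of which constants are universal (the exponents, $n_0$, and $\Omega$ in part (ii)) versus which depend on $\calM$ (the threshold $\tau$, and in part (iii) also $\Lambda$, hence $\Omega$) — this dependence on $\calM$ is exactly why the conclusions are only \emph{pointwise} Hölder and cannot be promoted to local Hölder continuity by this route.
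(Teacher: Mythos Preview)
Your proposal is correct and follows exactly the paper's route: upper bound via \Cref{prop:hoel1}, lower bound via \Cref{thm:hoelder-lower-bound-allr} (fed by \Cref{prop:converging-lower-bound} for (ii) and \Cref{thm:morris-rapid-lower-bound} for (iii)) and then \Cref{lem:Hoelder-via-integers}, with the two directions combined as in the paper's \Cref{thm:pointwiseHoelder-v1}. One harmless slip: the conclusion of \Cref{thm:hoelder-lower-bound-allr} is $\rho(\calN)\ge\rho(\calM)-\Omega\rho(\calM)n^{-1}$, not the $\calM\leftrightarrow\calN$-swapped version you wrote, so the Hölder lower bound should read $\rho(\calN)\ge\rho(\calM)-C\,d_H(\calM,\calN)^{1/(d^2+d)}$; as you have it, both displayed inequalities bound $\rho(\calN)-\rho(\calM)$ from above and neither controls $\rho(\calM)-\rho(\calN)$.
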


\begin{proof}
    (i) This is a consequence of \Cref{prop:hoel1}.
    
    (ii) Let $\calM\in\calH(d)$ be arbitrary. By (i) we may assume $\rho(\calM)>0$. Let $r=1$ and $\Lambda_d$ be the constant depending only on $d$ from \Cref{prop:converging-lower-bound}. With these constants the assumptions of \Cref{thm:hoelder-lower-bound-allr} are satisfied. 
    Using this result and the constants $\Omega$ and $\tau$
      obtained from it, we may apply \Cref{lem:Hoelder-via-integers} (with
    $p=r=1$, $q=d^2+d$) to obtain
    \begin{equation*}
        \rho(\calM) \geq \rho(\calN) - \left(2\Omega
        \rho(\calM)^{1-1/(d^2+d)}\tau^{-1/(d^2+d)}\right) d(\calM,\calN)^{1/(d^2+d)}
    \end{equation*}
    for all $\calN \in \calH(d)$ with $d(\calM,\calN) \leq \tau n_0^{-(d^2+d)}\rho(\calM)$.
    The claim follows from \Cref{thm:pointwiseHoelder-v1}.

    (iii) Fix $\varepsilon>0$ and choose an integer $r> \frac{d^2}{\varepsilon}$. As $\calM$ is finite, we may apply \Cref{thm:morris-rapid-lower-bound} to find a constant $\Lambda>0$ such that the assumptions of \Cref{thm:hoelder-lower-bound-allr} are satisfied for $\Lambda,r$. As in (ii) we may apply \Cref{lem:Hoelder-via-integers} (this time with $p=r$, $q=d^2+dr$) to get that $\rho$ is Hölder continuous at $\calM$ with exponent $\frac{r}{d^2+dr} \geq \frac{1}{d + \varepsilon}$.
\end{proof}

\section{Local $\frac{1}{d^2+d}$-Hölder-continuity of the JSR in dimension two}
\label{sec:two-dim}

In this section we will first clarify the relations between our local conjectures L1, L2, L3. \Cref{lem:independent-lower-bound-implies-locally-hoelder} below shows that that the veracity of Conjecture L2 implies that L1 holds, where the Hölder exponent of $\rho$ is directly determined by the exponent for which L2 is true. Then \Cref{lem:independent-product-bound-implies-independent-lower-bound} shows that L3 implies L2 with constant $\alpha=\frac{1}{d+d^2}$. We stress that we do not know whether L3 holds in general.

However, in dimension $2$ we can improve \Cref{lem:polynomial-growth-bound}
by removing the dependence of the constant $\Theta$ on $\calM$, or in other words L3 is true in dimension $2$.
By the previous general results this shows local Hölder continuity of the joint spectral radius on $\calH_{>0}(2)$. This is our final result of the section in \Cref{cor:local-hoelder-dim-2}.

We start with the general considerations. The following lemma shows that L2 implies L1. The statement is for general $\alpha$, but recall that of course Hölder continuity with coefficient $\alpha>1$ implies that the function is constant.

\begin{lem}
  \label{lem:independent-lower-bound-implies-locally-hoelder}
  Let $\mathcal{K} \subseteq \calH(d)$ be compact.
  If there are $\alpha>0$ and $\Gamma>0$ such that
  for all $\mathcal{M}, \mathcal{N} \in \mathcal{K}$ we have 
  \begin{align*}
    \rho(\calM) \geq \rho(\mathcal{N}) - \Gamma d(\mathcal{M},\mathcal{N})^{\alpha}
  \end{align*}
  then $\rho$ is $\alpha$-Hölder continuous on $\mathcal{K}$.
\end{lem}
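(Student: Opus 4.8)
The statement to prove is Lemma~\ref{lem:independent-lower-bound-implies-locally-hoelder}: on a compact $\mathcal{K}\subseteq\calH(d)$, a one-sided Hölder lower bound that holds for every ordered pair in $\mathcal{K}$ upgrades to genuine (two-sided) $\alpha$-Hölder continuity on $\mathcal{K}$.

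The plan is to observe that the hypothesis is completely symmetric in $\mathcal{M}$ and $\mathcal{N}$ in the sense that it is quantified over all ordered pairs. So for a fixed pair $\mathcal{M},\mathcal{N}\in\mathcal{K}$ I would apply the hypothesis twice: once in the given order to get $\rho(\calM)\geq\rho(\calN)-\Gamma\, d(\calM,\calN)^\alpha$, and once with the roles swapped to get $\rho(\calN)\geq\rho(\calM)-\Gamma\, d(\calN,\calM)^\alpha$. Since the Hausdorff metric is symmetric, $d(\calM,\calN)=d(\calN,\calM)$, and combining the two inequalities yields $\abs{\rho(\calM)-\rho(\calN)}\leq\Gamma\, d(\calM,\calN)^\alpha$ for all $\calM,\calN\in\mathcal{K}$. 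By Definition~\ref{def:Hölder}(ii) this is exactly $\alpha$-Hölder continuity of $\rho$ on $\mathcal{K}$, with the supremum of the difference quotient bounded by $\Gamma$.

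I do not expect any real obstacle here — compactness of $\mathcal{K}$ is not even used in the argument beyond the fact that it is the set on which the hypothesis is assumed; the content is purely the symmetrization of the one-sided estimate. The only thing to be a little careful about is that the hypothesis is stated as a universally quantified inequality over pairs, so swapping the names of the two sets is legitimate and requires no further justification. Thus the proof is a two-line computation.

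\begin{proof}
Let $\calM,\calN\in\mathcal{K}$. Applying the hypothesis to the pair $(\calM,\calN)$ gives
\[
  \rho(\calM) \geq \rho(\calN) - \Gamma\, d(\calM,\calN)^{\alpha},
\]
and applying it to the pair $(\calN,\calM)$ gives
\[
  \rho(\calN) \geq \rho(\calM) - \Gamma\, d(\calN,\calM)^{\alpha} = \rho(\calM) - \Gamma\, d(\calM,\calN)^{\alpha},
\]
where we used the symmetry of the Hausdorff metric. Together these two inequalities yield
\[
  \abs{\rho(\calM) - \rho(\calN)} \leq \Gamma\, d(\calM,\calN)^{\alpha}
  \qquad\text{for all } \calM,\calN\in\mathcal{K}.
\]
Hence
\[
  \sup_{\calM,\calN\in\mathcal{K},\,\calM\neq\calN} \frac{\abs{\rho(\calM)-\rho(\calN)}}{d(\calM,\calN)^{\alpha}} \leq \Gamma < \infty,
\]
so by Definition~\ref{def:Hölder}(ii) the map $\rho$ is $\alpha$-Hölder continuous on $\mathcal{K}$.
\end{proof}
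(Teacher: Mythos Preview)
Your proof is correct and follows essentially the same approach as the paper: apply the one-sided hypothesis to both ordered pairs $(\calM,\calN)$ and $(\calN,\calM)$, use symmetry of the Hausdorff metric, and combine to bound $\abs{\rho(\calM)-\rho(\calN)}$. You are also right that compactness of $\mathcal{K}$ plays no role in this particular argument.
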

\begin{proof}
  Under the given assumptions we have
  \begin{align*}
    \Gamma d(\calM,\calN)^\alpha &\geq \rho(\calN) - \rho(\calM), \\
    \Gamma  d(\calM,\calN)^\alpha&\geq \rho(\calM) - \rho(\calN)
  \end{align*}
  for all $\calM, \calN \in \mathcal{K}$ 
  and therefore
  \begin{align*}
    \sup_{\calM,\calN \in \mathcal{K}, \calM \neq \calN}
    \frac{\abs{\rho(\calM)-\rho(\calN)}}{d(\calM,\calN)^\alpha} &\leq
    \Gamma. \qedhere
  \end{align*}
\end{proof}

The following theorem shows that 
uniform trajectory bounds can be used to obtain Hölder continuity 
of $\rho$. When Conjecture L3 is satisfied, the
assumptions of the theorem 
hold for all $\mathcal{K} \subseteq \mathcal{H}_{>0}(d)$.
Notice that they cannot hold for every $\mathcal{K} \subseteq \mathcal{H}(d)$
as \Cref{example:L3isintrouble} shows.
\begin{thm}
  \label{lem:independent-product-bound-implies-independent-lower-bound}
  Let $\mathcal{K} \subseteq \mathcal{H}(d)$ be compact.
  Assume there is a constant $\Theta$ such that
  for all $\mathcal{M} \in \mathcal{K}$ with $\rho(\calM)>0$
  we have
  \begin{align*}
    \sup_{S \in \calS_k(\calM)} \norm{S} \leq \Theta  k^{d-1} \rho(\mathcal{M})^k.
  \end{align*}
  Then there is a constant $\Gamma>0$ such that 
  \begin{align*}
    \rho(\mathcal{N}) \geq \rho(\mathcal{M}) - \Gamma d(\mathcal{M},\mathcal{N})^{1/(d^2+d)}
  \end{align*}
  for all compact subsets $\calM, \calN \in \mathcal{K}$.

  In particular, $\rho$ is Hölder continuous on $\calK$ with exponent $\alpha=\frac{1}{d^2+d}$.
\end{thm}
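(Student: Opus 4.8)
The plan is to reduce the statement to \Cref{thm:hoelder-lower-bound-allr} by verifying that under the uniform trajectory bound hypothesis, every $\calM \in \calK$ with $\rho(\calM)>0$ satisfies the hypothesis of that theorem with $r=1$ and a constant $\Lambda$ that is \emph{uniform} over $\calK$; then a uniform version of the argument producing $\tau$ gives the desired locally uniform lower bound, and finally \Cref{lem:independent-lower-bound-implies-locally-hoelder} upgrades this to Hölder continuity on $\calK$.

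First I would handle the degenerate stratum: if $\rho(\calM)=0$ then $\rho(\calN)\ge\rho(\calM)$ trivially, so assume $\rho(\calM)>0$. By \Cref{prop:converging-lower-bound} there is a dimension-only constant $\Lambda_d$ with $\max_{1\le k\le n}\sup_{S\in\calS_k(\calM)}\rho(S)^{1/k}\ge\rho(\calM)(1-\Lambda_d/n)$, so the hypothesis of \Cref{thm:hoelder-lower-bound-allr} holds with $r=1$ and $\Lambda=\Lambda_d$, \emph{independently of $\calM$}. The key point is that \Cref{thm:hoelder-lower-bound-allr} produces $n_0$ and $\Omega=2\Lambda$ depending only on $\Lambda$, hence only on $d$ here, while the set-dependent constant $\tau$ enters only through the constant $\Theta$ from \Cref{lem:polynomial-growth-bound}. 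But the present hypothesis says precisely that $\Theta$ can be taken uniform over all $\calM\in\calK$ with $\rho(\calM)>0$. Tracing the definitions \eqref{eq:39-Delta}–\eqref{eq:39-tau}, $\Delta=2\Theta^2$, $\Psi=\max\{(2\Theta+\Delta)^{(d-1)/d}\Delta^{1/d},\Theta^{1/d}\Lambda_d\}$ and $\tau=(\Lambda_d/\Psi)^d$ then become uniform over $\calK$ as well. So there exist $n_0,\Omega,\tau>0$, all independent of the particular $\calM\in\calK$, such that for $n\ge n_0$ and any compact $\calN$ with $d_H(\calM,\calN)\le \tau n^{-(d^2+d)}\rho(\calM)$ we have $\rho(\calN)\ge\rho(\calM)(1-\Omega/n)$.

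Next I would convert this "sequence of integers" estimate into a genuine Hölder lower bound via \Cref{lem:Hoelder-via-integers}, applied with $p=1$, $q=d^2+d$, $f=\rho$, $x_0=\calM$: it yields $\rho(\calM)\ge\rho(\calN)-(2\Omega\,\tau^{-1/(d^2+d)}\rho(\calM)^{1-1/(d^2+d)})\,d_H(\calM,\calN)^{1/(d^2+d)}$ whenever $d_H(\calM,\calN)\le\tau n_0^{-(d^2+d)}\rho(\calM)$. Since $\calK$ is compact, $R:=\sup_{\calM\in\calK}\rho(\calM)<\infty$, so the prefactor is bounded above by $\Gamma_0:=2\Omega\,\tau^{-1/(d^2+d)}R^{1-1/(d^2+d)}$, uniform over $\calK$. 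This gives the claimed inequality $\rho(\calN)\ge\rho(\calM)-\Gamma\, d_H(\calM,\calN)^{1/(d^2+d)}$ at least in the regime of small $d_H(\calM,\calN)$; for pairs $\calM,\calN\in\calK$ that are far apart, one absorbs the (bounded, by continuity of $\rho$ on the compact $\calK$) quantity $\rho(\calM)-\rho(\calN)$ into $\Gamma$ by dividing by the minimal value of $d_H(\calM,\calN)^{1/(d^2+d)}$ over such pairs — a standard patching, exactly as in \Cref{lem:Hoeldersimple1}. Setting $\Gamma$ to be the maximum of $\Gamma_0$ and this patching constant gives the lower bound for all $\calM,\calN\in\calK$, and \Cref{lem:independent-lower-bound-implies-locally-hoelder} then yields that $\rho$ is $\frac{1}{d^2+d}$-Hölder on $\calK$.

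The main obstacle is not any single hard estimate but the bookkeeping of uniformity: one must be careful that the only place where $\calM$-dependence enters \Cref{thm:hoelder-lower-bound-allr} is through $\Theta$ (it does, since $n_0$ and $\Omega$ depend only on $\Lambda=\Lambda_d$), and that the $\varepsilon$-inflation step used elsewhere to control $\rho(\calN)-\rho(\calM)$ from above is \emph{not} needed here, because \Cref{lem:independent-lower-bound-implies-locally-hoelder} only asks for the one-sided lower bound symmetrized over the pair. A minor subtlety is the case $\rho(\calN)=0<\rho(\calM)$: the Hölder lower bound still must hold, but it follows from the same estimate since $\calN$ with $\rho(\calN)=0$ is still an admissible perturbation target in \Cref{thm:hoelder-lower-bound-allr}.
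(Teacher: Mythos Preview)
Your proposal is correct in structure and matches the paper's approach: trace the proof of \Cref{thm:hoelder-lower-bound-allr} with $r=1$ and $\Lambda=\Lambda_d$ to see that, under the uniform $\Theta$ hypothesis, all constants $\Delta,\Psi,\tau,\Omega,n_0$ become uniform over $\calK$; then convert the integer-scale estimate via \Cref{lem:Hoelder-via-integers}, bound the prefactor using $R=\max_{\calM\in\calK}\rho(\calM)$, and finish with \Cref{lem:independent-lower-bound-implies-locally-hoelder}.

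One genuine wrinkle in your patching step: the small-distance regime is $d_H(\calM,\calN)\le\tau n_0^{-(d^2+d)}\rho(\calM)$, a threshold that \emph{depends on $\calM$} and shrinks to zero when $\calK$ contains sets with $\rho(\calM)$ near $0$. Consequently ``the minimal value of $d_H(\calM,\calN)^{1/(d^2+d)}$ over far pairs'' is $0$, and a \Cref{lem:Hoeldersimple1}-style patching with a fixed threshold does not apply. The paper handles this differently: in the far regime the threshold inequality itself gives $\rho(\calM)\le (n_0^{d^2+d}/\tau)\,d_H(\calM,\calN)$, whence
\[
\rho(\calN)\ \ge\ 0\ \ge\ \rho(\calM)-\frac{n_0^{d^2+d}}{\tau}\,d_H(\calM,\calN),
\]
a linear (hence, on the bounded set $\calK$, also $\tfrac{1}{d^2+d}$-Hölder) lower bound that combines with the near-regime estimate and simultaneously absorbs the case $\rho(\calM)=0$.
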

\begin{proof}
  By inspecting the proof of \Cref{thm:hoelder-lower-bound-allr}
  we see that under our assumptions
  the constants $\tau$ and
  $\Omega$ in \Cref{thm:hoelder-lower-bound-allr}
  can be chosen independently of $\calM$ and $\calN$.
  More precisely let $\mathcal{K}$ and $\Theta$ be as in the assumptions.
  Let $\Lambda_d$ be the constant from
  \Cref{prop:converging-lower-bound}
  and set again 
  \begin{align*}
    \Delta &:=2\Theta^2,\\
    \Psi &:=\max\left\{(2\Theta+\Delta)^{(d-1)/d}\Delta^{1/d},\Theta^{1/d}\Lambda_d\right\},\\
    \tau &:= \left(\frac{\Lambda_d}{\Psi}\right)^d,\\
    \varepsilon_n &:= \frac{\tau}{n^{d^2+d}}, \\
    \Omega &:=2\Lambda_d.
  \end{align*}
  Note that under the assumptions all constants only depend on $\calK$.
  Then the proof of \Cref{thm:hoelder-lower-bound-allr}
  shows that there is an integer $n_0$ only depending on
  the dimension $d$, such that for all $n \geq  n_0$ and all
  $\calM, \calN \in \mathcal{K}$
  with \begin{align*}
    d_H(\calM,\calN) \leq \frac{\tau}{n^{d^2+d}}\rho(\calM)
  \end{align*}
  we have 
  \[\rho(\calN) \geq \rho(\calM)\left(1-\frac{\Omega}{n}\right) .\]
  Set $R := \max\{ \rho(\calM) \setsep \calM\in \calK\}$.
  As before, we get with \Cref{lem:Hoelder-via-integers} that
  \begin{equation}
 \label{eq:lH-cond1}  
 \begin{aligned}     
 \rho(\calN) &\geq \rho(\calM) - \left(2\Omega\rho(\calM)^{1-1/(d^2+d)} \tau^{-1/(d^2+d)}\right) d_H(\calM,\calN)^{1/(d^2+d)} \\
     &\geq \rho(\calM) - \left(2\Omega R^{1-1/(d^2+d)} \tau^{-1/(d^2+d)}\right) d_H(\calM,\calN)^{1/(d^2+d)},
     \end{aligned}
  \end{equation}
  for all $\calM, \calN \in \mathcal{K}$ with $d_H(\calM,\calN) \leq
  \frac{\tau}{n_0^{d^2+d}} \rho(\calM)$. On the other hand for $\calM, \calN \in \mathcal{K}$ with
  $d_H(\calM,\calN) \geq \frac{\tau}{n_0^{d^2+d}}\rho(\calM)$
  we have
  \begin{equation}
  \label{eq:lH-cond2}
      \rho(\calN) \geq 0 \geq \rho(\calM) - \frac{n_0^{d^2+d}}{\tau} d_H(\calM,\calN).
  \end{equation}
  Note that this also covers the case $\rho(\calM)=0$.
  Combining \eqref{eq:lH-cond1}, \eqref{eq:lH-cond2}, this shows for all $\calM,\calN \in \mathcal{K}$ that
  \begin{align*}
    \rho(\calN) \geq \rho(\calM) - \Gamma d_H(\calM,\calN)
  \end{align*}
  with $\Gamma:=\max\left\{\frac{n_0^{d^2+d}}{\tau},2
  \Omega R^{1-1/(d^2+d)} 
  \tau^{-1/(d^2+d)} \right\}$.
  Local Hölder continuity is now an immediate consequence of \Cref{lem:independent-lower-bound-implies-locally-hoelder}.
\end{proof}

We now turn to the two dimensional case, in which the constants for the exponential-polynomial bounds can be locally bounded. The proof for this relies to a great part on the geometry of $\C^2$. We first need the following observation. 

\begin{lem}
    \label{lem:distance-subspace}
    Consider $r_1\geq r_2 > 0$
    and let $(\eta, \zeta) \in \C^2$ be a point 
    with $\abs{\eta} \leq r_1$ and $\abs{\zeta}=r_2$.
    Then the distance between the origin in $\C^2$
    and the affine subspace spanned by $(r_1,0)$
    and $(\eta,\zeta)$ is at least $\frac{1}{\sqrt{5}}r_2$.
\end{lem}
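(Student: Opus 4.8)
The plan is to compute the distance explicitly by reducing to a two-dimensional (over $\R$) minimization. The affine subspace $L$ spanned by $(r_1,0)$ and $(\eta,\zeta)$ is the set of points $(1-t)(r_1,0) + t(\eta,\zeta)$ for $t \in \C$; since we only care about the Euclidean distance of the origin to $L$, and $L$ is a complex affine line, I would parametrize points on $L$ as $p(t) = (r_1 + t(\eta - r_1), t\zeta)$ with $t \in \C$ and minimize $\norm{p(t)}_2^2 = \abs{r_1 + t(\eta-r_1)}^2 + \abs{t}^2\abs{\zeta}^2$ over $t$. First I would observe that writing $w := t(\eta - r_1)$ when $\eta \neq r_1$, or handling the degenerate case $\eta = r_1$ separately, the minimization is a standard least-squares problem whose value is $r_1^2 \abs{\zeta}^2 / (\abs{\eta - r_1}^2 + \abs{\zeta}^2)$ — this is just the formula for the distance from a point to a line through two given points, valid verbatim over $\C$.

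So the claimed bound reduces to showing
\begin{equation*}
\frac{r_1^2 \, r_2^2}{\abs{\eta - r_1}^2 + r_2^2} \geq \frac{r_2^2}{5},
\end{equation*}
i.e. $5 r_1^2 \geq \abs{\eta - r_1}^2 + r_2^2$. Next I would bound the right-hand side using the hypotheses $\abs{\eta} \leq r_1$ and $r_2 = \abs{\zeta} \leq r_1$ (the latter from $r_1 \geq r_2$): by the triangle inequality $\abs{\eta - r_1} \leq \abs{\eta} + r_1 \leq 2r_1$, so $\abs{\eta - r_1}^2 \leq 4r_1^2$, and $r_2^2 \leq r_1^2$. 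Adding gives $\abs{\eta - r_1}^2 + r_2^2 \leq 5 r_1^2$, which is exactly what is needed. Taking square roots yields the distance is at least $r_2/\sqrt{5}$.

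I would also dispatch the degenerate case $\eta = r_1$ at the start: then $L$ consists of the points $(r_1, t\zeta)$, $t \in \C$, whose distance to the origin is exactly $r_1 \geq r_2 \geq r_2/\sqrt 5$, so the bound holds trivially (in fact with room to spare). I do not anticipate a genuine obstacle here; the only mild subtlety is being careful that the real/complex distinction does not matter — the least-squares distance formula for a point to an affine line spanned by two points holds over any inner product space, and $\C^2$ with its Hermitian inner product is such a space once we regard it as a real inner product space of dimension $4$. The constant $1/\sqrt 5$ is not optimal but is clean and suffices for later use; I would not attempt to sharpen it.
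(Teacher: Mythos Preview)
Your proposal is correct and follows essentially the same route as the paper: both compute the distance from the origin to the complex affine line, arriving at the identical formula $\dist^2 = r_1^2 r_2^2/(\abs{\eta-r_1}^2 + r_2^2)$, and then bound the denominator by $5r_1^2$ via $\abs{\eta-r_1}\leq 2r_1$ and $r_2\leq r_1$. The only cosmetic difference is that the paper writes down the orthogonal projection explicitly, whereas you parametrize and minimize; you are also slightly more careful in isolating the degenerate case $\eta=r_1$.
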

\begin{proof}
    Let $V \subseteq \C^2$ be the affine subspace spanned by $(r_1,0)$
    and $(\eta,\zeta)$.
    The orthogonal projection
    of $(0,0)$ onto $V$
    is 
    \begin{align*}
        \begin{pmatrix} r_1 \\ 0 \end{pmatrix}+
        \frac{r_1(\overline{\eta}-r_1)}{\abs{r_1-\eta}^2+\abs{\zeta}^2} \begin{pmatrix}r_1-\eta \\ -\zeta\end{pmatrix}
        = \frac{r_1}{\abs{r_1-\eta}^2+\abs{\zeta}^2} \begin{pmatrix}\abs{\zeta}^2 \\ \zeta(r_1-\overline{\eta})\end{pmatrix}.
    \end{align*}
    Thus the distance between $V$ and $(0,0)$
    satisfies
    \begin{align*}
        \dist(V,(0,0))^2 &= \frac{r_1^2\abs{\zeta}^2}{(\abs{r_1-\eta}^2+\abs{\zeta}^2)^2}(\abs{r_1-\eta}^2+\abs{\zeta}^2) \\
        &= \frac{r_1^2 r_2^2}{\abs{r_1-\eta}^2+r_2^2}.
    \end{align*}
   Since $\abs{\eta} \leq r_1$ we have
   \begin{align*}
    \dist(V,(0,0)) &\geq \frac{r_1 r_2}{\sqrt{4r_1^2+r_2^2}} 
    =\frac{r_2}{\sqrt{4+\frac{r_2^2}{r_1^2}}}.
   \end{align*}
   Finally $0< r_2 \leq r_1$ implies
   \begin{align*}
        \dist(V,(0,0)) &\geq \frac{1}{\sqrt{5}}r_2. \qedhere
   \end{align*}
\end{proof}

With this preparation we can prove Conjecture L3 in dimension two. We
will use the Frobenius norm (also called Hilbert-Schmidt norm), see
\cite[Section II.3.1]{stewartMatrixPerturbationTheory1990}.

\begin{lem}
  \label{lem:polynomial-growth-bound-plane}
  Let $L\geq 1$ and let $\calM \subseteq \C^{2 \times 2}$ be a compact set of matrices
  whose Frobenius norm is at most $L$. Assume $\rho(\calM)=1$.
  Then for all $k \in \N$ we have
  \begin{align*}
    \norm{A_k \cdots A_1}_2 \leq 6Lk
  \end{align*}
  for $A_i \in \calM$, $i=1,\ldots,k$.
\end{lem}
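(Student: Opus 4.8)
The idea is to reduce the bound on the Euclidean operator norm of a product $A_k\cdots A_1$ to the analysis of the $(1,2)$-entry in a suitable triangular form, using the two-dimensional geometry provided by Lemma~\ref{lem:distance-subspace}. We distinguish whether $\calM$ is irreducible or not. If $\calM$ is irreducible there is a Barabanov norm $v$ with $v(\calM)=\rho(\calM)=1$, so $v(A_k\cdots A_1)\le 1$ for all products, and since the eccentricity of $v$ with respect to $\norm{\cdot}_2$ is controlled (here is where one needs a bound depending only on $L$ and $d=2$, not on $\calM$ itself — see below), one gets $\norm{A_k\cdots A_1}_2\le \text{const}$. The genuinely two-dimensional case is when $\calM$ is reducible but $\rho(\calM)=1$: then there is a common invariant line, and after an orthogonal change of coordinates every $A\in\calM$ is upper triangular,
\[
A=\begin{bmatrix} a & b \\ 0 & c\end{bmatrix},
\]
with $\max\{\abs a,\abs c\}\le\rho(\calM)=1$ (the diagonal entries are the two one-dimensional restrictions, each of spectral radius $\le 1$) and $\abs a^2+\abs b^2+\abs c^2\le L^2$. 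A product $A_k\cdots A_1$ is again upper triangular with diagonal entries of modulus $\le 1$, and its $(1,2)$-entry is $\sum_{j=1}^k a_k\cdots a_{j+1}\,b_j\,c_{j-1}\cdots c_1$, a sum of $k$ terms each of modulus $\le L$, hence of modulus $\le Lk$. This already gives $\norm{A_k\cdots A_1}_2\le 2Lk$ or so by a crude entrywise estimate; the factor $6$ in the statement leaves room for the change-of-basis constants and for folding in the irreducible case.

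**Where Lemma~\ref{lem:distance-subspace} enters.** The point of that lemma is exactly to make the "eccentricity is bounded by $L$ and $d$" claim work, i.e.\ to handle the case where $\calM$ is irreducible (or to handle the reducible case uniformly without splitting). Concretely: if $\rho(\calM)=1$ and $v$ is the extremal/Barabanov norm, its unit ball is a balanced convex body containing the Euclidean unit ball; one needs an \emph{upper} bound on how elongated it can be, which is equivalent to a \emph{lower} bound on $c^-(v)$. Lemma~\ref{lem:distance-subspace} provides such a lower bound: given a point on the "long axis" at distance $r_1$ and another at "height" $r_2$, the affine span stays at distance $\ge r_2/\sqrt5$ from the origin, which forces a trajectory that has grown in one direction to also have a controlled component transverse to it, bounding the ratio $r_1/r_2$ in terms of $L$ via $\rho(\calM)=1$. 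I would use this to show $c^-(v)\ge c/L^{?}$ with an explicit small power, and then $\norm{A_k\cdots A_1}_2\le \ecc_2(v)\,v(A_k\cdots A_1)\le \ecc_2(v)$.

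**Order of steps and the main obstacle.** First, reduce to $\rho(\calM)=1$ (given) and pass to an orthonormal basis adapted either to the invariant line (reducible case) or chosen freely (irreducible case). Second, in the reducible case, write the product in upper triangular form, bound the diagonal entries by $1$ and the off-diagonal entry by the geometric-series-type sum $\le Lk$, and convert the entrywise bound to a bound on $\norm{\cdot}_2$. Third, in the irreducible case, invoke existence of a Barabanov norm, use Lemma~\ref{lem:distance-subspace} to bound its eccentricity by an absolute multiple of $L$, and conclude. Fourth, take the maximum of the two constants and check it is $\le 6L$ after absorbing the orthogonal-change-of-basis factor (which is $1$ for the Frobenius and spectral norms, since orthogonal conjugation is an isometry for both). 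The main obstacle I anticipate is the irreducible case: getting a clean, \emph{constant}-quality bound on the eccentricity of the Barabanov norm purely from $\abs{A}\le L$ and $\rho=1$, and doing so with small enough constants that the final answer comes out $\le 6Lk$ rather than merely $O(Lk)$. This is precisely the place where the plane's geometry (Lemma~\ref{lem:distance-subspace}) does the heavy lifting, and the bookkeeping of the $\sqrt5$'s and the change-of-basis constants is where a naive estimate would overshoot $6$; I would likely need to treat $k=1$ (where $\norm{A}_2\le\norm{A}_F\le L\le 6L$ is immediate) separately and be somewhat careful for small $k$.
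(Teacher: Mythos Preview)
Your handling of the reducible case is correct and matches the paper. The gap is in the irreducible case: you propose to bound the eccentricity $\ecc_2(v)$ of the Barabanov norm by an absolute multiple of $L$ and conclude $\norm{A_k\cdots A_1}_2\le\ecc_2(v)$, a bound independent of $k$. But no such bound on $\ecc_2(v)$ exists. Take $\widetilde{\calM}_\varepsilon=\{A,\varepsilon E_{21}\}$ with $A=\bigl[\begin{smallmatrix}1&1\\0&1\end{smallmatrix}\bigr]$: this is irreducible for $\varepsilon>0$, has Frobenius norms $\le 2$, and by continuity $\rho_\varepsilon:=\rho(\widetilde{\calM}_\varepsilon)\to 1$ (in fact $\rho_\varepsilon>1$, else $A^k$ would contradict the existence of an extremal norm). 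Rescaling to $\calM_\varepsilon:=\rho_\varepsilon^{-1}\widetilde{\calM}_\varepsilon$ gives $\rho(\calM_\varepsilon)=1$ with $L$ uniformly bounded. The product $(\rho_\varepsilon^{-1}A)^k$ has spectral norm of order $k\rho_\varepsilon^{-k}$, which at $k\sim 1/(\rho_\varepsilon-1)$ is of order $1/(\rho_\varepsilon-1)\to\infty$; since any extremal norm $v_\varepsilon$ satisfies $v_\varepsilon\bigl((\rho_\varepsilon^{-1}A)^k\bigr)\le 1$, this forces $\ecc_2(v_\varepsilon)\to\infty$. Lemma~\ref{lem:distance-subspace} does not rescue this: it lower-bounds the Euclidean norm of boundary points of the extremal unit ball by $r_2/\sqrt5$, but says nothing about the ratio $r_1/r_2$.

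The paper uses the extremal norm differently. After choosing unitary coordinates so that $r_1e_1\in\partial\calB_w$ (where $w$ is the extremal norm in the new coordinates and $r_1=\max\{\abs{x_1}:x\in\calB_w\}$, $r_2=\max\{\abs{x_2}:x\in\calB_w\}$), extremality at $r_1e_1$ forces $\abs{a}\le 1$ for the $(1,1)$ entry of every $\tilde A\in U^*\calM U$, while the Frobenius bound gives $\abs{b}\le L$ for the $(1,2)$ entry. For $x\in\calB_w$ one then has $\abs{(\tilde Ax)_2}\le r_2$ (since $\tilde Ax\in\calB_w$) and $\abs{(\tilde Ax)_1}\le\abs{x_1}+Lr_2$; iterating, the first coordinate grows by at most $kLr_2$. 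Only now does Lemma~\ref{lem:distance-subspace} enter, giving $\norm{\tilde x}_2\ge r_2/\sqrt5$ for every $\tilde x\in\partial\calB_w$. In the quotient $\norm{\tilde A_k\cdots\tilde A_1\tilde x}_2/\norm{\tilde x}_2$ both the growth term $kLr_2$ and the denominator bound $r_2/\sqrt5$ scale with $r_2$, so $r_2$ cancels and one obtains the uniform bound $6kL$. The argument never needs --- and, as the example shows, cannot have --- a global comparison between $w$ and $\norm{\cdot}_2$.
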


\begin{proof}
    First we consider the case of reducible $\calM$, which for a change
    is easier here.
    In this case there is a unitary matrix $U$
    such that $U^* \calM U$ contains only upper triangular $2 \times 2$ matrices.
    Since the Frobenius norm is unitarily invariant,
    the entries of the matrices in $U^* \calM U$
    have absolute value at most $L$. Furthermore
    $\rho(U^* \calM U)=\rho(\calM)=1$, so all
    diagonal entries of $A \in U^*\calM U$ have absolute value
    bounded from above by $1$ and an upper right off-diagonal entry
    whose absolute value is bounded by $L$.
    Let $x \in \C^2$ and $B \in U^* \calM U$.
    By the previous considerations $\abs{(Bx)_2} \leq \abs{x_2}$ and 
    $\abs{(Bx)_1} \leq \abs{x_1}+ L \abs{x_2}$.
    By induction this shows for $B_i \in U^* \calM U$,
    $i=1,\ldots,k$ that
    \[\abs{(B_k \cdots B_1 x)_2} \leq \abs{x}_2\]
    and 
    \[
    \abs{(B_k \cdots B_1 x)_1} \leq \abs{x}_1+k L \abs{x}_2.
    \]
    This shows 
    \begin{align*}
    \norm{B_k \cdots B_1 x}_2 \leq (1+kL) \norm{x}_2 \leq 2kL \norm{x}_2.
    \end{align*}
    Since $\norm{\cdot}_2$ is unitarily invariant, we have for arbitrary $A_i \in \calM$,
    $i=1,\ldots,k$,
    \begin{align*}
    \norm{A_k \cdots A_1}_2 \leq 2kL.
    \end{align*}

    Now consider the irreducible case.
    Let $v$  be an extremal norm of $\calM$ with unit ball $\calB_v := \{x \in \C^2 \setsep v(x)\leq 1\}$.
    
    Let $\tilde{y} \in \calB_v$ be a vector 
    maximizing $\norm{\tilde{y}}_2$ and
    set $y = \tilde{y}/\norm{\tilde{y}}_2$.
    Let $z\in \C^2$ with $\norm{z}_2=1$ be
    perpendicular to $y$.
    Let $U= \begin{pmatrix}
        y & z
    \end{pmatrix}$ be the unitary matrix with columns
     $y$ and $z$ and define a norm by $w(x) := v(Ux)$, $x\in\C^2$.
    This is an extremal norm for
    $U^* \calM U$ with unit ball $\calB_w :=U^* \calB_v$.
    Denote by $e_1,e_2$ the standard basis of $\C^2$.
    Set 
    \begin{align*}
        r_1&:=\max \{\abs{x_1} \setsep x \in \calB_w\},\\
        r_2&:=\max \{\abs{x_2} \setsep x \in \calB_w\}.
    \end{align*}
    By definition $re_1=U^* \tilde{y}\in \calB_w$ for some $r>0$. As $U^* \tilde{y}$ maximizes the Euclidean norm in $\calB_w$ it follows that $r=r_1$ and so
    $r_1 e_1\in\partial\calB_w$. Furthermore,
    \[\calB_w \subseteq \{ x \in \C^2 \setsep \abs{x_1} \in [-r_1,r_1], \abs{x_2} \in [-r_2,r_2]\}.\]
    
    Consider $\tilde{A} = \begin{pmatrix} a & b \\ c &d\end{pmatrix} \in 
    U^* \calM U$. Since $r_1e_1 \in \partial\calB_w$, we have
    $w(r_1e_1) = 1 \geq w(r_1 \tilde{A}e_1)$, as $w$ is extremal. It follows that
    $\abs{(r_1\tilde{A}e_1)_1} \leq r_1$
    and so $\abs{a} \leq 1$.
    Since the Frobenius norm is unitarily invariant, we have $\abs{b}\leq L$. Let $x \in \C^2$ with
    $w(x)\leq 1$. Then
    $w(\tilde{A}x) \leq 1$
    implies
    \begin{align*}
        \abs{(\tilde{A}x)_2} &\leq r_2\\
        \abs{(\tilde{A}x)_1} &\leq \abs{x_1} + Lr_2
    \end{align*}
    By induction we obtain for an arbitrary sequence
    $\tilde{A}_1,\ldots,\tilde{A}_k$ in $U^*\calM U$ that
    \begin{align*}
        \abs{(\tilde{A}_k \cdots\tilde{A}_1 x)_2} &\leq r_2 \\
        \abs{(\tilde{A}_k \cdots \tilde{A}_1 x)_1} &\leq  \abs{x_1} + kLr_2.
    \end{align*}
    
     To calculate the norm of a product
    $\tilde{A}_k \cdots \tilde{A}_1$, $\tilde{A}_i \in U^*\calM U$, $i=1,\ldots,k$, we only
    have to consider the norm
    of points $\tilde{A}_k \cdots \tilde{A}_1 \tilde{x}$ with $\tilde{x}$ on the boundary of $\calB_w$.
    There is a point $\tilde{z}=(\eta,\zeta)$ in the boundary of $\calB_w$
    with $\abs{\zeta}=r_2$.
    
    Since $r_1e_1, \tilde{z}$ is a basis of $\C^2$, there are $\alpha,\beta \in [0,1]$, $\alpha+\beta=1$, $\gamma \in \C$ with $\abs{\gamma}=1$
    and $\delta \in \C$ such that 
    $\alpha r_1e_1 + \beta \gamma \tilde{z}  = \delta \tilde{x}$.
    Now $\tilde{x}$ is contained in the boundary of $\calB_w$
    and $\alpha r_1e_1+\beta \gamma \tilde{z}$ is contained
    in $\calB_w$, hence \[\abs{\delta}=\abs{\delta}w(\tilde{x})=w(\delta\tilde{x})\leq 1.\]
    Furthermore $\delta \tilde{x}$ is contained in the 
    affine span of $r_1 e_1$ and $\gamma \tilde{z}$, so by
    \Cref{lem:distance-subspace} we have
    $\norm{\tilde{x}}_2\geq \norm{\delta\tilde{x}}_2\geq \frac{1}{\sqrt{5}} r_2$.
    Therefore
    \begin{align*}
        \frac{\norm{\tilde{A}_k\cdots\tilde{A}_1 \tilde{x}}^2_2}{\norm{\tilde{x}}^2_2}
        &\leq  \frac{r_2^2+(\norm{\tilde{x}}_2+kLr_2)^2}{\norm{\tilde{x}}_2^2} \\
        &=1 + \frac{r_2^2(1+k^2L^2)}{\norm{\tilde{x}}_2^2}+ \frac{2kLr_2}{\norm{\tilde{x}}_2}\\
        &\leq 1+5(1+k^2L^2)+2\sqrt{5}kL\\
        &\leq (1+\sqrt{5}(kL+1))^2\\
        &\leq (6kL)^2
    \end{align*}
    for $\tilde{A}_i \in U^* \calM U$, $i=1,\ldots,k$.
    Again by the unitary invariance of  the Euclidean norm we also have
    \begin{align*}
        \norm{A_k \cdots A_1}_2 \leq 6kL
    \end{align*}
    for arbitrary $A_i \in \calM$, $i=1,\ldots,k$.
  \end{proof}

With the previous estimate at hand, we obtain the following result on Hölder continuity on $\calH_{>0}(2)$.
  \begin{cor}
    \label{cor:local-hoelder-dim-2}
    The joint spectral radius is locally
    $\frac{1}{6}$-Hölder-continuous
    on $\mathcal{H}_{>0}(2)$.
  \end{cor}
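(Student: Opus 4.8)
The plan is to deduce Corollary~\ref{cor:local-hoelder-dim-2} by feeding the two-dimensional trajectory estimate of Lemma~\ref{lem:polynomial-growth-bound-plane} into the general mechanism of Theorem~\ref{lem:independent-product-bound-implies-independent-lower-bound}. By Definition~\ref{def:Hölder}(iii), local $\frac{1}{6}$-Hölder continuity on $\calH_{>0}(2)$ means $\frac{1}{6}$-Hölder continuity on every compact subset, so it suffices to fix an arbitrary compact $\calK\subseteq\calH_{>0}(2)$ and verify the hypothesis of Theorem~\ref{lem:independent-product-bound-implies-independent-lower-bound} on $\calK$; note that $d^2+d=6$ when $d=2$.

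First I would record two uniform bounds on $\calK$. Since $\rho$ is continuous on $\calH(2)$ (\cite{Bara88}) and strictly positive on $\calK$, it attains a minimum $\rho_{\min}>0$ on $\calK$. Since $\calK$ is compact in the Hausdorff metric, $\bigcup_{\calM\in\calK}\calM$ is a bounded subset of $\C^{2\times2}$, so there is an $L\geq1$ such that every matrix occurring in some $\calM\in\calK$ has Frobenius norm at most $L$.

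Next, for an arbitrary $\calM\in\calK$ I would pass to the rescaled family $\calM':=\rho(\calM)^{-1}\calM$, which is compact, satisfies $\rho(\calM')=1$, and whose members have Frobenius norm at most $L/\rho(\calM)\leq L/\rho_{\min}$. Applying Lemma~\ref{lem:polynomial-growth-bound-plane} to $\calM'$ with $L':=\max\{1,L/\rho_{\min}\}$ gives $\norm{S'}_2\leq 6L'k$ for every $S'\in\calS_k(\calM')$ and every $k\geq1$; since each factor of a product in $\calS_k(\calM)$ equals $\rho(\calM)$ times a factor from $\calM'$, multiplying by $\rho(\calM)^k$ yields
\[
  \sup_{S\in\calS_k(\calM)}\norm{S}_2\leq 6L'\,k\,\rho(\calM)^k,\qquad k\geq1,
\]
with the constant $\Theta:=6L'$ \emph{independent} of $\calM\in\calK$. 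This is precisely the hypothesis of Theorem~\ref{lem:independent-product-bound-implies-independent-lower-bound} with $d=2$ and ambient norm $\norm{\cdot}=\norm{\cdot}_2$, so that theorem directly yields a constant $\Gamma>0$ with $\rho(\calN)\geq\rho(\calM)-\Gamma\,d_H(\calM,\calN)^{1/6}$ for all $\calM,\calN\in\calK$, and hence $\frac{1}{6}$-Hölder continuity of $\rho$ on $\calK$, as desired.

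I expect the only genuine subtlety to be the passage to $\calM'$: controlling $L'$ requires the uniform lower bound $\rho_{\min}>0$, which is exactly why the statement must be confined to $\calH_{>0}(2)$ rather than all of $\calH(2)$ — indeed, Example~\ref{example:L3isintrouble} shows that without such a lower bound the trajectory estimate with a uniform constant genuinely fails near sets of zero joint spectral radius. A minor bookkeeping point is the use of $\norm{\cdot}_2$ as the ambient norm in Theorem~\ref{lem:independent-product-bound-implies-independent-lower-bound}; this is harmless, since its proof routes through Theorem~\ref{thm:hoelder-lower-bound-allr}, whose ingredients (Lemma~\ref{lem:polynomial-growth-bound} and the spectral-norm Hausdorff distance) are compatible with this choice, and replacing the norm only rescales $\Theta$ by a dimensional constant. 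Everything else is a direct application of the cited results.
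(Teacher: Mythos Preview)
Your proof is correct and follows essentially the same route as the paper: normalize each $\calM\in\calK$ to have joint spectral radius~$1$, apply Lemma~\ref{lem:polynomial-growth-bound-plane} with the uniform Frobenius bound $L/\rho_{\min}$, and feed the resulting uniform trajectory estimate into Theorem~\ref{lem:independent-product-bound-implies-independent-lower-bound}. The paper compresses the rescaling into a single inequality chain and also cites Lemma~\ref{lem:independent-lower-bound-implies-locally-hoelder} explicitly, but that conclusion is already contained in Theorem~\ref{lem:independent-product-bound-implies-independent-lower-bound}, so there is no substantive difference.
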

  \begin{proof}
    Let $\mathcal{K} \subseteq \mathcal{H}_{>0}(2)$ be compact.
    There is a constant $L>0$ such that
    the Frobenius norm of all matrices in $\mathcal{K}$ is
    at most $L$. In particular, $\rho(\calM)\leq L$ for all $\calM\in \calK$.
    Also, by continuity of the joint spectral radius, $\min \{ \rho(\calM)\setsep \calM\in \calK\} >0$. 
    By
    \Cref{lem:polynomial-growth-bound-plane} we have for all $\calM\in\calK$ and $S\in\calS_k(\calM)$ that
    \begin{equation*}
        \norm{S}_2 \leq \frac{6L}{\rho(\calM)}\rho(\calM)^k k \leq \frac{6L}{\min \{ \rho(\calM)\setsep \calM\in \calK\}} \rho(\calM)^k k.
    \end{equation*}
    Combined with
    \Cref{lem:independent-product-bound-implies-independent-lower-bound}
    this shows the existence of a constant $\Gamma$ such that
    \begin{align*}
      \rho(\calN) \geq \rho(\calM) - \Gamma d_H(\calM,\calN)^{1/(2^2+2)}
    \end{align*}
    for all $\calM,\calN \in \mathcal{K}$.
    Together with
    \Cref{lem:independent-lower-bound-implies-locally-hoelder}
    this shows that the joint spectral radius is
    $\frac{1}{6}$-Hölder-continuous on $\mathcal{K}$.
  \end{proof}

\section{Continuous-Time}
\label{sec:continuous}

In this section we briefly discuss the consequences of our results for continuous-time linear inclusions and their exponential growth rates. Given a compact set $\calM\in \calH(d)$ the differential inclusion
\begin{equation}
    \label{eq:ldi}
    \dot x \in \{ Ax \setsep A \in \calM \}
\end{equation}
gives rise to a set of evolution operators of the time-varying differential equations
\begin{equation}
\label{eq:cttv}
    \dot x(t) = A(t) x(t),
\end{equation}
where $A(\cdot)$ is a measurable function on $[0,\infty)$ taking values in $\calM$. In fact, the solutions to the differential inclusion \eqref{eq:ldi}
are precisely the solutions to a linear equation of the type \eqref{eq:cttv} for a suitable $A(\cdot)$, see \cite{shorten2007stability} for details. Thus, to $\calM$ we can associate the set of time $t$ evolution operators
\begin{equation}
\begin{aligned}
    \calS_{t}(\calM,\R) := &\left\{ \Phi_{A(\cdot)}(t,0) \setsep \exists \ A(\cdot) \in L^\infty(\R_+,\calM) : \Phi_{A(\cdot)}(\cdot,\cdot) \text{ is the } \right. \\ &\left. \phantom{\ensuremath{\Phi_{A(\cdot)}(t,0) \setsep }} \text{ evolution operator for \eqref{eq:cttv}  }  \right\}.
\end{aligned}
\end{equation}

With this definition the joint spectral radius $\rho(\calM,\R)$ for the continuous time case may be defined as in \eqref{eq:jsr}, although it is more common to study the associated maximal Lyapunov exponent $\lambda(\calM) := \log (\rho(\calM,\R))$. It is easy to see that if we set $\hat{\calM}:= \calS_{1}(\calM,\R)$, then
\begin{equation}
    \rho(\calM,\R) = \rho(\hat{\calM},\N).
\end{equation}
The map $\calM \mapsto \cl \calS_{1}(\calM,\R)$ is locally Lipschitz
continuous from $\calH(d)$ to $\calH_{>0}(d)$. In particular, the
implicit time-discretisation of a continuous-time linear inclusion
always results in a discrete-time system with positive joint spectral
radius. This is essentially a consequence of the invertibility of
transition operators in the continuous-time case.  With these
observations we obtain the following assertions from the results of
the previous sections.

\begin{thm}
    \label{thm:localHoelder-continoustime}
The maximal Lyapunov exponent is pointwise Hölder continuous on $\calH(d)$ with exponent $\alpha=\frac{1}{d^2+2}$.   
\end{thm}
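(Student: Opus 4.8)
\medskip
The plan is to write $\lambda(\calM) = \log \rho(\widehat{\calM})$, where $\widehat{\calM} := \cl \calS_1(\calM,\R)$, and to use that $\Psi : \calM \mapsto \widehat{\calM}$ is locally Lipschitz from $\calH(d)$ into $\calH_{>0}(d)$. Fix $\calM$ and a neighbourhood of it on which $\Psi$ has Lipschitz constant $L$; since $\rho(\widehat{\calM}) = \rho(\calM,\R) > 0$ and $\rho\circ\Psi$ is continuous, we may shrink the neighbourhood so that $\rho(\widehat{\calN}) \geq c$ for some $c>0$ on it, so that $\log$ contributes only the Lipschitz factor $1/c$. It then suffices to show that $\rho$ is Hölder continuous at $\widehat{\calM}$ with exponent $\alpha = \tfrac{1}{d^2+2}$; pulling this back through $\Psi$ and $\log$ yields the claim for $\lambda$ at $\calM$ with the same exponent.

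For the estimate at $\widehat{\calM}$ I would treat the two directions separately. The upper bound $\rho(\widehat{\calN}) - \rho(\widehat{\calM}) \leq \rho\bigl(\widehat{\calM} + d_H(\widehat{\calM},\widehat{\calN})\calB_2\bigr) - \rho(\widehat{\calM}) \leq C\, d_H(\widehat{\calM},\widehat{\calN})^{1/d}$, with $\calB_2$ the spectral-norm unit ball, follows from \Cref{prop:hoel1} (reducibility index $\le d$), and $\tfrac1d \ge \tfrac1{d^2+2}$, so this direction is harmless. The lower bound is the crux. The robust route is to apply \Cref{thm:hoelder-lower-bound-allr} to $\widehat{\calM}$ with $r=1$ — legitimate since \Cref{prop:converging-lower-bound} holds for any bounded matrix set — which already gives $\rho(\widehat{\calN}) \ge \rho(\widehat{\calM}) - \Gamma\, d_H(\widehat{\calM},\widehat{\calN})^{1/(d^2+d)}$ and thus proves the theorem with the weaker constant $\tfrac1{d^2+d}$. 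To sharpen $d^2+d$ to $d^2+2$ I would re-run the proof of \Cref{thm:hoelder-lower-bound-allr} at the level of the continuous-time data: a near-optimal length-$k$ element of $\calS_k(\widehat{\calM}) = \calS_k(\calM,\R)$ is a transition operator $S = \Phi_{A(\cdot)}(k,0)$, and a matching element of $\calS_k(\calN,\R)$ is $T = \Phi_{A(\cdot)+\varepsilon B(\cdot)}(k,0)$ with $\norm{B(\cdot)}\le 1$. The variation-of-constants identity
\[
  T - S = \varepsilon \int_0^k \Phi_{A(\cdot)+\varepsilon B(\cdot)}(k,s)\, B(s)\, \Phi_{A(\cdot)}(s,0)\,\dd s,
\]
combined with the polynomial trajectory bound of \Cref{lem:polynomial-growth-bound} (available since $\rho(\calM,\R)>0$) and a Grönwall estimate for the perturbed flow (whose exponential rate exceeds $\rho(\calM,\R)$ by only $O(\varepsilon)$), should replace the crude discrete estimate $\norm{T-S} = O(\varepsilon k^{2d-1})$ of \Cref{lem:norm-bound-for-perturbed-product} by $\norm{T-S} = O(\varepsilon k^{d+1})$, since the perturbation now enters the single generator rather than being compounded over $k$ independent factors.

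Feeding $\norm{S} = O(k^{d-1})$ and $\norm{T-S} = O(\varepsilon k^{d+1})$ into Elsner's theorem (\Cref{thm:elsner}) gives $\rho(T) \ge \rho(S) - (2\norm{S}+\norm{T-S})^{(d-1)/d}\norm{T-S}^{1/d} \ge \rho(S) - C\varepsilon^{1/d} k^{(d^2-d+2)/d}$. Together with $\rho(S)^{1/k} \ge \rho(\calM,\R)(1-\Lambda_d/n)$ from \Cref{prop:converging-lower-bound} and \Cref{lem:two-sequences}, exactly as in the proof of \Cref{thm:hoelder-lower-bound-allr}, this yields: $d_H(\calM,\calN) \lesssim n^{-(d^2+2)}\rho(\calM,\R)$ implies $\rho(\calN,\R) \ge \rho(\calM,\R)(1-\Omega/n)$. \Cref{lem:Hoelder-via-integers} with $p=1$, $q=d^2+2$ then turns this into a lower Hölder bound with exponent $\tfrac1{d^2+2}$ at $\widehat{\calM}$; combining with the upper direction (as in \Cref{thm:pointwiseHoelder-v1}) and transporting through $\Psi$ and $\log$ completes the proof.

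The main obstacle is the improved perturbation estimate $\norm{T-S} = O(\varepsilon k^{d+1})$: one must integrate the variation-of-constants kernel against the polynomial bounds for both the unperturbed and the perturbed flow while keeping all constants locally uniform on compacta — which is precisely where the hypothesis $\rho(\calM,\R)>0$ helps, since it makes \Cref{lem:polynomial-growth-bound} applicable without running into the locally-uniform-constant difficulty of Conjecture L3. Should the sharp $k$-power prove more delicate than sketched, the composition argument still gives the theorem with the safe exponent $\tfrac1{d^2+d}$, and only the stated constant would change.
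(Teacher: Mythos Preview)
Your reduction --- write $\lambda(\calM)=\log\rho(\widehat{\calM})$ with $\widehat{\calM}=\cl\calS_1(\calM,\R)$, use that $\calM\mapsto\widehat{\calM}$ is locally Lipschitz from $\calH(d)$ into $\calH_{>0}(d)$, that $\log$ is Lipschitz on compacta of $(0,\infty)$, and then invoke the discrete pointwise Hölder result --- is exactly the paper's argument. The paper gives no proof beyond the sentence ``With these observations we obtain the following assertions from the results of the previous sections,'' and those results (\Cref{thm:pointwise-Hoelder-everywhere-v1}) deliver the exponent $\tfrac{1}{d^2+d}$. The stated exponent $\tfrac{1}{d^2+2}$ is not supported by anything in the paper and is almost certainly a misprint for $\tfrac{1}{d^2+d}$; note that for $d=2$ the two coincide, consistent with \Cref{thm:2dimHoelder-continoustime}.

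Your attempt to actually \emph{prove} the exponent $\tfrac{1}{d^2+2}$ is where the gap lies. The claimed bound $\norm{T-S}=O(\varepsilon k^{d+1})$ is false for $d\ge 3$. Take $A(\cdot)\equiv A$ a single nilpotent Jordan block of size $d$ (so $\rho(\calM,\R)=1$) and $B(\cdot)\equiv E_{d1}$ the matrix with a $1$ in position $(d,1)$. Then $e^{A(k-s)}E_{d1}e^{As}$ has $(1,d)$-entry $\tfrac{(k-s)^{d-1}}{(d-1)!}\cdot\tfrac{s^{d-1}}{(d-1)!}$, and
\[
\int_0^k (k-s)^{d-1}s^{d-1}\,\dd s = B(d,d)\,k^{2d-1},
\]
so the variation-of-constants integral genuinely has size $\varepsilon k^{2d-1}$, not $\varepsilon k^{d+1}$. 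The intuition that ``the perturbation enters the single generator rather than being compounded over $k$ factors'' does not buy anything here: the continuous-time variation-of-constants kernel $\Phi_A(k,s)B(s)\Phi_A(s,0)$ is bounded termwise by $\Theta(k-s)^{d-1}\cdot\Theta s^{d-1}$ via \Cref{lem:polynomial-growth-bound}, and the beta integral gives the same $k^{2d-1}$ power that appears in \Cref{lem:norm-bound-for-perturbed-product}. Your own fallback sentence is therefore the right conclusion: the safe exponent is $\tfrac{1}{d^2+d}$, and that is what the paper's argument actually establishes.
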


\begin{thm}
    \label{thm:2dimHoelder-continoustime}
The maximal Lyapunov exponent is locally $\frac{1}{6}$-Hölder continuous on $\calH(2)$.   
\end{thm}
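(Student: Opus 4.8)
The plan is to realize $\lambda$ as a composition of three maps whose regularity is already under control, and to track how the Hölder exponent propagates through the composition. Write $\Phi\colon \calH(2) \to \calH_{>0}(2)$ for the time-one evolution map $\calM \mapsto \cl \calS_1(\calM,\R)$. By the discussion preceding the theorem, $\Phi$ is well-defined, takes values in $\calH_{>0}(2)$, and is locally Lipschitz continuous; moreover $\rho(\calM,\R) = \rho(\Phi(\calM))$, so that $\lambda(\calM) = \log \rho(\Phi(\calM))$. Hence $\lambda = \log \circ\, \rho \circ \Phi$, where $\rho\colon \calH_{>0}(2)\to (0,\infty)$ is locally $\tfrac16$-Hölder by \Cref{cor:local-hoelder-dim-2} and $\log\colon (0,\infty)\to\R$ is locally Lipschitz.

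To make this precise, fix a compact set $\calK \subseteq \calH(2)$. Since $\Phi$ is continuous, $\Phi(\calK)$ is compact, and by construction $\Phi(\calK) \subseteq \calH_{>0}(2)$; by continuity and positivity of $\rho$ on $\calH_{>0}(2)$, the number $m := \min\{\rho(\calN) \setsep \calN \in \Phi(\calK)\}$ is strictly positive. Thus $\log$ is Lipschitz with constant $1/m$ on $[m,\infty) \supseteq \rho(\Phi(\calK))$; \Cref{cor:local-hoelder-dim-2} applied to the compact set $\Phi(\calK)$ gives a constant $C>0$ with $\abs{\rho(\calN_1)-\rho(\calN_2)} \leq C\, d_H(\calN_1,\calN_2)^{1/6}$ for all $\calN_1,\calN_2\in\Phi(\calK)$; and $\Phi$ has some Lipschitz constant $L$ on $\calK$.

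Chaining these estimates, for all $\calM,\calN \in \calK$,
\begin{align*}
  \abs{\lambda(\calM)-\lambda(\calN)}
  &= \abs{\log \rho(\Phi(\calM)) - \log \rho(\Phi(\calN))}
  \leq \tfrac1m \abs{\rho(\Phi(\calM))-\rho(\Phi(\calN))} \\
  &\leq \tfrac{C}{m}\, d_H(\Phi(\calM),\Phi(\calN))^{1/6}
  \leq \tfrac{C L^{1/6}}{m}\, d_H(\calM,\calN)^{1/6},
\end{align*}
so $\lambda$ is $\tfrac16$-Hölder continuous on $\calK$; as $\calK \subseteq \calH(2)$ was an arbitrary compact set, this is precisely local $\tfrac16$-Hölder continuity on $\calH(2)$. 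There is no substantive obstacle: the argument is just the observation that a locally Lipschitz map, followed by a locally $\tfrac16$-Hölder map, followed by a locally Lipschitz map, is again locally $\tfrac16$-Hölder, the identity $(Lt)^{1/6} = L^{1/6}t^{1/6}$ being what prevents the exponent from degrading. The only points needing a word of justification, both supplied by the remarks before the statement, are that $\Phi$ is locally \emph{Lipschitz} rather than merely continuous, and that it genuinely lands in $\calH_{>0}(2)$, so that \Cref{cor:local-hoelder-dim-2} applies and $m>0$.
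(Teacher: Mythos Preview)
Your proof is correct and follows exactly the approach the paper intends: the theorem is stated in the paper without a separate proof, being an immediate consequence of the preceding observations that $\calM\mapsto\cl\calS_1(\calM,\R)$ is locally Lipschitz into $\calH_{>0}(2)$ and that $\rho(\calM,\R)=\rho(\hat\calM,\N)$, combined with \Cref{cor:local-hoelder-dim-2}. You have simply written out the routine composition argument that the paper leaves to the reader.
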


\section{Conclusion}
\label{sec:conclusion}

We have investigated regularity properties of the joint spectral
radius as a map defined on the space of compact subsets of the set of
real or complex $d\times d$-matrices. In general, we obtain pointwise
Hölder continuity of order $\frac{1}{d^2+d}$. For finite matrix sets
this constant reduces to order $\frac{1}{d+\varepsilon}$. In general,
we conjecture that the joint spectral radius is even locally Hölder
continuous and in analogy to the ordinary spectral radius that the
order is $\frac{1}{d}$. This conjecture is still elusive. For the case
$d=2$ it is shown that local Hölder continuity of order $\frac{1}{6}$
holds. Unfortunately, the argument for $d=2$ does not lend itself to a
straightforward inductive argument by which a similar result holds in
higher dimensions. Indeed, we are able to prove that the particular
straightforward induction that we could think of cannot be
performed. For reasons of space, this will be discussed elsewhere. In
general, the investigation of the regularity of the joint spectral
radius continues to be an interesting topic.

\setcounter{biburllcpenalty}{100}
\setcounter{biburlucpenalty}{100}
\setcounter{biburlnumpenalty}{100}

\biburlnumskip=0mu plus 1mu\relax
\biburlucskip=0mu plus 1mu\relax
\biburllcskip=0mu plus 1mu\relax
\printbibliography

\appendix

 \section{Proof of \Cref{prop:specrad-lowerbound}}
 \label{sec:App}

 In this appendix we provide a proof for \Cref{prop:specrad-lowerbound}, where we follow the relevant parts of
 \cite[Chapter 2]{katoPerturbationTheoryLinear1995}.
 Similar arguments can be found in \cite[Section 3.7]{baumgartelAnalyticPerturbationTheory1985}.
 In the proof we use the Euclidean norm on $\C^d$ and the 
 induced spectral norm on $\C^{d \times d}$.
\begin{proof}[Proof of \Cref{prop:specrad-lowerbound}] 
  Denote the resolvent of $A$ by $R(\xi):=(A-\xi)^{-1}$
  and define $r(\xi):=\norm{R(\xi)}^{-1}$.
  Let $\lambda$ be an eigenvalue of $A$ with $\abs{\lambda} =
  \rho(A)$ and let $m$ be the multiplicity of $\lambda$.
  Let $\Xi$ be a circle of radius $\delta \in (0,1)$ around $\lambda$
  such that no other eigenvalue of $A$ is contained in the closed disk
  bounded by $\Xi$.
  Set \[r_0:= \min_{\xi \in \Xi} r(\xi).\]
  Set $\varepsilon_0 := \frac{r_0}{2}$ 
  and $\Gamma := \frac{2\delta}{r_0}$.

  Now let $B$ be a matrix with $\norm{B} \leq 1$ and $\varepsilon \in \C$.
  Set $A_\varepsilon:=(A+\varepsilon B)$.
  Denote the resolvent
  of $A_\varepsilon$ by $R(\xi,\varepsilon):=(A_\varepsilon-\xi)^{-1}$.

  Under the above perturbation, the eigenvalue $\lambda$ of $A$
  splits into a group of $m$ eigenvalues (counted
  with multiplicity) of $A_\varepsilon$. The average of these
  eigenvalues is denoted by $\hat{\lambda}(\varepsilon)$
  and satisfies (see \cite[p. 75, Formula (2.5)]{katoPerturbationTheoryLinear1995}
  \begin{align*}
    \hat{\lambda}(\varepsilon) = \frac{1}{m} \tr(A_\varepsilon P(\varepsilon))
  \end{align*}
  where
  $P(\varepsilon) = -\frac{1}{2\pi \iu} \int_\Xi R(\xi,\varepsilon) \,\dd
  \xi$ is the total projection of the $\lambda$-group (see
  \cite[p. 67, Formula (1.16)]{katoPerturbationTheoryLinear1995}.
  This projection can be writen as a power series
  \begin{align*}
    P(\varepsilon) = \sum_{n=0}^\infty \varepsilon^n P^{(n)}.
  \end{align*}
  We can also represent $\hat{\lambda}$ as a power series
  \begin{align*}
    \hat{\lambda}(\varepsilon) = \sum_{n=0}^\infty  \varepsilon^n\lambda^{(n)}
  \end{align*}
  and this power series converges whenever the power series $P(\varepsilon)$
  converges.
  
  We now follow \cite[Section 3.1 Simple
  Estimates]{katoPerturbationTheoryLinear1995}.
  A simple calculation shows
  $R(\xi,\varepsilon)=R(\xi)(I+\varepsilon B R(\xi))^{-1}$.
  For $\abs{\varepsilon}<r_0$ 
  we have $\norm{\varepsilon B R(\xi)}<1$
  for all $\xi \in \Xi$ by the definition of $r_0$.
  Because of this $R(\xi,\varepsilon)$ converges
  as a power series in $\varepsilon$ for all $\xi \in \Xi$
  and hence also the power series $P(\varepsilon)$ and
  $\hat{\lambda}(\varepsilon)$ converge.
  Therefore for $\abs{\varepsilon}<r_0$ the function
  $\hat{\lambda}(\varepsilon)-\lambda$ is holomorphic
  in $\varepsilon$. This function is
  also bounded by $\delta$,
  since otherwise there would
  be $\abs{\varepsilon^*}<r_0$  such that $A_{\varepsilon^*}$
  has an eigenvalue $\lambda^*$ on $\Xi$.
  But then the resolvent $R(\xi,\varepsilon)$ would have a pole in $(\lambda^*,\varepsilon^*)$
  contradicting the fact that $R(\xi,\varepsilon)$
  converges as a power series in $\varepsilon$ for every $\xi \in \Xi$
  as shown above.
  Hence $\abs{\lambda^{(n)}} \leq \frac{\delta}{r_0^{n}}$
  for all $n \in \N$ by Cauchy's inequality,
  see \cite[p. 88, Formula 3.5]{katoPerturbationTheoryLinear1995}.
  For $\abs{\varepsilon} \leq \frac{r_0}{2}$
  we finally
  have
  \begin{align*}
    \abs{\hat{\lambda}(\varepsilon)} &\geq \abs{\lambda}-\sum_{n=1}^\infty
    \abs{\varepsilon}^n \abs{\lambda^{(n)}} \\
    &\geq \abs{\lambda} 
  - \abs{\varepsilon}(\sum_{n=1}^\infty
      \abs{\varepsilon}^{n-1} \abs{\lambda^{(n)}})\\
     &\geq \abs{\lambda} - \abs{\varepsilon}(\sum_{n=1}^\infty (\frac{r_0}{2})^{n-1} \frac{\delta}{r_0^n}) \\
     &\geq \abs{\lambda} - \abs{\varepsilon} \frac{\delta}{r_0}
       \sum_{n=1}^\infty \frac{1}{2^{n-1}} \\
     &=\rho(A)-\abs{\varepsilon} \frac{2\delta}{r_0}.
  \end{align*}
  
  Since at least one of the eigenvalues of $A_\varepsilon$ in the $\lambda$-group
  has modulus at least $\tilde{\lambda}(\varepsilon)$, we finally have
  \begin{align*}
    \rho(A_\varepsilon) \geq \abs{\tilde{\lambda}(\varepsilon)} \geq \rho(A) - \Gamma \varepsilon
  \end{align*}
  for $\varepsilon \in (0,\varepsilon_0)$.
  This completes our proof.
\end{proof}
 
\end{document}